  \def\SvZfontcode{8}
  \def\SvZslantedGreekCapitals{1}
\def\SvZrequireslantedRedef{0}
\def\SvZrequireslantedRedef{1}
\def\SvZrequireslantedRedef{1}
\def\SvZrequireslantedRedef{1}
\DeclareMathAlphabet{\bm}{OT1}{ptm}{b}{it} 
\def\SvZrequireslantedRedef{1}
\def\SvZrequireslantedRedef{1}
\renewcommand{\Gamma}{\varGamma}
\renewcommand{\Delta}{\varDelta}
\renewcommand{\Theta}{\varTheta}
\renewcommand{\Lambda}{\varLambda}
\renewcommand{\Xi}{\varXi}
\renewcommand{\Pi}{\varPi}
\renewcommand{\Sigma}{\varSigma}
\renewcommand{\Upsilon}{\varUpsilon}
\renewcommand{\Phi}{\varPhi}
\renewcommand{\Psi}{\varPsi}
\renewcommand{\Omega}{\varOmega}
\renewcommand{\phi}{\varphi}
\newcommand{\mathds}{\mathbb}
\DeclareMathOperator{\rank}{rk}
\DeclareMathOperator{\rowspan}{rowspan}
\DeclareMathOperator{\colspan}{colspan}
\newcommand{\Q}{ \mathds{Q} }
\newcommand{\R}{ \mathds{R} }
\newcommand{\Z}{ \mathds{Z} }
\newcommand{\C}{ \mathds{C} }
\newcommand{\N}{\mathds{N}}
\newcommand{\field}{\mathds{F}}
\newcommand{\group}{G}
\DeclareMathOperator{\GF}{GF}
\newcommand{\ring}{R}
\newcommand{\parf}{\ensuremath{\mathbb{P}}} 
\newcommand{\pf}{\parf}
\newcommand{\dyadic}{\mathds{D}}
\newcommand{\psru}{\mathds{S}}
\newcommand{\sru}{\ensuremath{\sqrt[6]{1}}}
\newcommand{\uniform}{\mathds{U}}
\DeclareMathOperator{\fun}{\mathcal{F}}
\renewcommand{\bar}{\overline}
\newcommand{\ignore}[1]{}
\let\Oldsetminus\setminus
\renewcommand{\setminus}{\ensuremath{-}}
\newcommand{\delete}{\ensuremath{\!\Oldsetminus\!}}
\newcommand{\contract}{\ensuremath{\!/}}
\newcommand{\symdiff}{\triangle}
\newtheorem{theorem}{Theorem}[section]
\newtheorem{theorem}{Theorem}
\newtheorem{lemma}[theorem]{Lemma}
\newtheorem{proposition}[theorem]{Proposition}
\newtheorem{corollary}[theorem]{Corollary}
\newtheorem{claim}{Claim}[theorem] 
\newtheorem{conjecture}[theorem]{Conjecture}
\theoremstyle{definition}
\newtheorem{definition}[theorem]{Definition}
\newtheorem{example}[theorem]{Example}
\newtheorem{problem}[theorem]{Problem}
\newenvironment{claimenv}{\list{}{\rightmargin0pt\leftmargin10pt\topsep0pt}\item[]}{\endlist}
\newenvironment{subproof}{\begin{claimenv}\begin{proof}}{\end{proof}\end{claimenv}}
\newtheorem{question}[theorem]{Question}
	\DeclareMathOperator{\grass}{Gr}
	\DeclareMathOperator{\matring}{M}
	\DeclareMathOperator{\GL}{GL}
	\newcommand{\mm}[4]{\ensuremath{\left[\negthinspace\begin{smallmatrix} #1 & #2\\#3 & #4\end{smallmatrix}\negthinspace\right]}}
	\newcommand{\mI}{\mm{1}{0}{0}{1}}
	\newcommand{\mO}{\mm{0}{0}{0}{0}}
	\newcommand{\QU}{\ensuremath{\textit{QU}}}
	\newcommand{\quat}{\ensuremath{\mathbb{H}}}
\begin{document}
	
\title{Representing some non-representable matroids}
\author{R.A. Pendavingh}
\address{Technische Universiteit Eindhoven, Postbus 512, 5600MB Eindhoven, The Netherlands.}
\email{rudi@win.tue.nl}
\author{S.H.M. van Zwam}
\address{Centrum Wiskunde \& Informatica, Postbus 94079, 1090 GB Amsterdam, The Netherlands.}
\email{Stefan.van.Zwam@cwi.nl}
\thanks{The research for this paper was supported by the Netherlands Organisation for Scientific Research (NWO). Parts of this paper have appeared in the second author's PhD thesis \cite{vZ09}.}

\begin{abstract}

	We extend the notion of representation of a matroid to algebraic structures that we call skew partial fields. Our definition of such representations extends Tutte's definition, using chain groups. We show how such representations behave under duality and minors, we extend Tutte's representability criterion to this new class, and we study the generator matrices of the chain groups. An example shows that the class of matroids representable over a skew partial field properly contains the class of matroids representable over a skew field.
	
	Next, we show that every multilinear representation of a matroid can be seen as a representation over a skew partial field.
	
	Finally we study a class of matroids called quaternionic unimodular. We prove a generalization of the Matrix Tree theorem for this class.
\end{abstract}
\maketitle

\section{Introduction}

A matrix with entries in $\R$ is \emph{totally unimodular} if the determinant of each square submatrix is in $\{-1,0,1\}$. A matroid is \emph{regular} if it can be represented by a totally unimodular matrix. Regular matroids are well-studied objects with many attractive properties. For instance, a binary matroid is either regular, and therefore representable over \emph{every} field, or it is representable only over fields of characteristic 2.

Whittle proved a similar, but more complicated, classification of the representability of ternary matroids \cite{Whi95,Whi97}. His deep theorem is based on the study of representation matrices with structure similar to that of the totally unimodular matrices: the determinants of all square submatrices are constrained to be in some subset of elements of a field. Similar, but more restricted, objects were studied by Lee \cite{Lee90}. In 1996, Semple and Whittle \cite{SW96} introduced the notion of a \emph{partial field} as a common framework for the algebraic structures encountered in Whittle's classification. Since then, partial fields have appeared in a number of papers, including \cite{Whi96b,Sem97,OVW98,Lee99,LS99,OSV00,PZ08lift,PZ08conf,HMZ11,MWZ09,MWZ09b}. In Section \ref{sec:brief} we give a short introduction to the theory of partial fields.

The main objective of this paper is to present an alternative development of the theory of matroid representation over partial fields, based on Tutte's theory of chain groups \cite{Tut65}. This approach has several advantages over the treatments of partial fields in \cite{SW96,PZ08conf}, the most notable being that we do not require the concept of a determinant, and thus open the way to non-commutative algebra. We devote Section \ref{sec:chaingr} to the development of the theory of what we named \emph{skew partial fields}. We note that Vertigan \cite{Ver04} also studied matroid-like objects represented by modules over rings, but contrary to his results, our constructions will still have matroids as the underlying combinatorial objects.

The resulting matroid representations over skew partial fields properly generalize representations over skew fields. In Subsection \ref{ssec:examples} we give an example of a matroid representable over a skew partial field but not over any skew field.

In coding theory the topic of multilinear representations of matroids has received some attention \cite{SA98}. Br\"and\'en has also used such representations to disprove a conjecture by Helton and Vinnikov \cite{Bra11}. In Section \ref{sec:multilin} we show that there is a correspondence between multilinear representations over a field $\field$ and representations over a skew partial field whose elements are invertible $n\times n$ matrices over $\field$. 

Finally, an intriguing skew partial field is the \emph{quaternionic unimodular} skew partial field, a generalization of the sixth-roots-of-unity and regular partial fields. David G. Wagner (personal communication) suggested that a specialized version of the Cauchy-Binet formular should hold for quaternionic matrices. In Section \ref{sec:quat} we give a proof of his conjecture. As a consequence it is possible to count the bases of these matroids.

We conclude with a number of open problems.

\section{A crash course in commutative partial fields}\label{sec:brief}
We give a brief overview of the existing theory of partial fields, for the benefit of readers with no prior experience. First we introduce some convenient notation. If $X$ and $Y$ are ordered sets, then an $X\times Y$ matrix $A$ is a matrix whose rows are indexed by $X$ and whose columns are indexed by $Y$. If $X' \subseteq X$ and $Y' \subseteq Y$ then $A[X',Y']$ is the submatrix induced by rows $X'$ and columns $Y'$. Also, for $Z \subseteq X\cup Y$, $A[Z] := A[X\cap Z, Y\cap Z]$. The entry in row $i$ and column $j$ is either denoted $A[i,j]$ or $A_{ij}$.

\begin{definition}\label{def:pf}
	A \emph{partial field} is a pair $\parf = (\ring, \group)$ of a commutative ring $\ring$ and a subgroup $\group$ of the group of units of $\ring$, such that $-1 \in \group$. 
\end{definition} 

We say $p$ is an \emph{element} of $\parf$, and write $p\in \parf$, if $p \in G\cup \{0\}$. As an example, consider the \emph{dyadic} partial field $\dyadic := (\Z[\frac{1}{2}], \langle -1, 2 \rangle)$, where $\langle S \rangle$ denotes the multiplicative group generated by the set $S$. The nonzero elements of $\dyadic$ are of the form $\pm 2^z$ with $z\in \Z$.

\begin{definition}\label{def:pmat}
	Let $\parf = (\ring,\group)$ be a partial field, and let $A$ be a matrix over $R$ having $r$ rows. Then $A$ is a \emph{weak $\parf$-matrix} if, for each $r\times r$ submatrix $D$ of $A$, we have $\det(D) \in \group\cup\{0\}$. Moreover, $A$ is a \emph{strong $\parf$-matrix} if, for \emph{every} square submatrix $D$ of $A$, we have $\det(D) \in \group \cup \{0\}$.
\end{definition}

As an example, a totally unimodular matrix is a strong $\uniform_0$-matrix, where $\uniform_0$ is the \emph{regular} partial field $(\Z, \{-1,1\})$. When we use ``$\parf$-matrix'' without adjective, we assume it is strong.

\begin{proposition}\label{prop:pfmatroid}
	Let $\parf$ be a partial field, and $A$ an $X\times E$ weak $\parf$-matrix. Let $r := |X|$. If $\det(D) \neq 0$ for some square $r\times r$ submatrix of $A$, then the set
  \begin{align}
    \mathcal{B}_A := \{ B \subseteq E : |B| = r, \det(A[X,B]) \neq 0 \}
  \end{align}
  is the set of bases of a matroid on $E$.
\end{proposition}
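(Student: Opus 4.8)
The plan is to reduce the statement to the classical fact that a matrix over a \emph{field} defines a matroid, namely its column matroid. The hypothesis that $\det(D)\neq 0$ for some $r\times r$ submatrix $D$ of $A$ forces $R\neq 0$, so $R$ has a maximal ideal; let $\field$ be the corresponding residue field and $\phi\colon R\to\field$ the quotient homomorphism, applied entrywise to matrices.

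Since $\phi$ is a ring homomorphism, it commutes with the determinant: $\phi(\det(D))=\det(\phi(D))$ for every square submatrix $D$ of $A$, because $\det$ is a polynomial with integer coefficients in the entries. The crucial observation---this is exactly where the weak $\parf$-matrix hypothesis is used---is that for an $r\times r$ submatrix $D=A[X,B]$ we have $\det(D)\in\group\cup\{0\}$, so $\det(D)$ is either $0$ or a \emph{unit} of $R$. In the first case $\det(\phi(D))=\phi(0)=0$; in the second $\phi(\det(D))$ is a unit of $\field$, hence nonzero, so $\det(\phi(D))\neq 0$. Therefore $\det(A[X,B])\neq 0$ if and only if $\det(\phi(A)[X,B])\neq 0$, which gives $\mathcal{B}_A=\mathcal{B}_{\phi(A)}$. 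By hypothesis the submatrix $D$ witnesses that this set is nonempty, so the $X\times E$ matrix $\phi(A)$ has rank $|X|=r$ over the field $\field$.

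It then remains to invoke the well-known fact that for an $X\times E$ matrix $A'$ over a field with $|X|=r$ and $\rank(A')=r$, the family of $r$-subsets $B\subseteq E$ with $\det(A'[X,B])\neq 0$---equivalently, the $r$-subsets of columns of $A'$ that are linearly independent---is the set of bases of a matroid on $E$, namely the column matroid of $A'$. Applying this with $A'=\phi(A)$ and using $\mathcal{B}_A=\mathcal{B}_{\phi(A)}$ completes the argument.

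The one point to be careful about---and the step I would expect a careful referee to probe---is precisely "nonzero $\det$ stays nonzero under $\phi$": for an arbitrary matrix over a commutative ring this can fail, since a nonzero non-unit may lie in the kernel of $\phi$. It is the weak $\parf$-matrix condition, that every maximal-order determinant is zero or a unit, together with the fact that units survive every ring homomorphism, that makes the reduction go through. (One could instead argue directly, without passing to $\field$: pivoting $A\mapsto A[X,B]^{-1}A$ for a fixed $B\in\mathcal{B}_A$ is legitimate because $\det(A[X,B])$ is a unit, so the adjugate formula keeps all entries in $R$; one may thus assume $A[X,B]=I$ and then verify the basis-exchange axiom via Cramer's rule exactly as over a field. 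The reduction to $\field$ is shorter, so I would present that.)
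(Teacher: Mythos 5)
Your proposal is correct and follows essentially the same route as the paper: pass to the residue field $R/I$ of a maximal ideal, use that the quotient homomorphism commutes with determinants, and conclude that $\mathcal{B}_A$ coincides with the bases of the column matroid of $\phi(A)$. You spell out more explicitly than the paper does the key point that the weak $\parf$-matrix condition makes each maximal determinant zero or a unit, so nonvanishing survives the homomorphism; this is exactly the reasoning the paper's terse proof relies on.
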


\begin{proof}
  Let $I$ be a maximal ideal of $\ring$, so $\ring/I$ is a field. A basic result from commutative ring theory ensures that $I$ exists. Let $\phi:R\rightarrow R/I$ be the canonical ring homomorphism. Since $\phi(\det(D)) = \det(\phi(D))$ for any matrix $D$ over $R$, the the usual linear matroid of $\phi(A)$ has the same set of bases as $\mathcal{B}_A$.
\end{proof}

We denote the matroid from the theorem by $M[A]$.

\begin{definition}\label{def:representable}
	Let $M$ be a matroid. If there exists a weak $\parf$-matrix $A$ such that $M = M[A]$, then we say that $M$ is \emph{representable over $\parf$.}
\end{definition} 

The proof of the proposition illustrates an attractive feature of partial fields: homomorphisms preserve the matroid. This prompts the following definition and proposition:

\begin{definition}\label{def:hom}
	Let $\parf_1 = (R_1,G_1)$ and $\parf_2 = (R_2,G_2)$ be partial fields, and let $\phi:R_1\rightarrow R_2$ be a function. Then $\phi$ is a \emph{partial-field homomorphism} if $\phi$ is a ring homomorphism with $\phi(G_1)\subseteq G_2$.
\end{definition}

\begin{proposition}\label{pro:pfhom}
	Let $\parf_1$ and $\parf_2$ be partial fields, and $\phi:\parf_1\rightarrow\parf_2$ a partial-field homomorphism. If a matroid $M$ is representable over $\parf_1$ then $M$ is representable over $\parf_2$.
\end{proposition}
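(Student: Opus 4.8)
The plan is to transport a weak $\parf_1$-representation of $M$ across $\phi$ and check that nothing breaks. By Definition \ref{def:representable} there is an $X \times E$ weak $\parf_1$-matrix $A$ with $M = M[A]$; write $r := |X|$. Let $\phi(A)$ be the $X \times E$ matrix over $R_2$ obtained by applying $\phi$ entrywise. The crucial algebraic fact is that a ring homomorphism commutes with the Leibniz determinant formula, so $\det(\phi(D)) = \phi(\det(D))$ for every square submatrix $D$ of $A$; this is exactly the identity already used in the proof of Proposition \ref{prop:pfmatroid}.

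First I would show $\phi(A)$ is a weak $\parf_2$-matrix: for an $r \times r$ submatrix $D$ we have $\det(D) \in G_1 \cup \{0\}$ by hypothesis, hence $\det(\phi(D)) = \phi(\det(D)) \in \phi(G_1) \cup \{0\} \subseteq G_2 \cup \{0\}$, using $\phi(G_1) \subseteq G_2$ from Definition \ref{def:hom}. Since $M[A]$ is a matroid, some $A[X,B_0]$ with $|B_0| = r$ has nonzero determinant, and then $\det(A[X,B_0]) \in G_1$ (a weak $\parf_1$-matrix has no nonzero subdeterminant outside $G_1$); therefore $\det(\phi(A)[X,B_0]) = \phi(\det(A[X,B_0])) \in G_2$, which is nonzero as $0 \notin G_2$. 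Hence Proposition \ref{prop:pfmatroid} applies to $\phi(A)$ and $M[\phi(A)]$ is a matroid on $E$.

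It remains to check $M[\phi(A)] = M[A]$, which I would do by comparing bases. Fix $B \subseteq E$ with $|B| = r$. If $\det(A[X,B]) \neq 0$ then $\det(A[X,B]) \in G_1$, so $\det(\phi(A)[X,B]) = \phi(\det(A[X,B])) \in G_2$ is nonzero; and if $\det(A[X,B]) = 0$ then $\det(\phi(A)[X,B]) = \phi(0) = 0$. Thus $B$ is a basis of $M[\phi(A)]$ exactly when it is a basis of $M[A] = M$, so $M$ is representable over $\parf_2$. There is no genuinely hard step here --- it is a diagram chase resting on the compatibility of determinants with ring homomorphisms --- and the only thing to watch is the bookkeeping that keeps $M[\phi(A)]$ well defined, namely that for a weak $\parf$-matrix ``nonzero subdeterminant'' and ``subdeterminant in the group'' coincide, which is what makes the two base families equal rather than merely nested.
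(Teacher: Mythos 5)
Your proof is correct and is exactly the argument the paper intends: the paper leaves this proposition unproved precisely because it follows the same homomorphism-and-determinant reasoning as the proof of Proposition \ref{prop:pfmatroid}, namely that $\phi(\det(D))=\det(\phi(D))$ and that nonzero $r\times r$ subdeterminants of a weak $\parf_1$-matrix lie in $G_1$, hence map into $G_2\subseteq R_2^*$ and stay nonzero. Your explicit check that the two base families coincide (rather than one merely containing the other) is the right bookkeeping and matches the paper's approach.
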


As an example we prove a result by Whittle. The \emph{dyadic} partial field is $\dyadic = (\Z[\frac{1}{2}], \langle -1, 2 \rangle)$. 

\begin{lemma}[Whittle \cite{Whi97}]\label{lem:dyadichom}
	Let $M$ be a matroid representable over the dyadic partial field. Then $M$ is representable over $\Q$ and over every finite field of odd characteristic.
\end{lemma}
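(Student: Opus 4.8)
The plan is to obtain both representations as immediate consequences of Proposition~\ref{pro:pfhom}, by writing down suitable partial-field homomorphisms out of the dyadic partial field. The first thing to observe is that any field $\field$ may be regarded as a partial field $(\field,\field^\times)$, and with this convention representability over $(\field,\field^\times)$ in the sense of Definition~\ref{def:representable} coincides with the classical notion of $\field$-representability: every nonzero element of a field is a unit, so the determinant condition in Definition~\ref{def:pmat} is automatically satisfied. Hence it is enough to construct partial-field homomorphisms $\dyadic \to (\Q,\Q^\times)$ and $\dyadic \to (\GF(q),\GF(q)^\times)$ for every odd prime power $q$, and then invoke Proposition~\ref{pro:pfhom}.

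For the rational case one simply takes the inclusion $\iota\colon \Z[\tfrac12]\hookrightarrow\Q$. This is a ring homomorphism and $\iota(\langle -1,2\rangle)\subseteq\Q^\times$, so $\iota$ is a partial-field homomorphism; Proposition~\ref{pro:pfhom} then gives that $M$ is representable over $\Q$.

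For the finite-field case, fix $q=p^k$ with $p$ odd. The unique ring homomorphism $\Z\to\GF(q)$ sends $2$ to a nonzero, hence invertible, element of $\GF(q)$, precisely because $p\neq 2$. By the universal property of localization it therefore extends to a ring homomorphism $\phi\colon\Z[\tfrac12]\to\GF(q)$ with $\phi(\tfrac12)=\phi(2)^{-1}$. Both generators of $\langle -1,2\rangle$ map to units of $\GF(q)$, so $\phi(\langle -1,2\rangle)\subseteq\GF(q)^\times$ and $\phi$ is a partial-field homomorphism; applying Proposition~\ref{pro:pfhom} again yields that $M$ is representable over $\GF(q)$.

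The entire content of the argument lies in the existence of these homomorphisms, and the only point at which the hypothesis is used is the requirement that $2$ not be sent to $0$ — which is exactly why the odd-characteristic assumption appears, and why the conclusion must fail in characteristic $2$. So there is no real obstacle here: the lemma is a direct illustration of the homomorphism principle recorded in Proposition~\ref{pro:pfhom}, and the proof is essentially the two one-line verifications above.
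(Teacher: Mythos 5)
Your proof is correct and follows essentially the same route as the paper: both arguments construct partial-field homomorphisms from $\dyadic$ to $\Q$ (via the inclusion of $\Z[\frac12]$) and to a finite field of odd characteristic (sending $\frac12$ to the inverse of $2$, which exists since $p\neq 2$), and then invoke Proposition~\ref{pro:pfhom}. The only difference is cosmetic: the paper writes the image of $\frac12$ explicitly as a power of $2$ modulo $p$, whereas you justify the extension of $\Z\to\GF(q)$ to $\Z[\frac12]$ by the universal property of localization.
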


\begin{proof}
	Since $\Z[\frac{1}{2}]$ is a subring of $\Q$, finding a homomorphism $\phi:\dyadic\rightarrow\Q$ is trivial. Now
	let $\field$ be a finite field of characteristic $p \neq 2$. Let $\phi:\Z[\frac{1}{2}]\rightarrow\field$ be the ring homomorphism determined by $\phi(x) = x \mod p$ and $\phi(\frac{1}{2}) = 2^{p-1} \mod p$. 
	
	The result now follows directly from Proposition \ref{pro:pfhom}.	
\end{proof}
Whittle went further: he proved that the converse is also true. The proof of that result is beyond the scope of this paper. The proof can be viewed as a far-reaching generalization of Gerards' proof of the excluded minors for regular matroids \cite{Ger89}.
We refer the reader to \cite{PZ08conf} for more on the theory of partial fields.

\section{Chain groups}\label{sec:chaingr}
From now on rings are allowed to be noncommutative. We will always assume that the ring has a (two-sided) identity element, denoted by $1$.

\begin{definition}
  A \emph{skew partial field} is a pair $(\ring, \group)$, where $\ring$ is a ring, and $\group$ is a subgroup of the group of units $\ring^*$ of $\ring$, such that $-1 \in \group$.
\end{definition}

While several attempts have been made to extend the notion of determinant to noncommutative fields in the context of matroid representation \cite{CSS09,GGRW05}, we will not take that route. Instead, we will bypass determinants altogether, by revisiting the pioneering matroid representation work by Tutte~\cite{Tut65}. He defines representations by means of a \emph{chain group}. 
We generalize his definitions from skew fields to skew partial fields.

\begin{definition}
  Let $\ring$ be a ring, and $E$ a finite set. An $\ring$-chain group on $E$ is a subset $C \subseteq \ring^E$ such that, for all $f,g \in C$ and $r \in \ring$,
  \begin{enumerate}
	  \item $0 \in C$,
    \item $f+g \in C$, and
    \item $rf \in C$.
  \end{enumerate}
\end{definition}

The elements of $C$ are called \emph{chains}. In this definition, addition and (left) multiplication with an element of $\ring$ are defined componentwise, and $0$ denotes the chain $c$ with $c_e = 0$ for all $e\in E$. Note that, if $E = \emptyset$, then $R^E$ consists of one element, $0$. Using more modern terminology, a chain group is a submodule of a free left $\ring$-module. Chain groups generalize linear subspaces. For our purposes, a chain is best thought of as a row vector. 

The \emph{support} or \emph{domain} of a chain $c \in C$ is
\begin{align}
  \|c\| := \{e \in E  :  c_e \neq 0 \}.
\end{align}

\begin{definition}
  A chain $c\in C$ is \emph{elementary} if $c \neq 0$ and there is no $c' \in C\setminus \{0\}$ with $\|c'\| \subsetneq \|c\|$.
\end{definition}

The following definition was inspired by Tutte's treatment of the regular chain group \cite[Section 1.2]{Tut65}.
\begin{definition}
  Let $G$ be a subgroup of $R^*$. A chain $c \in C$ is \emph{$G$-primitive} if $c \in (G\cup \{0\})^E$.
\end{definition}

We may occasionally abbreviate ``$G$-primitive'' to ``primitive''. Now we are ready for our main definition.

\begin{definition}
  Let $\parf = (\ring, \group)$ be a skew partial field, and $E$ a finite set. A \emph{$\parf$-chain group} on $E$ is an $\ring$-chain group $C$ on $E$ such that every elementary chain $c \in C$ can be written as
  \begin{align}
    c = r c'
  \end{align}
  for some $G$-primitive chain $c' \in C$ and $r \in R$.
\end{definition}

Primitive elementary chains are unique up to scaling:

\begin{lemma}
  Suppose $c, c'$ are $G$-primitive elementary chains such that $\|c\| = \|c'\|$. Then $c = g c'$ for some $g\in G$.
\end{lemma}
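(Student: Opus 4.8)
The plan is to cancel one coordinate of $c$ against $c'$ and then invoke the minimality that is built into the definition of an elementary chain.

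First I would use that $c$, being elementary, is nonzero, so the common support $S := \|c\| = \|c'\|$ is nonempty; fix any $e \in S$. Since $c$ and $c'$ are $G$-primitive we have $c_e, c'_e \in G$, and because $G$ is a subgroup of $\ring^*$ the element $g := c_e (c'_e)^{-1}$ again lies in $G$. The one spot where the noncommutativity of $\ring$ must be respected is the side on which $g$ acts: $g$ must go on the \emph{left}, since then $g c'_e = c_e (c'_e)^{-1} c'_e = c_e$, which matches the way left scalar multiplication acts on chains.

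Next I would form $d := c - g c'$. By the chain group axioms — closure under left multiplication by ring elements and under addition — we get $d = c + (-g) c' \in C$. By the choice of $g$ the $e$-coordinate of $d$ vanishes, while every other nonzero coordinate of $d$ is among those of $c$ or $c'$; hence $\|d\| \subseteq S \setminus \{e\} \subsetneq S = \|c\|$. Since $c$ is elementary, no nonzero chain of $C$ has support properly contained in $\|c\|$, so $d = 0$, i.e. $c = g c'$ with $g \in G$, which is exactly the assertion.

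I do not anticipate a genuine obstacle: the statement is a direct consequence of the definitions, and the only subtlety is the left/right bookkeeping in a noncommutative ring. It is worth noting that neither the full $\parf$-chain group structure nor the $G$-primitivity of $d$ is used here — it suffices that $d$ is a chain with support strictly smaller than that of the elementary chain $c$.
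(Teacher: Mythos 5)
Your proof is correct and follows essentially the same route as the paper: the paper forms $c'' := (c_e)^{-1}c - (c'_e)^{-1}c'$ and uses elementarity of $c$ to force $c''=0$, which is your argument up to a left multiplication by $c_e$ (your $d$ equals $c_e c''$). The left/right bookkeeping you highlight is handled the same way in the paper, so nothing further is needed.
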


\begin{proof}
  Pick $e \in \|c\|$, and define $c'' := (c_e)^{-1} c - (c'_e)^{-1} c'$. Then $\|c''\| \subsetneq \|c\|$. Since $c$ is elementary, $c'' = 0$. Hence $c' = c'_e(c_e)^{-1} c$.
\end{proof}

Chain groups can be used to represent matroids, as follows:

\begin{theorem}\label{thm:chainmatroid}
  Let $\parf = (\ring,\group)$ be a skew partial field, and let $C$ be a $\parf$-chain group on $E$. Then 
  \begin{align}
    \mathcal{C}^* := \{ \|c\|  :  c \in C, \textrm{ elementary}\}.
  \end{align}
  is the set of cocircuits of a matroid on $E$.
\end{theorem}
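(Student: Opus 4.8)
The plan is to verify that $\mathcal{C}^*$ obeys the circuit axioms for a matroid on $E$; since a family satisfying those axioms is the circuit set of a unique matroid, and the cocircuits of a matroid are by definition the circuits of its dual, this exhibits $\mathcal{C}^*$ as the cocircuit set of the dual of the matroid so obtained. Two of the three axioms come straight from the definition of an elementary chain. An elementary chain is nonzero, so its support is nonempty, which gives $\emptyset \notin \mathcal{C}^*$. If $c_1, c_2 \in C$ are elementary with $\|c_1\| \subseteq \|c_2\|$, then since $c_1 \in C \setminus \{0\}$ and $c_2$ is elementary we cannot have $\|c_1\| \subsetneq \|c_2\|$, so $\|c_1\| = \|c_2\|$; hence no member of $\mathcal{C}^*$ properly contains another.

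It remains to verify circuit elimination, and for this I would first record the routine observation that every nonzero $c \in C$ satisfies $\|c_0\| \subseteq \|c\|$ for some elementary $c_0 \in C$: the finite nonempty family $\{ \|c'\| : c' \in C \setminus \{0\},\ \|c'\| \subseteq \|c\| \}$ has an inclusion-minimal member $S$, and any $c_0 \in C \setminus \{0\}$ with $\|c_0\| = S$ admits no nonzero chain of strictly smaller support, i.e.\ is elementary. Now let $c_1, c_2 \in C$ be elementary with $\|c_1\| \neq \|c_2\|$ and fix $e \in \|c_1\| \cap \|c_2\|$. Here the hypothesis that $C$ is a $\parf$-chain group (not merely an $\ring$-chain group) enters: write $c_i = r_i c_i'$ with $c_i' \in C$ a $G$-primitive chain and $r_i \in \ring$; since $c_i \neq 0$ we have $r_i \neq 0$, and since every nonzero entry of $c_i'$ lies in $G$ and is therefore a unit, $\|c_i'\| = \|c_i\|$, so in particular $(c_1')_e$ and $(c_2')_e$ are invertible. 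Put
\[
  c := (c_1')_e^{-1} c_1' - (c_2')_e^{-1} c_2'.
\]
Then $c \in C$ by closure under left multiplication and addition, the $e$-coordinate of $c$ is $1 - 1 = 0$, and $c \neq 0$, since $c = 0$ would force $(c_1')_e^{-1} c_1' = (c_2')_e^{-1} c_2'$ and hence $\|c_1\| = \|c_1'\| = \|c_2'\| = \|c_2\|$. Applying the observation above to $c$ yields an elementary $c_3 \in C$ with $\|c_3\| \subseteq \|c\| \subseteq (\|c_1\| \cup \|c_2\|) \setminus e$, which is the required cocircuit inside $(\|c_1\| \cup \|c_2\|) \setminus e$.

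The main obstacle is this elimination step, and the two delicate points there are: that one may rescale $c_1'$ and $c_2'$ so that their $e$-coordinates cancel — this is precisely where $G$-primitivity, i.e.\ invertibility of the nonzero entries, is used, and is the one place a general $\ring$-chain group would not do — and that the resulting difference does not collapse to the zero chain. Everything else is bookkeeping with supports.
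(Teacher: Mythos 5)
Your proof is correct and follows essentially the same route as the paper: check that $\emptyset\notin\mathcal{C}^*$ and that members are incomparable, then establish weak elimination by rescaling $G$-primitive representatives so the $e$-coordinates cancel (this is exactly where the $\parf$-chain group hypothesis is used, via invertibility of entries in $\group$) and extracting an elementary chain from the support of the difference. You spell out a few details the paper leaves implicit (existence of an elementary chain inside any nonzero chain's support, nonvanishing of the difference, and $\|c_i'\|=\|c_i\|$), and you phrase the axioms as circuit axioms for the dual rather than cocircuit axioms directly, but the argument is the same.
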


\begin{proof}
  We verify the cocircuit axioms. Clearly $\emptyset\not\in \mathcal{C}^*$. By definition of elementary chain, if $X,Y\in\mathcal{C}^*$ and $Y\subseteq X$ then $Y = X$. It remains to show the weak cocircuit elimination axiom. Let $c, c' \in C$ be $G$-primitive, elementary chains such that $\|c\| \neq \|c'\|$, and such that $e \in \|c\| \cap \|c'\|$. Define $d := (c'_e)^{-1} c' - (c_e)^{-1} c$. Since $-1, c_e, c'_e \in \group$, it follows that $d\in C$ is nonzero and $\|d\|\subseteq (\|c\|\cup\|c'\|)\setminus e$. Let $d'$ be an elementary chain of $C$ with $\|d'\| \subseteq \|d\|$. Then $\|d'\| \in \mathcal{C}^*$, as desired.
\end{proof}

We denote the matroid of Theorem~\ref{thm:chainmatroid} by $M(C)$.

\begin{definition}\label{def:chainmatroid}
  We say a matroid $M$ is \emph{$\parf$-representable} if there exists a $\parf$-chain group $C$ such that $M = M(C)$.\index{representation!over a skew partial field}
\end{definition}

\subsection{Duality}
Duality for skew partial fields is slightly more subtle than in the commutative case, as we have to move to the \emph{opposite ring} (see, for instance, Buekenhout and Cameron~\cite{BC95}).

\begin{definition}\label{def:opposite}
  Let $\ring = (S,+, \cdot, 0, 1)$ be a ring. The \emph{opposite} of $\ring$ is
  \begin{align}
    \ring^\circ := (S, +, \circ, 0, 1),
  \end{align}
  where $\circ$ is the binary operation defined by $p\circ q := q \cdot p$, for all $p,q \in S$.
\end{definition}
Note that $R$ and $R^\circ$ have the same ground set. Hence we may interpret a chain $c$ as a chain over $R$ or over $R^\circ$ without confusion. We can extend Definition \ref{def:opposite} to skew partial fields:

\begin{definition}
  Let $\parf = (\ring, \group)$ be a skew partial field. The \emph{opposite} of $\parf$ is
  \begin{align}
    \parf^\circ := (\ring^\circ, \group^\circ),
  \end{align}
  where $\group^\circ$ is the subgroup of $(\ring^\circ)^*$ generated by the elements of $\group$.
\end{definition}

Let $R$ be a ring, and $E$ a finite set. For two vectors $c,d \in R^E$, we define the usual inner product $c\cdot d := \sum_{e\in E} c_e d_e$.

\begin{lemma}\label{lem:dualchaingr}
	Let $R$ be a ring, let $E$ be a finite set, and let $C\subseteq R^E$ be a chain group. Then the set
	\begin{align}
		C^\perp := \{d \in R^E : c\cdot d = 0 \textrm{ for all } c \in C\}
	\end{align}
	is a chain group over $R^\circ$.
\end{lemma}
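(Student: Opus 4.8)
The plan is to verify the three chain-group axioms for $C^\perp$ as a subset of $(R^\circ)^E$, being careful that the scalar multiplication in $R^\circ$ is \emph{right} multiplication by elements of $R$. First I would note that $0 \in C^\perp$ trivially, since $c\cdot 0 = 0$ for all $c\in C$. For closure under addition: if $d, d' \in C^\perp$, then for every $c \in C$ we have $c\cdot(d+d') = \sum_e c_e(d_e + d'_e) = \sum_e c_e d_e + \sum_e c_e d'_e = 0 + 0 = 0$, using distributivity in $R$; hence $d + d' \in C^\perp$, and addition in $R^\circ$ agrees with addition in $R$ so this is the right computation.

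The one step that requires genuine attention is closure under scalar multiplication, because in $R^\circ$ multiplying the chain $d$ on the left by $r$ means forming the chain whose $e$-entry is $r \circ d_e = d_e \cdot r$, i.e.\ $d$ with every coordinate multiplied on the \emph{right} by $r$ (in $R$). So I would take $d \in C^\perp$ and $r \in R$, set $d'$ to be the chain with $d'_e = d_e r$, and compute, for an arbitrary $c \in C$,
\begin{align}
  c\cdot d' = \sum_{e\in E} c_e (d_e r) = \Bigl(\sum_{e\in E} c_e d_e\Bigr) r = (c\cdot d)\, r = 0\cdot r = 0,
\end{align}
where the second equality is right-distributivity of multiplication over addition in $R$ together with associativity. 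Hence $d' \in C^\perp$. This is exactly the point where passing to the opposite ring is forced: if we insisted on left scalar multiplication by $r$ in $R$, the entries would become $r d_e$ and we could only pull $r$ out on the left, which does not interact correctly with $c \cdot d$.

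The main (and really only) obstacle is thus conceptual rather than technical: one must recognize that the annihilator of a left $R$-module inside $R^E$ is naturally a left module over $R^\circ$ (equivalently, a right $R$-module), and then the verification is a one-line distributivity argument in each case. I would close by remarking that $C^\perp$ being a chain group over $R^\circ$ is the correct setup for the subsequent duality statement relating $M(C)$ and $M(C^\perp)$ over $\parf$ and $\parf^\circ$.
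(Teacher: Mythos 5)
Your proof is correct and follows essentially the same route as the paper's: verify $0 \in C^\perp$, closure under addition, and the key observation that left scalar multiplication in $R^\circ$ is coordinatewise right multiplication in $R$, so $c\cdot(dr) = (c\cdot d)r = 0$ by distributivity and associativity. The only difference is that you spell out the coordinatewise computations that the paper compresses into one line.
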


We call $C^\perp$ the \emph{orthogonal} or \emph{dual} chain group of $C$.

\begin{proof}
	Let $c \in C$, let $f,g \in C^\perp$, and let $r \in R$. Clearly $0 \in C^\perp$. Also $c\cdot (f+g) = 0$ and $c \cdot (f r) = (c\cdot f)r = 0$, so both $f+g \in C^\perp$ and $r \circ f \in C^\perp$, as desired.
\end{proof}

For general chain groups the dimension formula familiar from vector spaces over fields will not carry over (see \cite{AT01} for an example). However, for $\parf$-chain groups things are not so bleak.

\begin{theorem}\label{thm:dualchaingr}
	Let $\parf = (R,G)$ be a skew partial field, and let $C$ be a $\parf$-chain group. Then the following hold.
	\begin{enumerate}
		\item $(C^\perp)^\perp = C$.
		\item $C^\perp$ is a $\parf^\circ$-chain group;
		\item $M(C)^* = M(C^\perp)$;
	\end{enumerate}
\end{theorem}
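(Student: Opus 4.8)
The plan is to prove the three statements roughly in the order given, using (i) to bootstrap (ii) and (iii). First I would establish that $C$ (being a $\parf$-chain group) is "spanned by its primitive elementary chains" in a suitable sense, or at least that the elementary chains carry enough information to recover $C$; more precisely, I would relate $C$ to the linear subspace it generates after pushing down to a field via a maximal ideal, as in the proof of Proposition~\ref{prop:pfmatroid}. The key structural fact I expect to need is: the supports of the primitive elementary chains of $C$ are exactly the \emph{cocircuits} of $M(C)$, hence their complements are the \emph{hyperplanes}, and the circuits of $M(C)$ arise dually. This is the bridge between the algebra of $C^\perp$ and the matroid dual.

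For part (i), the inclusion $C \subseteq (C^\perp)^\perp$ is immediate from the definition of $\cdot$ and $\perp$ (and holds for any chain group, with the caveat about which ring the outer $\perp$ is taken over: $(C^\perp)^\perp$ is again a chain group over $(R^\circ)^\circ = R$). For the reverse inclusion I would argue that if $d \in (C^\perp)^\perp \setminus C$, then after reducing modulo a maximal ideal $I$ of $R$ the image $\phi(d)$ still lies in the double-orthogonal-complement of $\phi(C)$ over the field $R/I$; but over a field the double complement of a subspace is itself, so $\phi(d) \in \langle \phi(C)\rangle$. The subtlety — and I expect this to be the main obstacle — is that $C$ is only a module, not closed under the field operations, so "$\phi(d)$ is in the span of $\phi(C)$" does not immediately give "$d \in C$." To close this gap I would use the $\parf$-chain-group property: work with a \emph{generator matrix} whose rows are primitive chains (the paper announces a study of such matrices), exploit that elementary chains are primitive up to scaling, and show that any $d$ orthogonal to all of $C^\perp$ must be an $R$-linear combination of the rows, by clearing coordinates one elementary cocircuit at a time exactly as in the cocircuit-elimination computation in the proof of Theorem~\ref{thm:chainmatroid}.

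For part (ii), once (i) is known I would show that every elementary chain of $C^\perp$ is $G^\circ$-primitive up to a scalar in $R^\circ$. Here I would again reduce to the field case to identify the supports of elementary chains of $C^\perp$ with the \emph{circuits} of $M(C)$, then take an elementary chain $d \in C^\perp$ with $\|d\| = Z$ a circuit, pick $e \in Z$, and use the matroid structure: $Z \setminus e$ is independent, and "fundamental cocircuit" reasoning together with the primitivity of the elementary chains of $C$ forces the ratios $d_e^{-1} d_f$ to lie in $G$, whence (passing to the opposite ring) $d$ is $G^\circ$-primitive up to left multiplication by $d_e$. Part (iii) then follows almost formally: by the argument just sketched, the supports of elementary chains of $C^\perp$ are precisely the circuits of $M(C)$; but the cocircuits of $M(C)^*$ are by definition the circuits of $M(C)$; and $\mathcal{C}^*$ applied to $C^\perp$ (which makes sense by (ii)) returns exactly these supports. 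Hence $M(C^\perp) = M(C)^*$. The only real work is in (i) and the support-identification lemma; (ii) and (iii) are bookkeeping on top of those, with care taken throughout to track whether multiplication is happening in $R$ or in $R^\circ$.
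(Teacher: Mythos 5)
There is a genuine gap, and it sits at the heart of your plan: both your proof of (i) and your identification of the supports of elementary chains of $C^\perp$ with the circuits of $M(C)$ rest on ``pushing down to a field via a maximal ideal, as in the proof of Proposition~\ref{prop:pfmatroid}.'' That reduction is only available for \emph{commutative} partial fields. In the setting of Theorem~\ref{thm:dualchaingr} the ring $R$ may be noncommutative (that is the whole point of the section), and a quotient of a noncommutative ring by a maximal (two-sided) ideal is a simple ring, not a field; even a skew field admits no such field quotient. Moreover, even in the commutative case your double-complement argument does not close as stated: from $d\perp C^\perp$ you get $\phi(d)\perp\phi(C^\perp)$, but to conclude $\phi(d)\in\phi(C)$ you would need $\phi(C^\perp)$ to be the \emph{full} orthogonal complement of $\phi(C)$ over $R/I$, which is essentially an instance of the statement being proved. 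So the field-reduction scaffolding has to be discarded, and the ``patches'' you mention (generator matrix with primitive rows, clearing coordinates) are not patches but must carry the entire proof.

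That is in fact what the paper does, working entirely over $R$ and $R^\circ$. Fix a basis $B$ of $M(C)$ and, via Lemma~\ref{lem:generator}, write $C=\rowspan([I\ D])$ where the row indexed by $e\in B$ is a $G$-primitive chain supported on the $B$-fundamental cocircuit of $e$, normalized to have a $1$ in position $e$. One then checks directly that $C^\perp=\rowspan([-D^T\ I])$ over $R^\circ$: the inclusion $\supseteq$ is a computation, and for $\subseteq$ one observes that any $d\in C^\perp$ satisfies $d_e=-\sum_{f\notin B}(a^e)_f d_f$, so $d$ is determined by its coordinates on $E\setminus B$. Applying this with the roles of the two matrices reversed gives $(C^\perp)^\perp=C$. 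Varying $B$, every circuit of $M(C)$ arises as the support of a row of some $[-D^T\ I]$; such rows are automatically $G$-primitive (the entries of $D$ lie in $G\cup\{0\}$ and $-1\in G$) and elementary, and conversely an elementary chain of $C^\perp$ whose support contained no circuit could be pushed inside a basis $B$, where the identity block forces it to be zero. This yields (ii) and (iii) with no recourse to any quotient field, so your outline needs to be reorganized around this direct generator-matrix computation rather than around reduction modulo a maximal ideal.
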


To prove this result, as well as most results that follow, it will be useful to have a more concise description of the chain group.

\begin{definition}\label{def:generator}
	Let $R$ be a ring, $E$ a finite set, and $C\subseteq R^E$ a chain group. A set $C' \subseteq C$ \emph{generates} $C$ if, for all $c\in C$,
	\begin{align}
		c = \sum_{c' \in C'} p_{c'} c',
	\end{align}
	where $p_{c'} \in R$.
\end{definition}

\begin{lemma}\label{lem:generator}
	Let $\parf = (R,G)$ be a skew partial field, let $E$ be a finite set, and let $C$ be a $\parf$-chain group on $E$. Let $B$ be a basis of $M(C)$, and let, for each $e \in B$, $a^e$ be a $G$-primitive chain of $C$ such that $\|a^e\|$ is the $B$-fundamental cocircuit of $M(C)$ containing $e$. Then $C_B := \{a^e : e \in B\}$ is an inclusionwise minimal set that generates $C$. 
\end{lemma}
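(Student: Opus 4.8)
The plan is to exploit the near‑diagonal structure of $C_B$ with respect to the coordinates in $B$, together with the standard matroid fact that every basis meets every cocircuit. First I would record the key structural observation. For $e\in B$ the $B$‑fundamental cocircuit $\|a^e\|$ of $M(C)$ satisfies $\|a^e\|\cap B=\{e\}$, so $a^e_f=0$ for every $f\in B\setminus\{e\}$, while $a^e_e\in G\subseteq R^*$ (since $a^e$ is $G$‑primitive and $e\in\|a^e\|$). In matrix language, the $B$‑indexed rows $a^e$, read off on the columns $B$, form a diagonal matrix with invertible diagonal entries. (That chains $a^e$ with these supports exist is exactly the $\parf$‑chain‑group axiom: an elementary chain with support the relevant cocircuit equals $r c'$ for a $G$‑primitive $c'$, and $\|c'\|=\|rc'\|$ because the entries of $c'$ are units.)

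For the generation statement, given an arbitrary $c\in C$ I would set $p_e:=c_e\,(a^e_e)^{-1}\in R$ for $e\in B$ and put $c':=c-\sum_{e\in B}p_e a^e$. Since $C$ is a chain group, $c'\in C$. Using the diagonal structure, $c'_f=c_f-p_f a^f_f=0$ for every $f\in B$, so $\|c'\|\subseteq E\setminus B$. If $c'\neq 0$, then by choosing a nonzero chain of $C$ with inclusionwise minimal support inside $\|c'\|$ we obtain an elementary chain $d\in C$ with $\|d\|\subseteq E\setminus B$; but $\|d\|\in\mathcal{C}^*$ is a cocircuit of $M(C)$ and $B$ is a basis of $M(C)$, so $\|d\|\cap B\neq\emptyset$, a contradiction. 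Hence $c'=0$, i.e. $c=\sum_{e\in B}p_e a^e$, so $C_B$ generates $C$.

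For inclusionwise minimality, fix $e_0\in B$. Every chain in the submodule generated by $C_B\setminus\{a^{e_0}\}$ has $e_0$‑coordinate $\sum_{e\in B\setminus\{e_0\}}p_e a^e_{e_0}=0$ (again by $\|a^e\|\cap B=\{e\}$), whereas $a^{e_0}_{e_0}\neq 0$. Thus $a^{e_0}$ is not generated by the remaining $a^e$, so no proper subset of $C_B$ generates $C$.

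I do not expect a genuine obstacle here; the point requiring most care is that everything takes place over a possibly noncommutative ring. The chains form a \emph{left} $R$‑module, so the coefficients $p_e$ must be placed on the left, and ``solving'' the equation $c_f=p_f a^f_f$ is legitimate precisely because each $a^f_f$ lies in $G$ and is therefore a unit, with $(a^f_f)^{-1}$ applied on the right. Beyond this bookkeeping the argument uses only the definition of $M(C)$ from Theorem~\ref{thm:chainmatroid}, the fact that a $B$‑fundamental cocircuit meets $B$ in exactly one element, and the fact that every basis of a matroid meets every cocircuit.
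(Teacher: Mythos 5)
Your proposal is correct and follows essentially the same route as the paper: subtract the $B$-coordinate-matching combination of the $a^e$ from an arbitrary chain, observe the remainder would otherwise contain an elementary chain whose support is a cocircuit disjoint from the basis $B$, and deduce minimality from the diagonal structure of the $a^e$ on the columns of $B$. The only cosmetic difference is that the paper first rescales so that $(a^e)_e=1$, whereas you carry the invertible entries $a^e_e$ explicitly via $p_e=c_e(a^e_e)^{-1}$.
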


\begin{proof}
	Note that the lemma does not change if we replace $a^e$ by $g a^e$ for some $g\in G$. Hence we may assume that $(a^e)_e = 1$ for all $e\in B$. 
	
	First we show that $C_B$ generates $C$. Suppose otherwise, and let $c \in C$ be a chain that is not generated by $C_B$. Consider
	\begin{align}
		d := c - \sum_{e \in B} c_e a^e.
	\end{align}
	Since $d$ is not generated by $C_B$, we have $d \neq 0$. Since $C$ is a $\parf$-chain group, there is an elementary chain $d'$ with $\|d'\| \subseteq \|d\|$, and hence a cocircuit $X$ of $M(C)$ with $X\subseteq \|d\|$. But $X\cap B = \emptyset$, which is impossible, as cocircuits are not coindependent. Hence we must have $d = 0$.
	
	For the second claim it suffices to note that $(a^e)_e = 1$ and $(a^f)_e = 0$ for all $f\in B\setminus \{e\}$.
\end{proof}

Furthermore, it will be convenient to collect those chains in the rows of a matrix.
\begin{definition}\label{def:rowspan}
  Let $A$ be a matrix with $r$ rows and entries in a ring $\ring$. The \emph{row span} of $A$ is
  \begin{align}
    \rowspan(A) := \{zA  :  z \in R^r\}.
  \end{align}
  We say $A$ is a \emph{generator matrix} for a chain group $C$ if
\begin{align}
	C = \rowspan(A).
\end{align}
\end{definition}

\begin{proof}[Proof of Theorem \ref{thm:dualchaingr}]\addtocounter{theorem}{-3}
	Pick a basis $B$ of $M := M(C)$, and pick, for each $e\in B$, a chain $a^e$ such that $\| a^e\|$ is the $B$-fundamental cocircuit using $e$, and such that $(a^e)_e = 1$. Let $D$ be a $B\times (E\setminus B)$ matrix such that the row of $A := [I\ D]$ indexed by $e$ is $a^e$. Define the matrix $A^* := [-D^T \ I]$ over $\ring^\circ$.
	
	\begin{claim}
		$C^\perp = \rowspan(A^*)$.
	\end{claim}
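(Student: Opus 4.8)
The plan is to prove the two inclusions $\rowspan(A^*)\subseteq C^\perp$ and $C^\perp\subseteq\rowspan(A^*)$ by direct computation with the entries of $A$ and $A^*$. First I would record the shape of $A$: since $\|a^e\|$ is the $B$-fundamental cocircuit of $M$ through $e$ and $(a^e)_e=1$, we have $(a^e)_f=\delta_{ef}$ for $f\in B$ and $(a^e)_f=D_{ef}$ for $f\in E\setminus B$; moreover, by Lemma~\ref{lem:generator} the chains $a^e$ generate $C$, so $C=\rowspan(A)$. Writing $\rho^g$ for the row of $A^*$ indexed by $g\in E\setminus B$, we have $(\rho^g)_f=-D_{fg}$ for $f\in B$ and $(\rho^g)_f=\delta_{gf}$ for $f\in E\setminus B$; all entries and products here lie in the common underlying set of $\ring$ and $\ring^\circ$, only the formation of matrix products differing between the two rings.

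For $\rowspan(A^*)\subseteq C^\perp$: since $C=\rowspan(A)$ and, by Lemma~\ref{lem:dualchaingr}, $C^\perp$ is a chain group over $\ring^\circ$ (hence closed under the $\ring^\circ$-combinations that make up $\rowspan(A^*)$), it suffices to check that each $\rho^g$ is orthogonal to each $a^e$. This is the one-line computation
\[
  a^e\cdot\rho^g=\sum_{f\in B}\delta_{ef}(-D_{fg})+\sum_{f\in E\setminus B}D_{ef}\delta_{gf}=-D_{eg}+D_{eg}=0 .
\]

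For $C^\perp\subseteq\rowspan(A^*)$: take $d\in C^\perp$. As $C=\rowspan(A)$, the requirement $c\cdot d=0$ for all $c\in C$ is equivalent to $a^e\cdot d=0$ for every $e\in B$, which expands to
\[
  d_e=-\sum_{f\in E\setminus B}D_{ef}\,d_f\qquad(e\in B).
\]
Now put $z:=(d_g)_{g\in E\setminus B}$, regarded as a vector over $\ring^\circ$, and compute $zA^*$ using the product in $\ring^\circ$. On a coordinate $f\in E\setminus B$ the columns of $A^*$ indexed by $E\setminus B$ form an identity matrix, so $(zA^*)_f=d_f$; on a coordinate $e\in B$ the $\ring^\circ$-product gives $(zA^*)_e=\sum_{g\in E\setminus B}(-D_{eg})\,d_g$, which is $d_e$ by the displayed relation. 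Hence $d=zA^*\in\rowspan(A^*)$.

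The only subtlety, and the thing I would be most careful about, is sidedness: chains are row vectors in a left $\ring$-module, the inner product puts the $C$-entry on the left and the $C^\perp$-entry on the right (which is exactly why the dual is naturally a module over $\ring^\circ$), and consequently in $zA^*$ the coefficients $d_g$ must sit to the right of $-D_{eg}$ — which is precisely what the relation $d_e=-\sum_f D_{ef}d_f$ delivers. Once the conventions are pinned down there is no genuine noncommutativity obstruction, and I do not expect any hard step beyond bookkeeping.
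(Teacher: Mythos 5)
Your proof is correct and follows essentially the same route as the paper: both directions hinge on the orthogonality relations $d_e=-\sum_{f\in E\setminus B}(a^e)_f d_f$, showing a chain in $C^\perp$ is determined by its coordinates on $E\setminus B$ and is realized by an $\ring^\circ$-combination of the rows of $A^*$. You merely make explicit what the paper leaves as ``readily verified'' (the inclusion $\rowspan(A^*)\subseteq C^\perp$, via left-linearity of the inner product in its first argument and Lemmas~\ref{lem:generator} and~\ref{lem:dualchaingr}) and write down the coefficient vector $z$ concretely, with the sidedness over $\ring^\circ$ handled correctly.
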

	\begin{subproof}
		It is readily verified that $\rowspan(A^*) \subseteq C^\perp$. Pick a chain $d \in C^\perp$, and $e\in B$. Since $a^e \cdot d = 0$, we find
		\begin{align}
			d_e = -\sum_{f\in E\setminus B} (a^e)_f d_f.
		\end{align}
		It follows that $d$ is uniquely determined by the entries $\{d_f : f\in E\setminus B\}$, and that for each such collection there is a vector $d \in C^\perp$. From this observation we conclude that $C^\perp = \rowspan(A^*)$.		
	\end{subproof}
 From this it follows immediately that $(C^\perp)^\perp = C$.
	\begin{claim}
		For every circuit $Y$ of $M$ there is an elementary, $G$-primitive chain $d\in C^\perp$ with $\|d\| = Y$.
	\end{claim}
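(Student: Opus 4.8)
The plan is to reduce to the case where $Y$ is a \emph{fundamental} circuit, for which a suitable chain can be read straight off a generator matrix of $C^\perp$. The key observation is that the Claim just established — that $C^\perp = \rowspan([-D^T\ I])$ — made no use of the particular basis $B$: its proof only needed that $B$ is a basis of $M$, that the chains $a^e$ ($e\in B$) generate $C$, and that $(a^e)_e = 1$ while $(a^e)_{e'} = 0$ for distinct $e,e'\in B$; and by Lemma~\ref{lem:generator} such a generating family exists for \emph{every} basis. So, given a circuit $Y$ of $M$, I would first pick $e_0\in Y$; since $Y\setminus e_0$ is independent it extends to a basis $B_0$ of $M$, we have $e_0\notin B_0$, and $Y$ is the unique circuit contained in $B_0\cup\{e_0\}$, i.e.\ the $B_0$-fundamental circuit through $e_0$.

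Next I would run the argument of the preceding Claim with $B_0$ in place of $B$: choosing $G$-primitive chains $b^e\in C$ ($e\in B_0$) with $\|b^e\|$ the $B_0$-fundamental cocircuit through $e$ and $(b^e)_e = 1$, and writing $A_0 = [I\ D_0]$ (columns split as $B_0$, $E\setminus B_0$) for the matrix with these rows, one gets $C^\perp = \rowspan(A_0^*)$ with $A_0^* = [-D_0^T\ I]$ over $\ring^\circ$. Let $d$ be the row of $A_0^*$ indexed by $e_0$. Its coordinates are $1$ at $e_0$, $0$ elsewhere on $E\setminus B_0$, and $-(b^e)_{e_0}$ at $e\in B_0$; since $-1\in G$ and each $b^e$ is $G$-primitive, $d$ is a $G$-primitive chain of $C^\perp$. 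By the classical incidence relation between fundamental circuits and fundamental cocircuits ($e$ lies in the $B_0$-fundamental circuit through $e_0$ iff $e_0\in\|b^e\|$, for $e\in B_0$), the support of $d$ is exactly $\{e_0\}\cup\{e\in B_0 : e_0\in\|b^e\|\} = Y$.

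Finally I would check that $d$ is elementary. From the description of $C^\perp$ in the preceding Claim, every chain of $C^\perp$ is determined by its restriction to $E\setminus B_0$; if $d'\in C^\perp$ satisfies $\|d'\|\subseteq Y$, that restriction is supported on $Y\cap(E\setminus B_0) = \{e_0\}$, so $d'$ is a right $\ring$-multiple of $d$, say $d'_j = d_j r$ for all $j$ and some $r\in\ring$. Since $d_{e_0} = 1$ is a unit and every other nonzero coordinate of $d$ lies in $G$ (hence is a unit), we get $d'_{e_0} = r$ and $d'_e = -(b^e)_{e_0}r$, so either $r = 0$ and $d' = 0$, or $r\neq 0$ and $\|d'\| = Y$. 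Hence no nonzero chain of $C^\perp$ has support strictly inside $Y$, so $d$ is elementary.

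The only genuinely non-routine step is the first one — recognising that the generator-matrix description of $C^\perp$ does not depend on the chosen basis, which is what lets us assume $Y$ is fundamental. Everything afterwards is bookkeeping, the one place needing mild care being the elementarity argument: over the noncommutative ring $\ring^\circ$ a nonzero scalar multiple of a chain could in principle have strictly smaller support, and it is precisely the unit coordinates of $d$ that rule this out.
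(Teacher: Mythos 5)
Your proof is correct and follows essentially the same route as the paper: invoke the generator-matrix description $C^\perp=\rowspan([-D^T\ I])$ for a basis chosen so that $Y$ is a fundamental circuit, take the corresponding row, and check it is $G$-primitive and elementary. You merely spell out details the paper leaves implicit, namely the circuit--cocircuit incidence relation identifying the row's support with $Y$, and the fact that the unit (indeed $G\cup\{1\}$) entries of $d$ prevent a nonzero $R^\circ$-multiple from having strictly smaller support.
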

	\begin{subproof}
		Since the previous claim holds for every basis $B$ of $M(C)$, every circuit occurs as the support of a row of a matrix $A^*$ for the right choice of basis. Hence it suffices to prove that such a row is $G$-primitive and elementary.
		
		From the definition of $A^*$ it follows immediately that $d$ is $G$-primitive. Suppose $d$ is not elementary, and let $d' \in C^\perp$ be such that $\|d'\| \subsetneq d$. Now $d'$ is an $R^\circ$-linear combination of the rows of $A^*$, and $\|d'\|\cap(E\setminus B)$ contains at most one element. It follows that $d'$ is an $R^\circ$-multiple of $d$, a contradiction.
	\end{subproof}
	
	\begin{claim}
		If $d$ is an elementary chain in $C^\perp$, then $\|d\|$ is a circuit of $M$.
	\end{claim}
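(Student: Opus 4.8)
The plan is to establish the converse of the preceding claim in two moves: first show that $\|d\|$ is a dependent set of $M$, and then invoke the preceding claim to upgrade ``$\|d\|$ contains a circuit'' to ``$\|d\|$ is a circuit''.

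For the first move I would argue by contradiction. Suppose $\|d\|$ is independent in $M = M(C)$ and extend it to a basis $B$ of $M$. By Lemma~\ref{lem:generator}, $C$ has a generator matrix $A = [I\ D]$ whose rows are indexed by $B$, where the row $a^b$ is a $G$-primitive chain with $\|a^b\|$ the $B$-fundamental cocircuit through $b$; after rescaling we may take $(a^b)_b = 1$, so that $(a^b)_{b'} = 0$ for all $b' \in B\setminus b$. Since $d \in C^\perp$ we have $a^b \cdot d = 0$ for every $b\in B$, and splitting this sum over $B$ and $E\setminus B$ gives $d_b = -\sum_{f\in E\setminus B} (a^b)_f\, d_f$. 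But $\|d\| \subseteq B$ forces $d_f = 0$ for every $f\in E\setminus B$, so the right-hand side vanishes and $d_b = 0$ for all $b\in B$ as well; hence $d = 0$, contradicting that an elementary chain is by definition nonzero. Therefore $\|d\|$ is dependent in $M$ and contains a circuit $Y$.

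For the second move, apply the preceding claim to the circuit $Y$: there is an elementary, $G$-primitive chain $d' \in C^\perp$ with $\|d'\| = Y$. Since $Y \neq \emptyset$ we have $d' \neq 0$, and $\|d'\| = Y \subseteq \|d\|$. If the inclusion were proper, $d'$ would be a nonzero chain of $C^\perp$ whose support is strictly contained in $\|d\|$, contradicting that $d$ is elementary. Hence $\|d\| = Y$ is a circuit of $M$, as required.

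I expect the only real content to lie in the first move, and the single point requiring care there is the passage through the opposite ring: since $d$ is a chain over $\ring^\circ$, the products $c_f d_f$ must be taken in the correct order in $\ring$ when expanding $a^b \cdot d$. This causes no trouble, however, because the only entries of $a^b$ occurring on $B$ are $0$ and $1$, so no genuine noncommutativity intervenes. It is also worth noting that the generator matrix used here is the one attached to whatever basis $B$ we chose to extend $\|d\|$ to, which is precisely the freedom Lemma~\ref{lem:generator} affords.
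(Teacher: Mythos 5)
Your proof is correct and follows essentially the same route as the paper: rule out independence of $\|d\|$ via a generator matrix attached to a basis containing $\|d\|$, then combine the previous claim with elementarity of $d$ to conclude that $\|d\|$ is a circuit. The only cosmetic difference is that you re-derive the orthogonality relations $a^b\cdot d=0$ from the primal generator matrix (the same computation used in the paper's proof of its first claim) instead of citing $C^\perp=\rowspan(A^*)$ and reading off that the coefficients on $E\setminus B$ vanish.
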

	\begin{subproof}
		Suppose $d$ is elementary, yet $\|d\|$ is not a circuit of $M$. By the previous claim, $\|d\|$ does not contain any circuit, so $\|d\|$ is independent in $M$. We may assume that $B$ was chosen such that $\|d\|\subseteq B$. Now $d$ is an $R^\circ$-linear combination of the rows of $A^*$, yet $d_f = 0$ for all $f \in E\setminus B$. This implies $d = 0$, a contradiction.
	\end{subproof}
	It now follows that $C^\perp$ is indeed a $\parf^\circ$-chain group, and that $M(C^\perp) = M^*$.
\end{proof}\addtocounter{theorem}{3}

\subsection{Minors}
Unsurprisingly, a minor of a $\parf$-representable matroid is again $\parf$-representable.
\begin{definition}\label{def:chainminor}
	Let $\parf = (R,G)$ be a skew partial field, let $C$ be a $\parf$-chain group on $E$, and let $e\in E$. Then we define
	\begin{align}
		C\delete e & := \{c \in R^{E\setminus e} : \textrm{there exists } d \in C \textrm{ with } c_f = d_f \textrm{ for all } f \in E\setminus e\},\\
		C\contract e & := \{c \in R^{E\setminus e} : \textrm{there exists } d \in C \textrm{ with } d_e = 0, c_f = d_f \textrm{ for all } f \in E\setminus e\}.
	\end{align}
\end{definition}
We omit the straightforward, but notationally slightly cumbersome, proof of the following result.
\begin{theorem}\label{thm:minors}
	Let $\parf$ be a skew partial field, let $C$ be a $\parf$-chain group on $E$, and let $e\in E$. The following is true.
	\begin{enumerate}
		\item $C\delete e$ is a $\parf$-chain group, and $M(C\delete e) = M(C)\delete e$.
		\item $C\contract e$ is a $\parf$-chain group, and $M(C\contract e) = M(C)\contract e$.
	\end{enumerate}
\end{theorem}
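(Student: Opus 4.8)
The plan is to prove the two statements in parallel, since contraction and deletion are dual operations and Theorem~\ref{thm:dualchaingr} lets us deduce one from the other. First I would establish that $C\delete e$ and $C\contract e$ are indeed $R$-chain groups; this is immediate from the definitions, since the conditions defining membership (existence of a witnessing chain $d\in C$ agreeing with $c$ off $e$, and in the contraction case additionally $d_e = 0$) are preserved under addition and left scalar multiplication, using that $C$ itself is closed under these operations. The only mildly delicate point is checking $0\in C\delete e$ and $0\in C\contract e$, which follows by taking $d = 0$.

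Next I would identify the matroids. For deletion, the key observation is the standard fact that $C\delete e = (C^\perp\contract e)^\perp$, or more directly: an elementary chain $c$ of $C\delete e$ has support a minimal nonempty element of $\{\,\|d\|\setminus e : d\in C\,\}$, and I would argue these are exactly the cocircuits of $M(C)\delete e$. Concretely, a cocircuit of $M(C)\delete e = M(C)/e$ in the dual picture; it is cleaner to use the generator-matrix description. Choose a basis $B$ of $M(C)$ via Lemma~\ref{lem:generator}, giving a generator matrix $A = [I\ D]$. If $e\notin B$, then deleting column $e$ from $A$ gives a generator matrix for $C\delete e$, and the resulting matrix represents $M(C)\delete e$ (which has $B$ as a basis); if $e\in B$, one first performs a pivot to move $e$ out of $B$, using the fact that $e$ is not a coloop (or handles that degenerate case separately), and then deletes the column. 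The pivot operation must be checked to preserve the property of being a $\parf$-chain-group generator matrix, but this is routine: pivoting replaces the rows $a^f$ by $\parf$-primitive rescalings of elementary chains, which stay in $C$.

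For contraction I would use duality: $C\contract e = (C^\perp\delete e)^\perp$. This follows because $d\in C^\perp$ with $d_e$ free corresponds, after projecting away coordinate $e$, to the annihilator of $C\contract e$; unwinding the inner-product definitions gives the identity. Granting this, part~(\textit{ii}) follows from part~(\textit{i}) applied to $C^\perp$ (which is a $\parf^\circ$-chain group by Theorem~\ref{thm:dualchaingr}) together with $(C^\perp)^\perp = C$ and the matroid identity $M(C)\contract e = (M(C)^*\delete e)^*$. So really only the deletion case needs genuine work.

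The main obstacle I anticipate is the pivoting step when $e\in B$ (for deletion) or $e\notin B$ (for contraction): one must verify that after a pivot on a nonzero entry, the new row set is again a family of $\parf$-primitive elementary chains spanning $C$, and in particular that the pivot entry being in $G$ (which holds because the corresponding $2\times 2$-type submatrix governs an elementary chain, hence is $G$-primitive) makes the new rows legitimate. Handling the degenerate cases where $e$ is a loop or coloop of $M(C)$ also needs a separate line each, but those are trivial. Everything else is bookkeeping, which is presumably why the authors elected to omit the proof.
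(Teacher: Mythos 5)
The paper never actually gives a proof of this theorem---the authors explicitly omit it as ``straightforward, but notationally slightly cumbersome''---so there is nothing to compare your argument against line by line; I can only judge your route on its own terms, and in outline it is sound and consistent with the paper's toolkit. The closure argument showing $C\delete e$ and $C\contract e$ are $R$-chain groups is fine, and the duality reduction of (\textit{ii}) to (\textit{i}) is legitimate: the inclusion $C\contract e \subseteq (C^\perp\delete e)^\perp$ is indeed just unwinding the inner product, and the reverse inclusion follows by extending a chain $c$ orthogonal to $C^\perp\delete e$ by a zero in position $e$ and invoking $(C^\perp)^\perp = C$ from Theorem~\ref{thm:dualchaingr}; that theorem also converts $M(C^\perp\delete e) = M(C)^*\delete e$ into $M(C\contract e) = M(C)\contract e$ once (\textit{i}) is known for the $\parf^\circ$-chain group $C^\perp$.

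The one place where you assert rather than argue is the crux of (\textit{i}): knowing that the column-deleted matrix generates $C\delete e$ does not by itself show that $C\delete e$ is a $\parf$-chain group (i.e.\ that every elementary chain is an $R$-multiple of a $G$-primitive one), nor that its cocircuits are those of $M(C)\delete e$---that is precisely what the theorem claims, so ``the resulting matrix represents $M(C)\delete e$'' needs its own proof. It can be closed with the paper's tools: the rows of $[I\ D]$ supplied by Lemma~\ref{lem:generator} are $G$-primitive, pivoted versions remain strong $\parf$-matrices with entries in $G\cup\{0\}$ (Lemma~\ref{lem:skewpivot}), every pivot of the column-deleted matrix is a column-deletion of a pivot of $D$, so Proposition~\ref{prop:Pchaingrouptest} yields the $\parf$-chain group property, and Theorem~\ref{thm:skewpfinvertible} then identifies the bases of the new matroid with the bases of $M(C)$ avoiding $e$, which are the bases of $M(C)\delete e$ when $e$ is not a coloop. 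Two smaller remarks: the pivoting step you flag as the main obstacle is avoidable, since when $e$ is not a coloop you may simply choose the basis $B$ in Lemma~\ref{lem:generator} with $e\notin B$ from the start; and (\textit{ii}) also has a short direct proof if you prefer to bypass duality---any chain whose support lies inside a set avoiding $e$ automatically vanishes at $e$, so the elementary chains of $C\contract e$ are exactly the elementary chains of $C$ vanishing at $e$, and writing such a chain as $rc'$ with $c'$ $G$-primitive forces $c'_e = 0$ (otherwise $r = (rc'_e)(c'_e)^{-1} = 0$), while the cocircuits of $M(C)\contract e$ are exactly the cocircuits of $M(C)$ avoiding $e$.
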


In matroid theory, the first operation is called \emph{deletion} and the second \emph{contraction}. In coding theory the terms are, respectively, \emph{puncturing} and \emph{shortening}.

\subsection{Tutte's representability criterion and homomorphisms}
In this subsection we give a necessary and sufficient condition for an $\ring$-chain group to be a $\parf$-chain group. The theorem generalizes a result by Tutte~\cite[Theorem 5.11]{Tut65} (see also Oxley~\cite[Proposition 6.5.13]{oxley}). We start with a few definitions.

\begin{definition}\label{def:modpair}
	A pair $X_1, X_2$ of cocircuits of a matroid $M$ is \emph{modular} if
	\begin{align}
		\rank(M\contract S) = 2,
	\end{align}
	where $S = E(M)\setminus (X_1\cup X_2)$.
\end{definition}

 Recall that two flats $Y_1, Y_2$ of a matroid $M$ are a \emph{modular pair} if $\rank_M(Y_1) + \rank_M(Y_2) = \rank_M(Y_1\cup Y_2) + \rank_M(Y_1\cap Y_2)$. It is readily checked that $X_1,X_2$ is a modular pair of cocircuits if and only if $E(M)\setminus X_1, E(M)\setminus X_2$ is a modular pair of hyperplanes. More generally:

\begin{definition}
  A set $\{X_1, \ldots, X_k\}$ of distinct cocircuits of a matroid $M$ is a \emph{modular set} if
  \begin{align}
    \rank(M\contract S) = 2,
  \end{align}
  where $S := E(M)\setminus (X_1 \cup \cdots \cup X_k)$.
\end{definition}
Note that every pair $X_i,X_j$ in a modular set is a modular pair, and $X_i \cup X_j$ spans the modular set. The main result of this subsection is the following:

\begin{theorem}\label{thm:tutcondition}
  Let $M$ be a matroid with ground set $E$ and set of cocircuits $\mathcal{C}^*$. Let $\parf = (R,G)$ be a skew partial field. For each $X \in \mathcal{C}^*$, let $a^X$ be a $G$-primitive chain with $\|a^X\| = X$. Define the $\ring$-chain group
  \begin{align}
    C := \left\{ \sum_{X \in \mathcal{C}^*} r_X a^X  :  r_X \in \ring \right\}.
  \end{align}
  Then $C$ is a $\parf$-chain group with $M = M(C)$ if and only if there exist, for each modular triple $X,X',X'' \in \mathcal{C}^*$, elements $p,p',p'' \in \group$ such that
  \begin{align}\label{eq:addtriple}
    p a^X + p' a^{X'} + p'' a^{X''} = 0.
  \end{align}
\end{theorem}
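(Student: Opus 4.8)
This is an ``if and only if'', and its two implications are of rather different character: necessity is a short reduction to the rank-$2$ case, while sufficiency is the substantial part and follows the shape of Tutte's original argument. For \textbf{necessity}, assume $C$ is a $\parf$-chain group with $M=M(C)$ and fix a modular triple $X,X',X''$. Set $S:=E\setminus(X\cup X'\cup X'')$. By Theorem~\ref{thm:minors}, $C_0:=C\contract S$ is a $\parf$-chain group with $M(C_0)=M\contract S=:M_0$, which has rank $2$ and ground set $X\cup X'\cup X''$. Since $\|a^X\|=X\subseteq E\setminus S$, the chain $a^X$ already lies in $C_0$, and its support is a cocircuit of $M_0$, so $a^X$ is an elementary, $G$-primitive chain of $C_0$; likewise for $a^{X'},a^{X''}$. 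In $M_0$ the complements $H,H',H''$ of $X,X',X''$ are pairwise disjoint rank-$1$ flats, so $X\cap X'\supseteq H''$; pick $e\in H''$, so $e\in X\cap X'$ and $e\notin X''$, and put
\[
  d:=\bigl((a^{X'})_e\bigr)^{-1}a^{X'}-\bigl((a^X)_e\bigr)^{-1}a^X\in C_0 .
\]
Then $d\neq 0$ because $\|a^{X'}\|=X'\neq X=\|a^X\|$, and $d$ vanishes on all of $H''$: for $f$ in the parallel class $H''$ one has $c_f=c_e\mu_{ef}$ for a constant $\mu_{ef}$ and all $c\in C_0$ (use the elementary chain of $C_0^\perp$ supported on $\{e,f\}$, furnished by Theorem~\ref{thm:dualchaingr}), and both terms defining $d$ equal $1$ at $e$. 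Hence $\emptyset\neq\|d\|\subseteq X''$, and since cocircuits are incomparable, $\|d\|=X''$ and $d$ is elementary. Writing $d=rd''$ with $d''$ $G$-primitive forces $d''$ elementary with support $X''$, so $d''=ga^{X''}$ for some $g\in G$ by uniqueness of primitive elementary chains. This yields, in $R^{E\setminus S}$ and hence in $R^E$ (all three chains vanish on $S$),
\[
  -\bigl((a^X)_e\bigr)^{-1}a^X+\bigl((a^{X'})_e\bigr)^{-1}a^{X'}-rg\,a^{X''}=0 ,
\]
whose first two coefficients are in $G$; evaluating at any $f\in H'$ (where $(a^{X'})_f=0$ while $(a^X)_f,(a^{X''})_f\in G$) gives $rg=-\bigl((a^X)_e\bigr)^{-1}(a^X)_f\bigl((a^{X''})_f\bigr)^{-1}\in G$, proving \eqref{eq:addtriple}.

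For \textbf{sufficiency}, assume \eqref{eq:addtriple} holds for all modular triples. Cocircuits, modular triples and the family $\{a^X\}$ decompose over the connected components of $M$, so we may assume $M$ connected. Fix a basis $B$; for $e\in B$ let $X_e$ be the $B$-fundamental cocircuit through $e$, scaled so $(a^{X_e})_e=1$, and let $A$ be the $B\times E$ matrix whose rows are the $a^{X_e}$. Then $A=[\,I\ D\,]$ because $X_e\cap B=\{e\}$, all nonzero entries of $D$ lie in $G$, and $C_0:=\rowspan(A)\subseteq C$. The heart of the proof is the \emph{Main Lemma}: $a^X\in C_0$ for every cocircuit $X$ of $M$, i.e. $C_0=C$; I would prove it by induction on $|X\cap B|$. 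If $|X\cap B|=1$ then $X\subseteq\{e\}\cup(E\setminus B)$ for that $e$, so $X=X_e$ and $a^X$ is a row of $A$. If $X\cap B=\{e,e'\}$ then $\closure(B\setminus\{e,e'\})\subseteq E\setminus X$, which makes $\{X,X_e,X_{e'}\}$ a modular triple, so \eqref{eq:addtriple} gives $a^X=-p^{-1}(p'a^{X_e}+p''a^{X_{e'}})\in C_0$. For $|X\cap B|\ge 3$ one must interpose cocircuits: I would locate cocircuits $Y,W$ with $|Y\cap B|,|W\cap B|<|X\cap B|$ forming a modular triple with $X$, so that \eqref{eq:addtriple} expresses $a^X$ through $a^Y,a^W$, which lie in $C_0$ by induction. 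Producing such $Y,W$ is the delicate point, and relies on the Boolean modular family of flats $\closure(B\setminus F)$ ($F\subseteq B$) intersected against the hyperplane $E\setminus X$ to supply enough corank-$2$ flats, hence modular sets of cocircuits interpolating between $X$ and the fundamental cocircuits.

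Granting the Main Lemma, the rest is formal. Being a statement about $C$ and a basis, the Lemma holds verbatim with any basis $B'$ in place of $B$; so for every basis $B'$ the scaled chains $a^{X'_e}$ ($e\in B'$) through the $B'$-fundamental cocircuits are the rows of some matrix $[\,I\ D'\,]$ with $\rowspan([\,I\ D'\,])=C$. Hence the restriction map $C\to R^{B'}$, $c\mapsto c|_{B'}$, is a bijection, so no nonzero chain of $C$ vanishes on a basis of $M$; equivalently, every nonzero chain of $C$ has support containing a cocircuit of $M$. Combined with $\|a^X\|=X$ for each cocircuit $X$ and the incomparability of cocircuits, this shows that the supports of the elementary chains of $C$ are exactly the cocircuits of $M$, i.e. $M(C)=M$. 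Finally, if $c\in C$ has $\|c\|\subseteq X$ for a cocircuit $X$, then (for $c\neq 0$) $\|c\|=X$ by the above, and replacing $c$ by $c-c_x(a^X)_x^{-1}a^X$ for some $x\in X$ kills a coordinate without producing a nonzero chain of smaller support, so $c$ is an $R$-multiple of $a^X$; hence every elementary chain of $C$ is an $R$-multiple of a $G$-primitive chain, and $C$ is a $\parf$-chain group.

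The forward direction and the harvesting above are essentially bookkeeping; the genuine difficulty is the combinatorial core of the Main Lemma---choosing, for a cocircuit $X$ with $|X\cap B|\ge 3$, the intermediate cocircuits and modular triples that drive the induction down to the fundamental cocircuits. This is exactly where Tutte's original argument is intricate, and I expect it to be the main obstacle. A secondary but pervasive concern is noncommutativity: chains are row vectors in a left $R$-module, duality passes to $R^\circ$, and at each use of \eqref{eq:addtriple} one must check that the scalar produced lands in $G$ rather than merely in $R$.
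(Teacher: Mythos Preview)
Your necessity argument is correct but more elaborate than needed. The paper argues directly: pick $e\in X\setminus X'$ and $f\in X'\setminus X$; then $\{e,f\}$ is a basis of the rank-$2$ matroid $M\contract S$, with $X,X'$ as its fundamental cocircuits. Lemma~\ref{lem:generator} (applied to the $\parf$-chain group $C\contract S$) gives $a^{X''}=p\,a^X+p'\,a^{X'}$ for some $p,p'\in R$, and evaluating at $e$ (where $a^{X'}$ vanishes) and at $f$ (where $a^X$ vanishes) forces $p,p'\in G$ immediately---no appeal to duality or parallel classes is required.

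For sufficiency you propose induction on $|X\cap B|$, which is Tutte's original route, and you correctly flag the $|X\cap B|\ge 3$ step as an unfilled gap. The paper sidesteps this difficulty entirely with a different organising principle, packaged as Lemma~\ref{lem:tutgenerator}. Rather than measure an arbitrary cocircuit $X$ against a fixed basis, observe that every cocircuit is the $B'$-fundamental cocircuit of \emph{some} basis $B'$, and any two bases are linked by a sequence of single exchanges. So it suffices to show: if $B'=B\symdiff\{e,f\}$ is one exchange away from $B$, then each $B'$-fundamental cocircuit chain already lies in the span of the $B$-fundamental ones. For $g\in B\setminus e$, the $B$-fundamental cocircuit through $e$, the $B$-fundamental cocircuit through $g$, and the $B'$-fundamental cocircuit through $g$ form a modular triple (witnessed by the coline $B\setminus\{e,g\}$), so \eqref{eq:addtriple} expresses the third in terms of the first two; the $B'$-fundamental cocircuit through $f$ coincides with the $B$-fundamental cocircuit through $e$. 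This one-exchange argument replaces your missing inductive construction of intermediate cocircuits with a two-line calculation, and is what makes the proof short.

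Once $C=\rowspan(A)$ is established for every basis, your harvesting of the conclusion---no nonzero chain avoids a basis, elementary supports are exactly the cocircuits of $M$, and each elementary chain is an $R$-multiple of the corresponding $a^X$---matches the paper's.
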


We adapt the proof by White~\cite[Proposition 1.5.5]{Wte87} of Tutte's theorem. First we prove the following lemma:

\begin{lemma}\label{lem:tutgenerator}
Let $M$ be a matroid with ground set $E$, let $C$ be defined as in Theorem~\ref{thm:tutcondition}, and 
suppose \eqref{eq:addtriple} holds for each modular triple of cocircuits of $M$. Let $B$ be a basis of $M$, and let $X_1, \ldots, X_r$ be the set of $B$-fundamental cocircuits of $M$. Let $A$ be the matrix whose $i$th row is $a^{X_i}$. Then $C = \rowspan(A)$.
\end{lemma}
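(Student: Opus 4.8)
The plan is to show $\rowspan(A) \subseteq C$ trivially (each $a^{X_i}$ is among the generators of $C$), and then argue the reverse inclusion $C \subseteq \rowspan(A)$ by showing that every generator $a^X$ (for an arbitrary cocircuit $X \in \mathcal{C}^*$, not just the $B$-fundamental ones) lies in $\rowspan(A)$. Since $C$ is by definition the set of $R$-linear combinations of the $a^X$, this suffices. So the crux is: given an arbitrary cocircuit $X$, express $a^X$ as an $R$-linear combination of $a^{X_1}, \ldots, a^{X_r}$.

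First I would set up coordinates: after rescaling each $a^{X_i}$ by a suitable group element (which is harmless), assume the submatrix $A[B', B] = I$, where $B'$ indexes the rows and we've identified row $i$ with element $e_i \in B$ so that $X_i$ is the $B$-fundamental cocircuit through $e_i$; recall $X_i \cap B = \{e_i\}$. Now fix a cocircuit $X$. The vector $z := \sum_{e_i \in X \cap B} (a^X)_{e_i}\, a^{X_i}$ lies in $\rowspan(A)$ and agrees with $a^X$ on every coordinate in $B$. Consider $d := a^X - z$: it is a chain in $C$ that vanishes on all of $B$. If I can show $d = 0$, I'm done. The chain group $C$, intersected with the coordinate subspace supported off $B$, should be trivial — indeed any nonzero chain supported in $E \setminus B$ would contain a cocircuit disjoint from the basis $B$, which is impossible since cocircuits are not coindependent. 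But wait: to invoke this I need to know that every nonzero chain of $C$ contains (the support of) a cocircuit, i.e. that $C$ is a $\parf$-chain group — which is part of what a companion argument establishes, or which I should prove directly here. So instead I would run the argument at the level of the generators themselves: I want to show directly, using only the modular-triple hypothesis, that $a^X \in \rowspan(A)$.

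The key step, and the main obstacle, is this direct reduction. Here is how I would attack it. Let $k = |X \cap B|$. I induct on $k$ (or on $|E| - |X \cap B|$, or on $r$). If $X$ is itself one of the $X_i$ we are done. Otherwise, pick $e_i, e_j \in X \cap B$ with $i \ne j$ (if $|X \cap B| \le 1$ then $X$ is spanned within a rank-$\le 1$ situation and must equal some $X_i$, handled separately). The cocircuits $X_i, X_j$ form a modular pair, and $X_i \cup X_j$ spans a modular set — in particular there is a cocircuit $X'$ with $X' \subseteq X_i \cup X_j$, $X' \ne X_i, X_j$, such that $X, X', $ together with one more cocircuit in the modular line through $X_i, X_j$ form a modular triple; more carefully, I would use that in the rank-$2$ minor $M / S$ (with $S = E \setminus (X_i \cup X_j)$) the cocircuits are organized as parallel classes, $X$ restricted there is a cocircuit, and I can find a modular triple among $\{X, X_i, X_j, \ldots\}$. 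Applying \eqref{eq:addtriple} to a modular triple containing $X$ and two cocircuits each having \emph{smaller} intersection with $B$ than $X$ does — or directly two of the $X_i$'s — lets me write $p\, a^X = -p'\, a^{X'} - p''\, a^{X''}$ with $p \in G$ invertible, reducing $a^X$ to a combination of chains closer to the $B$-fundamental ones, and the induction closes. The delicate points to get right are (i) verifying that the needed modular triple actually exists with the cocircuit $X$ as one of its members and with a genuine decrease in the induction parameter, which is a purely matroidal statement about cocircuits in a rank-$2$ minor, and (ii) checking that the coefficient $p$ attached to $a^X$ in \eqref{eq:addtriple} is indeed a unit (it is, since $p \in G \subseteq R^*$), so that the equation can be solved for $a^X$. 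This matroid-geometry bookkeeping — navigating modular sets to peel off one element of $X \cap B$ at a time — is where essentially all the work lies; the algebra is a one-line rearrangement once the right triple is in hand.
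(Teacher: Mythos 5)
Your overall plan---show $\rowspan(A)\subseteq C$ trivially and then pull each generator $a^X$ into $\rowspan(A)$ by repeatedly applying \eqref{eq:addtriple}, inducting on $|X\cap B|$---is a reasonable strategy, and you rightly discarded the first idea (showing $d:=a^X-z=0$ via ``every nonzero chain contains a cocircuit'') as circular. But the step you defer as ``purely matroidal bookkeeping'' is exactly where the lemma lives, and the mechanism you sketch for it fails. The modular set determined by two $B$-fundamental cocircuits $X_i,X_j$ consists only of cocircuits contained in $X_i\cup X_j$: a cocircuit of $M\contract S$ with $S=E\setminus(X_i\cup X_j)$ is precisely a cocircuit of $M$ disjoint from $S$. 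An arbitrary cocircuit $X$ with $e_i,e_j\in X\cap B$ is in general not contained in $X_i\cup X_j$, so ``$X$ restricted to $M\contract S$'' is not a cocircuit of that rank-$2$ minor, and there need be no modular triple consisting of $X$ and two cocircuits on the modular line of $X_i,X_j$. Since \eqref{eq:addtriple} is hypothesized only for modular triples, you cannot apply it until you have exhibited an actual modular triple containing $X$ whose other two members have strictly smaller intersection with $B$; that existence statement is asserted but not proved, so the inductive step is a genuine gap.

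The gap is repairable along your lines, but it takes an argument: with $H:=E\setminus X$, extend the independent set $B\cap H$ (of size $r-|X\cap B|\le r-2$) to an independent $I\subseteq H$ of size $r-2$ and let $F:=\closure(I)$, a flat of rank $r-2$ inside $H$ containing $B\cap H$. In $M\contract F$ the point $H\setminus F$ misses $B$, while $B\setminus F=B\cap X$ spans the rank-$2$ minor and hence meets at least two other points $P',P''$; then $X$, $E\setminus(F\cup P')$, $E\setminus(F\cup P'')$ is a modular triple whose last two members meet $B$ in fewer elements, and the induction closes. None of this appears in your write-up. The paper avoids the peeling argument altogether: every cocircuit is fundamental with respect to some basis, bases are connected by single exchanges, and for $B'=B\symdiff\{e,f\}$ each $B'$-fundamental cocircuit $X''$ sits in an explicit modular triple with two $B$-fundamental cocircuits (the flat $B\setminus\{e,g\}$ certifying modularity), so \eqref{eq:addtriple} immediately gives $a^{X''}\in\rowspan(A)$, and iterating along exchange sequences reaches every cocircuit.
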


\begin{proof}
	Note that every cocircuit is a $B'$-fundamental cocircuit of some basis $B'$ of $M$. Note also that any pair of bases is related by a sequence of basis exchanges. Hence it suffices to show that $\rowspan(A)$ contains $a^{X''}$ for any cocircuit $X''$ that can be obtained by a single basis exchange.
  
  Pick $e\in B$, $f\in E(M)\setminus B$ such that $B' := B\symdiff\{x,y\}$ is a basis, and pick $g \in B\setminus x$. Let $X$ be the $B$-fundamental cocircuit containing $e$, let $X'$ be the $B$-fundamental cocircuit containing $g$, and let $X''$ be the $B'$-fundamental cocircuit containing $g$.

  \begin{claim}
  	$X, X', X''$ is a modular triple of cocircuits.
  \end{claim}
  \begin{subproof}
  	Consider $B'' := B\setminus \{e,g\}$. Since $B'' \subseteq S = E\setminus X\cup X'\cup X''$, it follows that $\rank(M\contract S) \leq 2$. since $\{e,g\}$ is independent in $M\contract S$ (because no circuit intersects a cocircuit in exactly one element), we must have equality, and the result follows.
  \end{subproof}

  By definition we have that there exist $p, p', p'' \in G$ such that $p a^X + p' a^{X'} + p''a^{X''} = 0$. But then
  \begin{align}
  	a^{X''} = -(p'')^{-1} p a^X - (p'')^{-1} p' a^{X'}.
  \end{align}

  It follows that each $a^{X''} \in \rowspan(A)$, as desired.  
\end{proof}

\begin{proof}[Proof of Theorem~\ref{thm:tutcondition}]
  Suppose $C$ is a $\parf$-chain group such that $M = M(C)$. Let $X,X',X'' \in \mathcal{C}^*$ be a modular triple, and let $S := E(M)\setminus X\cup X'\cup X''$. Pick $e \in X\setminus X'$, and $f \in X'\setminus X$. Since $X$, $X'$ are cocircuits in $M\contract S$, $\{e,f\}$ is a basis of $M\contract S$, again because circuits and cocircuits cannot intersect in exactly one element. Now $X$ and $X'$ are the $\{e,f\}$-fundamental cocircuits in $M\contract S$, and it follows from Lemma \ref{lem:generator} that $a^{X''} = p a^X + p' a^{X'}$ for some $p, p' \in \ring$. But $a^{X''}_e = p a^D_e$, and $a^{D''}_f = p' a^{D'}_f$, so $p,p' \in \group$, and \eqref{eq:addtriple} follows.

  For the converse, it follows from Lemma~\ref{lem:tutgenerator} that, for all $X \in \mathcal{C}^*$, $a^X$ is elementary, and hence that for every elementary chain $c$ such that $\|c\|\in \mathcal{C}^*$, there is an $r \in \ring$ such that $c = r a^{\|c\|}$. Suppose there is an elementary chain $c \in C$ such that $\|c\| \not \in \mathcal{C}^*$. Clearly $\|c\|$ does not contain any $X \in \mathcal{C}^*$. Therefore $\|c\|$ is coindependent in $M$. Let $B$ be a basis of $M$ disjoint from $\|c\|$, and let $X_1, \ldots, X_r$ be the $B$-fundamental cocircuits of $M$. Then $c = p_1 a^{X_1} + \cdots + p_r a^{X_r}$ for some $p_1, \ldots, p_r \in \ring$. But, since $c_e = 0$ for all $e \in B$, $p_1 = \cdots = p_r = 0$, a contradiction.
\end{proof}

As an illustration of the usefulness of Tutte's criterion, we consider homomorphisms. As with commutative partial fields, homomorphisms between chain groups preserve the matroid. 

\begin{theorem}\label{thm:chainhom}
	Let $\parf = (R,G)$ be a skew partial field, and let $C$ be a $\parf$-chain group on $E$. Let $\parf' = (R',G')$ be a skew partial field, and let $\phi:R\rightarrow R'$ be a ring homomorphism such that $\phi(G)\subseteq G'$. Then $\phi(C)$ is a $\parf'$-chain group, and $M(C) = M(\phi(C))$.
\end{theorem}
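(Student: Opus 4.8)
The plan is to reduce the statement to Tutte's criterion, Theorem~\ref{thm:tutcondition}. Write $M := M(C)$, with cocircuit set $\mathcal{C}^*$. First I would put $C$ into the form demanded by that theorem: since $C$ is a $\parf$-chain group, for each $X \in \mathcal{C}^*$ there is an elementary chain with support $X$ (Theorem~\ref{thm:chainmatroid}), which can be rescaled to a $\group$-primitive chain $a^X \in C$; by Lemma~\ref{lem:generator} the $a^X$ with $X$ among the fundamental cocircuits of a fixed basis already generate $C$, so $C = \{\sum_{X\in\mathcal{C}^*} r_X a^X : r_X \in \ring\}$. The forward implication of Theorem~\ref{thm:tutcondition} then supplies, for every modular triple $X, X', X'' \in \mathcal{C}^*$, elements $p, p', p'' \in \group$ with $p\,a^X + p'\,a^{X'} + p''\,a^{X''} = 0$.

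Next I would transport this data along $\phi$. For $g \in \group$ the image $\phi(g)$ is a unit of $\ring'$, because $\phi(g)\phi(g^{-1}) = \phi(1) = 1$; hence $\phi(g) \in \group'$, and in particular $\phi(g) \neq 0$ (rings being understood to be nonzero). Applying $\phi$ entrywise therefore sends each $a^X$ to a $\group'$-primitive chain $\phi(a^X)$ with $\|\phi(a^X)\| = \|a^X\| = X$ (no support is collapsed, precisely because the nonzero entries of $a^X$ are units), and applying the ring homomorphism $\phi$ to the relation above yields $\phi(p)\,\phi(a^X) + \phi(p')\,\phi(a^{X'}) + \phi(p'')\,\phi(a^{X''}) = 0$, with all three coefficients in $\group'$.

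Finally I would read $\phi(C)$ as the $\ring'$-chain group generated by $\{\phi(c) : c\in C\}$ — equivalently, $\rowspan(\phi(A))$ for a generator matrix $A$ of $C$ with $\phi$ applied entrywise — since the plain set image need not be closed under left $\ring'$-multiplication. Then $\phi(C) = \{\sum_{X\in\mathcal{C}^*} r_X \phi(a^X) : r_X \in \ring'\}$, and the computation just made verifies the modular-triple hypothesis of Theorem~\ref{thm:tutcondition} for the family $\{\phi(a^X)\}_{X\in\mathcal{C}^*}$ relative to $M$ and $\parf'$. The converse implication of that theorem then delivers both conclusions at once: $\phi(C)$ is a $\parf'$-chain group, and $M(\phi(C)) = M = M(C)$.

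I expect no serious obstacle: the argument is mostly bookkeeping on top of Tutte's criterion. The one place that genuinely needs care is the claim $\|\phi(a^X)\| = X$, which is exactly where one uses that the entries of a $\group$-primitive chain are units of $\ring$ and hence cannot be sent to $0$ by $\phi$ (and, correspondingly, why $\ring'$ must be nonzero). A secondary, cosmetic point is to pin down the meaning of $\phi(C)$ as the generated chain group rather than the literal image.
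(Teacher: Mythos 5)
Your proposal is correct and follows essentially the same route as the paper: choose a $\group$-primitive chain $a^X$ for each cocircuit, observe that $\phi$ preserves primitivity and the modular-triple relations of Theorem~\ref{thm:tutcondition}, and conclude via that criterion. The extra details you supply (the reduction via Lemma~\ref{lem:generator}, the unit argument showing $\|\phi(a^X)\| = X$, and reading $\phi(C)$ as the generated chain group) are exactly the bookkeeping the paper leaves implicit.
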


\begin{proof}
	For each cocircuit $X$ of $M = M(C)$, pick a $G$-primitive chain $a^X$. Then clearly $\phi(a^X)$ is a $G'$-primitive chain. Moreover, if $X,X',X''$ is a modular triple of cocircuits, and $p,p',p'' \in G$ are such that $p a^X + p' a^{X'} + p'' A^{X''} = 0$, then $\phi(p), \phi(p'), \phi(p'') \in G'$ are such that $\phi(p) \phi(a^X) + \phi(p') \phi(a^{X'}) + \phi(p'') \phi(A^{X''}) = 0$. The result now follows from Theorem \ref{thm:tutcondition}.
\end{proof}

\subsection{Representation matrices}

Our goals in this subsection are twofold. First, we wish to study generator matrices of chain groups in more detail, as those matrices are typically the objects we work with when studying representations of specific matroids. As we have seen, they also feature heavily in our proofs. 

Second, for commutative partial fields $\parf$ we currently have two definitions of what it means to be $\parf$-representable: Definitions \ref{def:representable} and \ref{def:chainmatroid}. We will show that these definitions are equivalent.

Weak and strong $\parf$-matrices can be defined as follows:

\begin{definition}\label{def:skewPmatrix}
  Let $\parf$ be a skew partial field. An $X\times E$ matrix $A$ is a \emph{weak $\parf$-matrix} if $\rowspan(A)$ is a $\parf$-chain group. We say that $A$ is \emph{nondegenerate} if $|X| = \rank(M(\rowspan(A)))$. We say that $A$ is a \emph{strong $\parf$-matrix} if $[I \ A]$ is a weak $\parf$-matrix.
\end{definition}

The following is clear:

\begin{lemma}\label{lem:weakskewmatops}
  Let $\parf = (\ring,\group)$ be a skew partial field, let $A$ be an $X\times E$ weak $\parf$-matrix, and let $F$ be an invertible $X\times X$ matrix with entries in $\ring$. Then $FA$ is a weak $\parf$-matrix.
\end{lemma}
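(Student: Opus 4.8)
The plan is to show that left-multiplication by an invertible matrix $F$ does not change the row span, so the chain group property is preserved trivially. Concretely, I would argue that $\rowspan(FA) = \rowspan(A)$ as subsets of $\ring^E$. For the inclusion $\rowspan(FA) \subseteq \rowspan(A)$: any element of $\rowspan(FA)$ has the form $z(FA) = (zF)A$ for some $z \in \ring^X$, and since $zF \in \ring^X$, this lies in $\rowspan(A)$. Here I must be slightly careful about the handedness of scalars: in Definition \ref{def:rowspan} the row span is $\{zA : z \in R^r\}$ with $z$ a row vector acting on the left, and matrix multiplication is associative even over a noncommutative ring, so $z(FA) = (zF)A$ is valid without any commutativity assumption. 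For the reverse inclusion $\rowspan(A) \subseteq \rowspan(FA)$: given $zA \in \rowspan(A)$, write $zA = (zF^{-1})(FA)$, using that $F$ is invertible over $\ring$ so $F^{-1}$ exists with entries in $\ring$, and $zF^{-1} \in \ring^X$; hence $zA \in \rowspan(FA)$.

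Having established $\rowspan(FA) = \rowspan(A)$, the conclusion is immediate: $A$ being a weak $\parf$-matrix means, by Definition \ref{def:skewPmatrix}, that $\rowspan(A)$ is a $\parf$-chain group; since $\rowspan(FA)$ is literally the same set, it is also a $\parf$-chain group, so $FA$ is a weak $\parf$-matrix. I would also remark that $FA$ has the same index set $X$ for its rows, so the statement typechecks.

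I do not anticipate any real obstacle here — this is the "clear" lemma the authors flagged. The only point requiring any attention is the noncommutativity: one must not accidentally move scalars across matrix entries, but associativity of matrix multiplication over an arbitrary (unital, associative) ring is all that is needed, and both the forward and backward directions use only that. A secondary minor point is that one should confirm $F^{-1}$ indeed has entries in $\ring$, which is exactly what "invertible $X \times X$ matrix with entries in $\ring$" is taken to mean (two-sided inverse in the matrix ring $\matring_X(\ring)$). So the proof is essentially two lines: prove the row-span equality, then invoke the definition.

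Here is the proof I would write.

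\begin{proof}
  Since $F$ is invertible, its inverse $F^{-1}$ is again an $X\times X$ matrix with entries in $\ring$. We claim $\rowspan(FA) = \rowspan(A)$. Indeed, if $c \in \rowspan(FA)$, then $c = z(FA)$ for some $z \in \ring^X$, and by associativity of matrix multiplication $c = (zF)A \in \rowspan(A)$. Conversely, if $c \in \rowspan(A)$, say $c = zA$ with $z \in \ring^X$, then $c = (zF^{-1})(FA) \in \rowspan(FA)$. Hence the two row spans coincide. Since $A$ is a weak $\parf$-matrix, $\rowspan(A)$ is a $\parf$-chain group; therefore $\rowspan(FA) = \rowspan(A)$ is a $\parf$-chain group, and $FA$ is a weak $\parf$-matrix.
\end{proof}
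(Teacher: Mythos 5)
Your proof is correct, and it fills in exactly the argument the paper leaves implicit (the paper states this lemma as ``clear'' without proof): left multiplication by an invertible matrix preserves the row span, hence the chain group, hence the weak $\parf$-matrix property. The handling of noncommutativity via associativity and the two-sided inverse of $F$ is the right level of care, and nothing further is needed.
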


Again, nondegenerate weak $\parf$-matrices can be converted to strong $\parf$-matrices:

\begin{lemma}\label{lem:weakskewstrong}
  Let $\parf$ be a skew partial field, let $A$ be an $X\times Y$ nondegenerate weak $\parf$-matrix, and let $B$ be a basis of $M(\rowspan(A))$. Then $A[X,B]$ is invertible.
\end{lemma}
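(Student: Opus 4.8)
The plan is to compare the given matrix $A$ with the ``canonical'' generator matrix of $C := \rowspan(A)$ supplied by Lemma~\ref{lem:generator}, and to read off invertibility of $A[X,B]$ from the fact that both matrices have exactly $\rank(M(C))$ rows. Set $r := |X|$; nondegeneracy together with the choice of $B$ gives $r = \rank(M(C)) = |B|$, so $A[X,B]$ is a square $r\times r$ matrix over $\ring$. Apply Lemma~\ref{lem:generator} to the basis $B$: choose $G$-primitive chains $a^e$ ($e\in B$) whose supports are the $B$-fundamental cocircuits, normalized as in the proof of that lemma so that $(a^e)_e = 1$ and $(a^e)_f = 0$ for $f\in B\setminus\{e\}$; let $A'$ be the $B\times E$ matrix with these rows, so that $A'[B,B] = I$ and $\rowspan(A') = C$. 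The map $\lambda'\colon z\mapsto zA'$ is then a left $\ring$-module isomorphism $\ring^B\to C$: it is onto because $\{a^e\}$ generates $C$, and one-to-one because $zA' = 0$ forces $z = z\,A'[B,B] = 0$ upon restricting to the $B$-coordinates.

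Next I would extract two matrix identities. Every row of $A$ is a chain of $C$, so by the generation property together with the normalization of $A'$ the $i$-th row of $A$ equals $\sum_{e\in B} A_{ie}\,a^e$; that is, $A = A[X,B]\cdot A'$. Conversely, every row $a^e$ of $A'$ lies in $C = \rowspan(A)$, so $a^e = q^e A$ for some $q^e\in \ring^X$; collecting the $q^e$ as the rows of a $B\times X$ matrix $Q$ gives $A' = QA$, and restricting this to the $B$-columns yields $I = A'[B,B] = Q\cdot A[X,B]$. Hence $A[X,B]$ has a left inverse $Q$.

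It remains to promote the left inverse to a two-sided inverse. From $A = A[X,B]\cdot A'$ the surjection $\lambda\colon \ring^X\to C$, $z\mapsto zA$, factors as $\lambda = \lambda'\circ(z\mapsto z\,A[X,B])$, and since $\lambda'$ is an isomorphism, $A[X,B]$ is invertible exactly when $\lambda$ is injective, i.e.\ when the $r$ rows of $A$ are left-linearly independent over $\ring$. This is the only nontrivial point, and it is where the word \emph{nondegenerate} has to be used: over a noncommutative ring a square matrix can have a left inverse without being invertible, so the chain-group structure must be invoked. The argument I would give: the $r$ rows of $A$ generate $C$, while $C\cong \ring^{r}$ is free of rank $r = \rank(M(C))$ via $\lambda'$; a nontrivial left relation among the rows would exhibit $C$ as generated by fewer than $\rank(M(C))$ chains, which contradicts that a free $\ring$-module of rank $r$ has no generating set of smaller size (for the rings relevant to this paper — matrix rings over a field, or the quaternions $\quat$ — this is an immediate length/dimension count). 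Thus $\lambda$ is injective, $z\mapsto z\,A[X,B]$ is an isomorphism $\ring^X\to \ring^B$ of free left modules, and therefore $A[X,B]$ is invertible. The main obstacle is precisely this last step — pinning down the finiteness input on $\ring$ that lets nondegeneracy do its work; everything else is routine bookkeeping with the canonical generator $A'$.
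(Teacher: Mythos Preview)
Your approach coincides with the paper's: build the canonical generator matrix $A'$ from $B$-fundamental cocircuits (Lemma~\ref{lem:generator}), express $A' = FA$, and read off $F\cdot A[X,B] = I$. The paper's proof is even terser than yours --- it stops at $(FA)[B,B] = I_B$ and declares ``the result follows'' --- so you are right to flag the passage from a one-sided inverse to a two-sided one as the only nontrivial step. Over a general ring a square matrix with a left inverse need not be invertible, and nothing in the definition of a skew partial field excludes such rings; the lemma as stated is really only valid when $\ring$ is \emph{stably finite} (equivalently, every one-sided inverse of a square matrix is two-sided), a property that holds for all the rings the paper actually uses.

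That said, your proposed justification for this step is not quite right. You argue that a nontrivial left relation among the rows of $A$ would allow $C$ to be generated by fewer than $r$ chains. This inference fails over non-division rings: a relation $\sum z_i a^i = 0$ only lets you drop a generator when some coefficient $z_j$ is a unit, and in $\matring(n,\field)$ there are plenty of nonzero non-units. What you actually need is the statement that a surjective left $\ring$-linear map $\ring^r\to \ring^r$ is injective --- i.e.\ stable finiteness --- and your ``length/dimension count'' remark is better read as a direct proof of \emph{that} fact for the rings at hand (for $\matring(n,\field)$, unwrap via $z_n$ and count $\field$-dimensions; for $\quat$, it is a skew field). So the conclusion stands for the rings in the paper, but the bridge you wrote down should be replaced by the stable-finiteness statement rather than a generator-dropping argument.
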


\begin{proof}
  For all $e \in B$, let $a^e$ be a primitive chain such that $\|a^e\|$ is the $B$-fundamental cocircuit of $e$. Then $a^e = f^e A$ for some $f^e \in R^r$. Let $F$ be the $B\times X$ matrix whose $e$th row is $f^e$. Then $(FA)[B,B] = I_B$, and the result follows.
\end{proof}

We immediately have

\begin{corollary}\label{cor:weakskewstrong}
  Let $\parf = (\ring, \group)$ be a skew partial field, and let $A$ be an $X\times Y$ nondegenerate weak $\parf$-matrix. Then there exists an invertible matrix $D$ over $\ring$ such that $DA$ is a strong $\parf$-matrix.
\end{corollary}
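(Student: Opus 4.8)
The plan is to combine the two preceding lemmas directly. Given a nondegenerate weak $\parf$-matrix $A$ with row set $X$ and column set $Y$, first invoke Proposition-level facts about the chain group: since $A$ is nondegenerate, $|X| = \rank(M(\rowspan(A)))$, so $M := M(\rowspan(A))$ has a basis $B$ with $|B| = |X|$. By Lemma~\ref{lem:weakskewstrong}, the square submatrix $A[X,B]$ is invertible over $\ring$; call its inverse $F := A[X,B]^{-1}$, an invertible $X\times X$ matrix over $\ring$.

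Next, set $A' := FA$. By Lemma~\ref{lem:weakskewmatops}, $A'$ is again a weak $\parf$-matrix (left-multiplication by an invertible matrix over $\ring$ preserves the row span, hence preserves being a $\parf$-chain group). Moreover $A'[X,B] = FA[X,B] = I$. After reindexing the rows of $A'$ by $B$ (identifying $X$ with $B$ via the identity block), we have $A' = [I_B \ \ D]$ for some $B \times (Y\setminus B)$ matrix $D$ over $\ring$, where $D := A'[X, Y\setminus B]$. Then by Definition~\ref{def:skewPmatrix}, the fact that $[I_B\ \ D] = A'$ is a weak $\parf$-matrix says precisely that $D$ is a strong $\parf$-matrix. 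Taking the composite invertible matrix to be $F$ (possibly composed with the permutation matrix that reorders columns so the basis block comes first, which is also invertible over $\ring$), we obtain an invertible $D$ over $\ring$ — here I should rename to avoid the clash with the strong-matrix notation, say an invertible matrix $P$ — such that $PA$, after the column reordering, has the form $[I \ \ D]$ with $D$ strong; equivalently $PA$ itself is a strong $\parf$-matrix in the sense that prepending an identity block yields a weak $\parf$-matrix.

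The only subtlety to be careful about is the bookkeeping between "a strong $\parf$-matrix" as defined (a matrix $N$ such that $[I\ N]$ is a weak $\parf$-matrix) and the shape of $PA$: one must note that whether we place the identity block as literal new columns or recognize it inside $PA$ as the columns indexed by $B$ amounts to the same chain group up to relabelling the ground set, and relabelling does not affect any of the chain-group or matroid notions. So the honest statement is: choosing $P$ to be $F$ followed by the column permutation sending $B$ to the front, the matrix $PA$ equals $[I_B \ \ D]$, and this being a weak $\parf$-matrix is exactly the assertion that $D$ is a strong $\parf$-matrix; absorbing the $I_B$ block, $PA$ is a strong $\parf$-matrix.

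I do not expect any real obstacle here — everything needed is already assembled in Lemmas~\ref{lem:weakskewmatops} and~\ref{lem:weakskewstrong}, so the proof is essentially a two-line composition plus a remark about column reindexing. The one place to exercise care is making sure the invertible matrix witnessing the corollary is genuinely invertible over the (possibly noncommutative) ring $\ring$: $F = A[X,B]^{-1}$ is invertible by construction, and the column permutation matrix is invertible over any ring, so their product is invertible over $\ring$, as required.
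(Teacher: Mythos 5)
Your argument is correct and is exactly the route the paper intends: the corollary is stated there without proof as an immediate consequence of Lemmas \ref{lem:weakskewmatops} and \ref{lem:weakskewstrong}, which is precisely your composition with $F := A[X,B]^{-1}$. One small bookkeeping correction: the column permutation cannot be absorbed into the left factor (left multiplication permutes rows, not columns), but none is needed, since $FA$ already carries an identity block on the columns indexed by $B$, and prepending a fresh identity as in Definition \ref{def:skewPmatrix} merely adjoins parallel copies of those columns, which clearly preserves the property that the row span is a $\parf$-chain group (it is not literally a relabelling of the same ground set, as the ground set grows, but elementary and primitive chains correspond in the obvious way).
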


Although we abandoned determinants, we can recover the next best thing in strong $\parf$-matrices: pivoting.

\begin{definition}\label{def:skewpivot}Let $A$ be an $X\times Y$ matrix over a ring $\ring$, and let $x \in X, y \in Y$ be such that $A_{xy} \in \ring^*$. Then we define $A^{xy}$ to be the ${(X\setminus x)\cup y} \times {(Y\setminus y)\cup x}$ matrix with entries\index{pivot}
\begin{align}
  (A^{xy})_{uv} = \left\{ \begin{array}{ll}
    (A_{xy})^{-1} \quad & \textrm{if } uv = yx\\
    (A_{xy})^{-1} A_{xv} & \textrm{if } u = y, v\neq x\\
    -A_{uy} (A_{xy})^{-1} & \textrm{if } v = x, u \neq y\\
    A_{uv} -  A_{uy}(A_{xy})^{-1} A_{xv} & \textrm{otherwise.}
  \end{array}\right.
\end{align}
\end{definition}

We say that $A^{xy}$ is obtained from $A$ by \emph{pivoting} over $xy$.
See also Figure~\ref{fig:pivot}.

\begin{figure}[hbtp]
  \begin{align*}
    \kbordermatrix{ & y & & &  \\
                  x & \alpha & \vline &  c \: \\ \cline{2-5}
                    &   b    & \vline &  \phantom{\dfrac{1}{1}} D \phantom{\dfrac{1}{1}}  \\
                  } \rightarrow
        \kbordermatrix{ & x & & &  \\
                  y & \alpha^{-1} & \vline & \: \alpha^{-1} c \: \\ \cline{2-5}
                    &   -b\alpha^{-1}    & \vline &  \phantom{\dfrac{1}{1}} D - b\alpha^{-1} c  \\
                  }
  \end{align*}
  \caption{Pivoting over $xy$}\label{fig:pivot}
\end{figure}

\begin{lemma}\label{lem:skewpivot}
  Let $\parf$ be a skew partial field, let $A$ be an $X\times Y$ strong $\parf$-matrix, and let $x \in X, y\in Y$ be such that $A_{xy} \neq 0$. Then $A^{xy}$ is a strong $\parf$-matrix.
\end{lemma}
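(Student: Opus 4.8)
The plan is to work directly with the definition of strong $\parf$-matrix, i.e. to show that $[I\ A^{xy}]$ is a weak $\parf$-matrix, meaning $\rowspan([I\ A^{xy}])$ is a $\parf$-chain group. The natural bridge is the observation that pivoting $A$ over $xy$ corresponds exactly to a change of basis on the chain group $\rowspan([I\ A])$. Concretely, if we order the columns of $[I\ A]$ as $X\cup Y$ (so the identity block is indexed by $X$), then $\rowspan([I\ A^{xy}])$, once we relabel its identity block to sit on $(X\setminus x)\cup y$, should equal $\rowspan([I\ A])$ as a subset of $\ring^{X\cup Y}$. This is the standard fact that pivoting is left-multiplication by an invertible matrix followed by a permutation of the roles of a row-index and a column-index; I would verify it by a direct computation matching Figure~\ref{fig:pivot}, writing $F$ for the elementary row operation that clears column $y$ using row $x$ and scales, and checking that $F[I\ A]$, after swapping the columns labelled $x$ and $y$, is precisely $[I\ A^{xy}]$ (up to reordering within the blocks).

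Granting that, the argument is short. Let $C := \rowspan([I\ A])$. Since $A$ is a strong $\parf$-matrix, $C$ is a $\parf$-chain group on the ground set $X\cup Y$. The matrix $[I\ A^{xy}]$ is then a generator matrix whose rows are obtained from those of $[I\ A]$ by left-multiplication with the invertible matrix $F$ over $\ring$ (Lemma~\ref{lem:weakskewmatops}), so $\rowspan([I\ A^{xy}]) = F\cdot C = C$; the permutation swapping the column-labels $x$ and $y$ does not change the chain group as a subset of $\ring^{X\cup Y}$, only which coordinate we regard as "identity". Hence $\rowspan([I\ A^{xy}])$ is again the $\parf$-chain group $C$ (up to relabelling the ground set, which is irrelevant to the matroid and to the chain-group property), and therefore $[I\ A^{xy}]$ is a weak $\parf$-matrix. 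By Definition~\ref{def:skewPmatrix} this says exactly that $A^{xy}$ is a strong $\parf$-matrix.

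**The main obstacle** is the bookkeeping in the first step: checking carefully that the entrywise formulas in Definition~\ref{def:skewpivot} really are what you get by performing the row operation $F$ on $[I\ A]$ and then transposing the roles of columns $x$ and $y$, keeping track of which block each coordinate lives in and making sure the noncommutativity (left- versus right-multiplication by $(A_{xy})^{-1}$) is handled correctly. This is genuinely routine once set up, but it is the only place where anything needs to be verified; everything after it is a one-line appeal to Lemma~\ref{lem:weakskewmatops} and Definition~\ref{def:skewPmatrix}. One should also note, in passing, that $A_{xy}\neq 0$ together with $A$ being a strong $\parf$-matrix forces $A_{xy}\in G$ (apply the chain-group condition to the elementary chain supported appropriately, or note that the $1\times 1$ submatrix constraint on $[I\ A]$ gives $A_{xy}\in G\cup\{0\}$), so the inverse $(A_{xy})^{-1}$ appearing in the pivot formula indeed lies in $\ring$ and in fact in $G$; this is what makes the row operation $F$ well-defined over $\ring$.
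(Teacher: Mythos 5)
Your proposal is correct and follows essentially the same route as the paper's proof: pivoting over $xy$ is realized as left multiplication of $[I\ A]$ by the invertible matrix $F$ of \eqref{eq:Fskew} followed by an exchange of the columns labelled $x$ and $y$, after which Lemma~\ref{lem:weakskewmatops} (plus the observation that column exchange does not affect the chain group) gives the result. Your closing remark that $A_{xy}\neq 0$ must in fact be a unit so that the pivot of Definition~\ref{def:skewpivot} is defined is a reasonable extra point, handled implicitly in the paper.
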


\begin{proof}
  Observe that, if $A$ equals the first matrix in Figure~\ref{fig:pivot}, then $[I \  A^{xy}]$ can be obtained from $[I \  A]$ by left multiplication with
  \begin{align}\label{eq:Fskew}
  F := \kbordermatrix{ & x & X'\\
                   y  & a^{-1} & 0 \cdots 0\\
                      & \phantom{0} &\\
                   X' & -b a^{-1} &    I_{X'}\\
                      & \phantom{0} &
                   },
  \end{align}
  followed by a column exchange. Exchanging columns clearly preserves weak $\parf$-matrices, and $F$ is invertible. The result now follows from Lemma \ref{lem:weakskewmatops}.
\end{proof}

While Theorem \ref{thm:tutcondition} may help to verify that a chain group $C$ is indeed a $\parf$-chain group, we need to know the cocircuits of the (alleged) matroid to be able to apply it. The following proposition circumvents that step:
\begin{proposition}\label{prop:Pchaingrouptest}
	Let $\parf = (R,G)$ be a partial field, let $D$ be an $X\times Y$ matrix over $R$ such that every matrix obtained from $D$ by a sequence of pivots has all entries in $G\cup\{0\}$. Then $\rowspan([I\ D])$ is a $\parf$-chain group.
\end{proposition}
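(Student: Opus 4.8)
The plan is to reduce the statement to Tutte's criterion, Theorem~\ref{thm:tutcondition}. That criterion produces a $\parf$-chain group out of a matroid $M$, its cocircuits, and $G$-primitive chains carried by them, subject to the additivity relation~\eqref{eq:addtriple} on modular triples; the only thing it needs that we are not given is the matroid itself, so the first job is to manufacture $M$ and the chains from the pivot hypothesis. I would start exactly as in the proof of Proposition~\ref{prop:pfmatroid}: fix a maximal ideal $J$ of $R$, let $\phi\colon R\to k:=R/J$ be the quotient map, and set $A:=[I\ D]$ with the identity block on $X$. Since $\phi(A)[X,X]=I$ is invertible over the field $k$, the matrix $\phi(A)$ represents a matroid $M$ on $E:=X\cup Y$, and the bases of $M$ are exactly the sets obtained from $X$ by basis exchanges. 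The key claim is that \emph{for every basis $B$ of $M$ there is a sequence of pivots carrying $D$ to a matrix $D_B$, indexed by $B$ and $E\setminus B$, with every entry in $G\cup\{0\}$ and with $\rowspan([I\ D_B])=\rowspan([I\ D])=:C$.} One proves this by following a sequence of basis exchanges from $X$ to $B$: at each step the entry one must pivot on is nonzero in $k$ (because the two sets related by the exchange are bases of $M$), hence nonzero in $R$, hence --- being in $G\cup\{0\}$ by hypothesis --- a unit, so the pivot is legitimate, and the resulting matrix is again all in $G\cup\{0\}$ by hypothesis. That a pivot preserves the row span is essentially the proof of Lemma~\ref{lem:skewpivot} (a pivot of $D$ amounts to left multiplication of $[I\ D]$ by an invertible matrix followed by a column exchange); the base case is $D_X=D$, which lies in $G\cup\{0\}$ by hypothesis.

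Next I would read off the chains needed for Theorem~\ref{thm:tutcondition}. Because $\phi$ sends every unit of $R$ to a nonzero element of $k$, the support in $R^E$ of the $e$-th row of $[I\ D_B]$ (for $e\in B$) coincides with the support over $k$ of the $e$-th row of $\phi([I\ D_B])$, which is a reduced representation of $M$ relative to $B$; hence that support is the $B$-fundamental cocircuit of $M$ containing $e$, and the row itself is a $G$-primitive chain of $C$. Every cocircuit $W$ of $M$ is a $B$-fundamental cocircuit for a suitable basis $B$, so we may choose a $G$-primitive $a^W\in C$ with $\|a^W\|=W$ of this form. A one-line computation shows that any two $G$-primitive chains of $C$ supported on the same cocircuit $W$ are $G$-multiples of each other: taking $B$ with $W\cap B=\{e\}$ and expanding such a chain $c$ in $\rowspan([I\ D_B])$, its $B$-coordinates force $c=c_e\cdot(\text{row }e)$. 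Combined with $C=\rowspan([I\ D_B])$ for every basis $B$, this yields $C=\bigl\{\sum_{W\in\mathcal C^*}r_W a^W:r_W\in R\bigr\}$, so $C$ is presented exactly as in Theorem~\ref{thm:tutcondition}.

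It then remains to verify~\eqref{eq:addtriple} for an arbitrary modular triple $W,W',W''$ of cocircuits of $M$. Put $S:=E\setminus(W\cup W'\cup W'')$, pick $e\in W\setminus W'$ and $f\in W'\setminus W$, and let $B:=B_S\cup\{e,f\}$ where $B_S$ is a maximal independent subset of $S$. Since $\rank(M\contract S)=2$ and $\{e,f\}$ is independent in $M\contract S$ (a circuit cannot meet a cocircuit in exactly one element, so none of $\{e\},\{f\},\{e,f\}$ is a circuit there), $B$ is a basis of $M$, $W=C^*_M(e,B)$ and $W'=C^*_M(f,B)$. Now $W''$ is disjoint from $S\supseteq B\setminus\{e,f\}$ and meets every basis, so $W''\cap B$ is a nonempty subset of $\{e,f\}$; it cannot be a singleton, since then $W''$ would equal $W$ or $W'$, so $W''\cap B=\{e,f\}$. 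Expanding the chosen $a^{W''}$ in $\rowspan([I\ D_B])$, its $B$-coordinates vanish off $\{e,f\}$, so $a^{W''}$ is a combination of the $e$-row and $f$-row of $[I\ D_B]$ with coefficients $(a^{W''})_e,(a^{W''})_f\in G$; rewriting those two rows as $G$-multiples of $a^W$ and $a^{W'}$ gives the required identity $p\,a^W+p'\,a^{W'}+p''\,a^{W''}=0$ with $p,p',p''\in G$. Theorem~\ref{thm:tutcondition} then yields that $C=\rowspan([I\ D])$ is a $\parf$-chain group (and, as a bonus, $M(C)=M$).

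The step I expect to demand the most care is the bookkeeping in the first two paragraphs: making sure that along the chosen exchange sequence ``nonzero in $R$'' and ``nonzero in $k$'' really agree, so that every pivot is legal and $\phi$ preserves the supports of the rows we extract, and that a pivot of $D$ in the sense of Definition~\ref{def:skewpivot} simultaneously corresponds to a basis exchange in $M$ and to an invertible row operation on $[I\ D]$. By contrast, the modular-triple verification is short once one notices that the third cocircuit of the triple meets the chosen basis in exactly the two ``active'' elements, so no further pivoting is needed to express $a^{W''}$.
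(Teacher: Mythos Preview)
Your route via Theorem~\ref{thm:tutcondition} is sound for the commutative case, but it leans on commutativity in a way the paper's proof does not. The proposition lives in Section~\ref{sec:chaingr} and is invoked later for genuinely noncommutative $\parf$ (for instance $\parf(n,\field)$ in the proof of Theorem~\ref{thm:multilinskew} and $\parf_3$ in Theorem~\ref{thm:nonreprep}), so despite the wording it is meant to cover skew partial fields. Your first step---quotient by a maximal two-sided ideal $J$ and use that $R/J$ is a field, exactly as in Proposition~\ref{prop:pfmatroid}---fails there: for noncommutative $R$ the quotient by a maximal two-sided ideal is only a simple ring, not a skew field (e.g.\ $\matring(2,\Q)$ is already simple, so $J=0$ and $R/J=R$). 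You therefore cannot manufacture the matroid $M$ this way, and everything downstream in your plan depends on having $M$ in hand.

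The paper's argument is both shorter and commutativity-free: it never names a matroid. Instead it assumes some elementary chain $c\in C:=\rowspan([I\ D])$ is not an $R$-multiple of a $G$-primitive chain, and among all $D'$ reachable from $D$ by pivots picks one (with row index set $X'$) minimizing $s:=|X'\cap\|c\||$. One checks $s\ge2$ (if $s\le1$ then $c$ is already a multiple of a row of $[I\ D']$, which is $G$-primitive by hypothesis), picks $x\in X'\cap\|c\|$, uses elementarity of $c$ to find $y\in\|a^x\|\setminus\|c\|$ where $a^x$ is row $x$ of $[I\ D']$, observes $D'_{xy}\in G$, and pivots on $xy$ to strictly decrease $s$, contradicting minimality. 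If you are content to prove only the commutative case, your approach is a legitimate alternative and has the pleasant side effect of identifying $M(C)$ explicitly; the trade-off is length and the loss of the skew generality the proposition is designed for.
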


\begin{proof}
	Suppose not. Let $c \in \rowspan([I\ D])$ be an elementary, non-primitive chain on $X\cup Y$. Let $D'$ be an $X'\times Y'$ matrix, obtained from $D$ through pivots, such that $s := |X'\cap \|c\| |$ is minimal. Clearly $\rowspan([I\ D]) = \rowspan([I\ D'])$, so $s > 0$. In fact, $s \geq 2$, otherwise $c$ is a multiple of a row of $[I\ D']$. Let $x \in X'\cap\|c\|$, and let $a^x$ be the corresponding row of $[I\ D']$. Since $\|c\|$ is elementary, there is an element $y \in \|a^x\| \setminus \|c\|$. But $D'_{xy} \in G$, so the $X'' \times Y''$ matrix $D'' := (D')^{xy}$ is such that $|X'' \cap \|c\| | < s$, a contradiction.
\end{proof}

Suppose the $X'\times Y'$ matrix $D'$ was obtained from the $X\times Y$ matrix $D$ by a sequence of pivots. Then $[I\ D'] = F[I\ D]$, where $F = ([I\ D][X,X'])^{-1}$. It follows that, to check whether a matrix is a strong $\parf$-matrix, we only need to test if multiplication with each choice of $F$ yields a matrix with entries in $G$. 

The following theorem finalizes the link between commutative and noncommutative $\parf$-representable matroids.
\begin{theorem}\label{thm:skewpfinvertible}
  Let $\parf$ be a skew partial field, and $A$ an $X\times Y$ nondegenerate weak $\parf$-matrix. Then $B$ is a basis of $M(\rowspan(A))$ if and only if $A[X,B]$ is invertible.
\end{theorem}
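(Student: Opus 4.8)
The plan is to reduce the statement to the corresponding fact about chain groups that is already available. Recall that $\rowspan(A)$ is a $\parf$-chain group $C$ with underlying matroid $M := M(C)$ on $Y$, and $M$ has rank $|X|$ by nondegeneracy. Write $r := |X|$. The heart of the matter is to relate invertibility of the $X \times B$ submatrix $A[X,B]$ to $B$ being a basis of $M$, and the natural bridge is the fact that a basis $B$ of $M$ is characterized combinatorially by: every element of $B$ lies in exactly one $B$-fundamental cocircuit, and the matrix of primitive chains supported on those cocircuits has a particularly simple shape after a change of row basis.

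First I would prove the ``only if'' direction. Suppose $B$ is a basis of $M$. For each $e \in B$ let $a^e$ be a $G$-primitive chain of $C$ whose support is the $B$-fundamental cocircuit of $M$ containing $e$; such chains exist because $C$ is a $\parf$-chain group and every cocircuit is the support of an elementary chain, which can be scaled to be $G$-primitive. Since $a^e \in \rowspan(A)$, write $a^e = f^e A$ with $f^e \in \ring^X$, and let $F$ be the $B \times X$ matrix with rows $f^e$. By the defining property of $B$-fundamental cocircuits, $(a^e)_e \neq 0$ while $(a^e)_{e'} = 0$ for $e' \in B \setminus \{e\}$; after rescaling each $a^e$ by an element of $G$ we may assume $(a^e)_e = 1$, so $(FA)[B,B] = I_B$. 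This is exactly the argument used in the proof of Lemma~\ref{lem:weakskewstrong}. Hence $FA$ is an invertible matrix restricted to columns $B$, i.e.\ $(FA)[B,B]$ is invertible; since $FA = FA$ has $F$ as a left factor and $(FA)[B,B] = F \cdot A[X,B]$ is invertible (here using that $|B| = r = |X|$, so all matrices in sight are $r \times r$), it follows that both $F$ and $A[X,B]$ are invertible over $\ring$. (One should be slightly careful here: over a noncommutative ring a product of square matrices being invertible forces each factor to be invertible only when the matrices are finite-dimensional square matrices over a ring, which holds since $\ring$ has an identity and these are $r\times r$; alternatively, deduce invertibility of $F$ first from $M$ having rank $r$ and the rows $f^e$ being a generating set, then conclude.)

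For the ``if'' direction, suppose $A[X,B]$ is invertible; I want to show $B$ is a basis of $M$. Set $F := (A[X,B])^{-1}$, an invertible $X \times X$ matrix, and let $A' := FA$; by Lemma~\ref{lem:weakskewmatops}, $A'$ is again a weak $\parf$-matrix with $\rowspan(A') = \rowspan(A) = C$, and $A'[X,B] = I_B$ after identifying $X$ with $B$ via the bijection induced by the invertible submatrix. Now $[A'] = [I_B \ D]$ in an appropriate column order, so $C = \rowspan([I_B\ D])$ with $B$ playing the role of the identity block. The rows of $[I_B\ D]$ are then chains whose supports each meet $B$ in exactly one element; by the argument in Proposition~\ref{prop:Pchaingrouptest} (or directly: a row $a^e$ cannot properly contain the support of a nonzero chain, since such a chain would be a $\ring$-combination of rows of $[I_B\ D]$ and its restriction to $B$ would have at most one nonzero entry, forcing it to be a multiple of $a^e$) each such row is elementary, so its support is a cocircuit of $M$. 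These $|B| = r$ cocircuits have the property that the $i$th one meets $B$ exactly in its $i$th element; therefore $B$ is coindependent-complement, i.e.\ $B$ is a basis: no circuit of $M$ is contained in $B$ because a circuit meets every cocircuit it meets in at least two elements, yet each of our cocircuits meets $B$ in one element, so $B$ contains no cocircuit-blocking configuration — more cleanly, the complements $E \setminus \|a^e\|$ are hyperplanes, each avoiding exactly $e \in B$, so $B$ is a transversal of the cocircuits and thus a basis, matching $|B| = \rank(M)$.

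The main obstacle I anticipate is bookkeeping around noncommutativity: ensuring that ``invertible submatrix $\Rightarrow$ can pivot/change basis'' goes through with left modules and row operations on the correct side, and that the identification of $X$ with $B$ via the invertible submatrix does not secretly use a determinant. This is handled by leaning on Lemma~\ref{lem:weakskewmatops} and Lemma~\ref{lem:weakskewstrong}, which were proved precisely to make these manipulations legitimate over a skew partial field; everything else is a repackaging of the fundamental-cocircuit/generator-matrix correspondence established in Lemma~\ref{lem:generator} and Definition~\ref{def:rowspan}.
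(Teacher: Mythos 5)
Your proposal is correct and takes essentially the same route as the paper: the ``only if'' direction is exactly Lemma \ref{lem:weakskewstrong}, and the ``if'' direction multiplies by $F = (A[X,B])^{-1}$, notes that the rows of $FA$ are elementary so their supports are cocircuits meeting $B$ in exactly one element, and concludes from the fact that a circuit and a cocircuit cannot intersect in exactly one element (the paper packages this as a contradiction with a circuit contained in $B$). Your parenthetical worry about factors of an invertible product is unnecessary---simply citing Lemma \ref{lem:weakskewstrong}, as the paper does, already gives invertibility of $A[X,B]$ for every basis $B$.
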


\begin{proof}
  We have already seen that $A[X,B]$ is invertible for every basis $B$. Suppose the converse does not hold, so there is a $B\subseteq Y$ such that $A[X,B]$ is invertible, but $B$ is not a basis. Let $F$ be the inverse of $A[X,B]$, and consider $A' := FA$. Since $F$ is invertible, it follows that $\rowspan(A') = \rowspan(A)$. Let $C\subseteq B$ be a circuit, and pick an $e \in C$. Let $C' := \|A'[e,E]\|$, the support of the $e$th row of $A'$. Clearly $A'[e,E]$ is elementary, so $C'$ is a cocircuit. Then $|C\cap C'| = 1$, a contradiction. Hence $B$ contains no circuit, so $B$ is independent, and hence a basis.
\end{proof}

It follows that Definition~\ref{def:chainmatroid} is indeed a generalization of Definition~\ref{def:representable}, and that Definition~\ref{def:skewPmatrix} is indeed a generalization of Definition \ref{def:pmat}. We can write $M[A] := M(\rowspan(A))$ for a weak $\parf$-matrix $A$.

Finally, it is possible to incorporate \emph{column scaling} into the theory of chain groups. The straightforward proof of the following result is omitted.

\begin{proposition}\label{pro:colscale}
	Let $\parf = (R,G)$ be a skew partial field, $C$ a $\parf$-chain group on $E$, and $g\in G$. Define $C'$ as follows:
	\begin{align}
		C' := \big\{ c' \in R^E : \textrm{ there exists } c \in C \textrm{ such that } & c'_f = c_f \textrm{ for } f\in E\setminus e \notag \\
		                       & \textrm{ and } c'_e = c_e g \big\}.
	\end{align}
	Then $C'$ is a $\parf$-chain group, and $M(C) = M(C')$.
\end{proposition}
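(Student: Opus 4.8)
The plan is to show that column scaling by an element $g \in G$ at coordinate $e$ yields a bijection $\psi: C \to C'$ that is an isomorphism of $R$-chain groups, and then to track how it behaves on supports and primitivity. First I would observe that the map $\psi$ sending $c \in C$ to the vector $c'$ with $c'_f = c_f$ for $f \neq e$ and $c'_e = c_e g$ is well-defined (by the displayed definition $C'$ is exactly $\psi(C)$), is additive since addition is componentwise and right multiplication by $g$ distributes over it, and commutes with left multiplication by any $r \in R$ since $(rc)_e g = r(c_e g)$ by associativity. Hence $\psi$ is an $R$-module isomorphism from $C$ onto $C'$; in particular $C'$ satisfies the three chain-group axioms, so $C'$ is an $R$-chain group. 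Moreover $\psi$ preserves supports: since $g$ is a unit, $c_e g \neq 0 \iff c_e \neq 0$, so $\|\psi(c)\| = \|c\|$ for every $c \in C$. Consequently $c$ is elementary in $C$ if and only if $\psi(c)$ is elementary in $C'$, and therefore $M(C) = M(C')$ by Theorem~\ref{thm:chainmatroid} (both have the same cocircuit collection).

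It remains to check that $C'$ is a $\parf$-chain group, i.e.\ that every elementary chain of $C'$ is an $R$-multiple of a $G$-primitive chain of $C'$. Take an elementary $c' \in C'$; then $c := \psi^{-1}(c')$ is elementary in $C$, so $c = r a$ for some $G$-primitive $a \in C$ and $r \in R$. Applying $\psi$, which commutes with left multiplication by $r$, gives $c' = \psi(c) = r\,\psi(a)$, so it suffices to show $\psi(a)$ is $G$-primitive. For $f \neq e$ we have $\psi(a)_f = a_f \in G \cup \{0\}$; for the coordinate $e$ we have $\psi(a)_e = a_e g$, and since $a_e \in G \cup \{0\}$ and $g \in G$, the product $a_e g$ lies in $G \cup \{0\}$ (it is in $G$ if $a_e \in G$, and is $0$ if $a_e = 0$). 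Hence $\psi(a) \in (G \cup \{0\})^E$ is $G$-primitive, and $C'$ is a $\parf$-chain group.

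There is no real obstacle here; the only point requiring a little care is that left scalar multiplication and right multiplication by $g$ genuinely commute in the relevant sense — that is, $\psi$ is $R$-linear — which uses only associativity of ring multiplication and does not require commutativity. The argument is entirely formal, which is presumably why the authors chose to omit it. If one wanted to phrase it more concretely in terms of a generator matrix $A = [I\ D]$ for $C$, column scaling amounts to right-multiplying $A$ by the diagonal matrix that is the identity except for a $g$ in the $(e,e)$ entry; this is an invertible column operation, so the matroid is unchanged, and one then verifies primitivity of the scaled fundamental cocircuit chains exactly as above. Either formulation closes the proof.
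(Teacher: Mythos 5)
Your proof is correct; the paper in fact omits its own proof as ``straightforward,'' and your argument -- that right-scaling coordinate $e$ by the unit $g$ gives a support-preserving $R$-module isomorphism $\psi\colon C\to C'$ that carries elementary chains to elementary chains and $G$-primitive chains to $G$-primitive chains (using associativity for $R$-linearity and closure of $G$ under multiplication for primitivity) -- is exactly the intended one. The only cosmetic point is that the identification of cocircuits via Theorem~\ref{thm:chainmatroid} formally presupposes that $C'$ is a $\parf$-chain group, which you establish in the following paragraph, so you might simply reorder those two steps.
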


\subsection{Examples}\label{ssec:examples}
In this subsection we will try to represent three matroids over a skew partial field. First up is the non-Pappus matroid, of which a geometric representation is shown in  Figure \ref{fig:nonpappus}. It is well-known that this matroid is representable over skew fields but not over any commutative field (see also Oxley \cite[Example 1.5.14]{oxley}). A nice representation matrix over a skew field is
\begin{align}
	\kbordermatrix{ & 1 & 2 & 3 & 4 & 5 & 6 & 7 & 8 & 9\\
	& 1 & 0 & 0 & 1 & a & 1 & a & ab & ab \\
	& 0 & 1 & 0 & 1 & 1 & b & ba & b & ba \\
	& 0 & 0 & 1 & 1 & 1 & 1 & 1 & 1 & 1 }, \label{eq:nonpaprep}
\end{align}
where $a$ and $b$ are such that $ab\neq ba$. Clearly any skew field $\field$ can be viewed as a skew partial field $(\field, \field^*)$, so in principle we are done. However, we will describe a slightly more interesting representation which will be relevant for the next section.

\begin{figure}[tp]
  \begin{center}
    \includegraphics{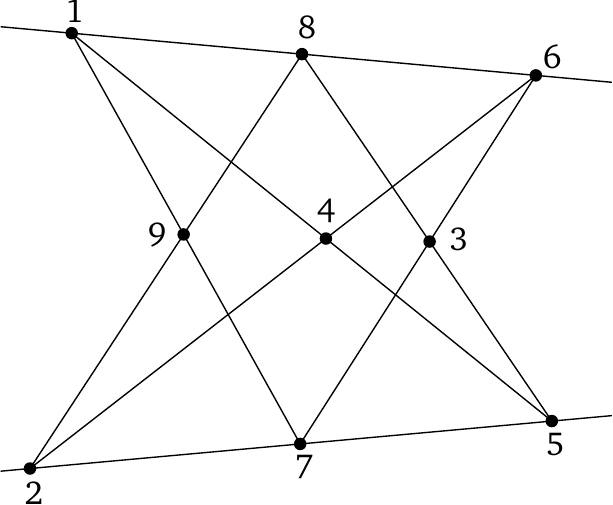}
  \end{center}
  \caption{The Non-Pappus matroid}\label{fig:nonpappus}
\end{figure}

\begin{example}\label{ex:nonpapchain}
	Consider the ring $\matring(2,\Q)$ of $2\times 2$ matrices over $\Q$, with usual matrix addition and multiplication, and the group $\GL(2,\Q)$ of invertible $2\times 2$ matrices (that is, $\GL(2,\Q) = (\matring(2,\Q))^*$). Define the partial field $\parf(2,\Q) := (\matring(2,\Q), \GL(2,\Q))$, and consider the following matrix over $\parf(2,\Q)$, obtained by substituting appropriate $2\times 2$ matrices for $a$ and $b$ in \eqref{eq:nonpaprep}:
	\begin{align}
		\renewcommand{\kbrdelim}{.}
		A := \kbordermatrix{ & 1 & 2 & 3 & 4 & 5\\
		& \mI & \mO & \mO & \mI & \mm{2}{2}{0}{2}\\
		& \mO & \mI & \mO & \mI & \mI\\
		& \mO & \mO & \mI & \mI & \mI\\
	  } \notag\\
	  \renewcommand{\kbldelim}{.}
	  \renewcommand{\kbrdelim}{]}
		\kbordermatrix{ & 6 & 7 & 8 & 9\\
	    & \mI & \mm{2}{2}{0}{2} & \mm{0}{6}{-6}{6} & \mm{0}{6}{-6}{6} \\
	    & \mm{3}{0}{-3}{3} & \mm{6}{6}{-6}{0} & \mm{3}{0}{-3}{3} & \mm{6}{6}{-6}{0} \\
	    & \mI & \mI & \mI & \mI
	  }\label{eq:nonpapskewmat}
	  \renewcommand{\kbldelim}{[}
	\end{align}
	
\end{example}

\begin{theorem}
  Let $A$ be the matrix from Example \ref{ex:nonpapchain}. The chain group $C := \rowspan(A)$ is a $\parf(2,\Q)$-chain group, and $M(C)$ is the non-Pappus matroid.
\end{theorem}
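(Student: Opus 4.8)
The plan is to verify the two assertions separately: that $C = \rowspan(A)$ is a $\parf(2,\Q)$-chain group, and that the matroid $M(C)$ is the non-Pappus matroid. For the first part, the natural tool is Proposition~\ref{prop:Pchaingrouptest}: since $A = [I\ D]$ for a suitable $3\times 6$ matrix $D$ over $\matring(2,\Q)$, it suffices to check that every matrix obtained from $D$ by a sequence of pivots has all entries in $\GL(2,\Q)\cup\{\mO\}$. A pivot in this setting is an ordinary matrix pivot over the (noncommutative) ring $\matring(2,\Q)$, so concretely one replaces blocks according to the rule $D'_{uv} = D_{uv} - D_{uy}(D_{xy})^{-1}D_{xv}$; one needs to run through all pivot sequences and confirm each resulting $2\times 2$ block is either zero or invertible over $\Q$. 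Because $D$ has only $3$ rows, there are relatively few reachable pivoted matrices up to the obvious symmetries, and the entries are explicit small-integer $2\times 2$ matrices, so this is a finite (if tedious) check. I would organize it by first listing the distinct $3\times 3$ "basis" submatrices $A[X,B]$ that are invertible, and then verifying that the corresponding $FA = (A[X,B])^{-1}A$ has primitive entries; equivalently, verifying that each single pivot keeps us inside the primitive world and invoking the proposition.

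For the second part, I would identify $M(C)$ with the non-Pappus matroid by comparing bases. By Theorem~\ref{thm:skewpfinvertible}, $B\subseteq\{1,\dots,9\}$ with $|B|=3$ is a basis of $M(C)$ if and only if the $3\times 3$ block matrix $A[X,B]$ (a $6\times 6$ matrix over $\Q$ after expanding the blocks) is invertible. So the task reduces to: for each of the $\binom{9}{3}=84$ triples, decide whether the corresponding $6\times 6$ rational matrix is nonsingular, and check that the resulting family of bases coincides with that of the non-Pappus matroid. A cleaner route is to exploit that the matrix~\eqref{eq:nonpapskewmat} was obtained from the skew-field representation~\eqref{eq:nonpaprep} by substituting $a\mapsto\mm{2}{2}{0}{2}$ and $b\mapsto\mm{3}{0}{-3}{3}$: since these two $2\times 2$ matrices do not commute (indeed $ab = \mm{-6}{12}{-6}{6}$ while $ba = \mm{6}{6}{-6}{6}$, so $ab\neq ba$), one can argue that the $3\times 3$ block determinant being zero/nonzero over $\matring(2,\Q)$ tracks exactly the corresponding condition in the skew field generated by $a,b$ inside $\matring(2,\Q)$, and the latter is a skew field faithfully representing the non-Pappus matroid by the classical construction. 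Thus it would suffice to (i) recall or cite that~\eqref{eq:nonpaprep} represents the non-Pappus matroid over any skew field in which $ab\neq ba$, and (ii) check that no "accidental" dependency is introduced by the specific numeric choice — i.e., that every $3\times 3$ block minor that is nonzero as a noncommutative polynomial in $a,b$ remains nonzero after the substitution. Point (ii) is a finite check on those block minors whose vanishing would require a polynomial identity in $a,b$ not forced by the matroid structure.

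The main obstacle I anticipate is precisely point (ii) together with the corresponding primitivity check in the first part: one must be sure that the concrete integer matrices chosen for $a$ and $b$ avoid all the degenerate loci, i.e. that no $3\times 3$ block $A[X,B]$ that "should" be nonsingular becomes singular, and symmetrically that no pivot produces a non-invertible nonzero $2\times 2$ block. Since the generic (skew-field) behaviour is governed by finitely many noncommutative polynomials in $a,b$, this amounts to confirming that the substitution $a=\mm{2}{2}{0}{2}$, $b=\mm{3}{0}{-3}{3}$ is a common non-root of all the relevant polynomials; the upper-triangular shape of $a$ and the special structure of $b$ make the determinants factor nicely, but the bookkeeping over all $84$ triples (many of which are ruled out immediately because two columns are proportional or three are coplanar in the obvious way) is where the real work lies. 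I would therefore present the proof as: (1) reduce to a finite pivot/minor check via Proposition~\ref{prop:Pchaingrouptest} and Theorem~\ref{thm:skewpfinvertible}; (2) observe the substitution is from a skew-field representation with $ab\neq ba$; (3) carry out (or summarize) the finite verification that the numeric specialization is nondegenerate, concluding that $C$ is a $\parf(2,\Q)$-chain group and $M(C)$ is the non-Pappus matroid.
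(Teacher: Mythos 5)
Your proposal takes essentially the same route as the paper, which omits the proof precisely because it reduces, via Proposition \ref{prop:Pchaingrouptest} (or Theorem \ref{thm:tutcondition}) together with Theorem \ref{thm:skewpfinvertible}, to a finite pivot/basis check on explicit $2\times 2$ blocks that is best carried out by computer. Two small cautions: your products are miscomputed (in fact $ab = \mm{0}{6}{-6}{6}$ and $ba = \mm{6}{6}{-6}{0}$, exactly the blocks appearing in columns $8$ and $7$ of $A$, and indeed $ab\neq ba$), and the subring of $\matring(2,\Q)$ generated by $a$ and $b$ is \emph{not} a skew field (e.g.\ $a-2\mI$ is nonzero and singular), so your ``cleaner route'' cannot bypass the explicit nondegeneracy verification — but since you fall back on the finite check anyway, the argument stands.
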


We omit the proof, which can be based on either Theorem \ref{thm:tutcondition} or Proposition \ref{prop:Pchaingrouptest}, and which is best carried out by a computer.

\begin{figure}[tp]
  \begin{center}
    \includegraphics{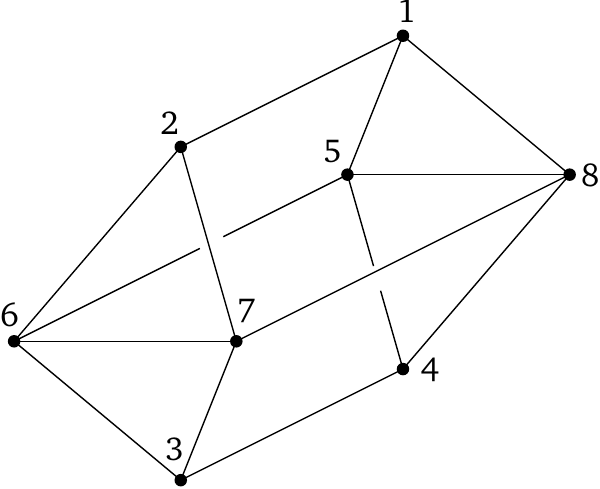}
  \end{center}
  \caption{The V\'amos matroid}\label{fig:vamos}
\end{figure}

Next, we consider the famous V\'amos matroid, depicted in Figure \ref{fig:vamos}. We will show that it is non-representable even over skew partial fields.

\begin{theorem}\label{thm:vamos}
	The V\'amos matroid, $V_8$, is not representable over any skew partial field.
\end{theorem}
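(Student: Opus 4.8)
The plan is to derive a contradiction from an assumed $\parf$-chain group representation of $V_8$ by exploiting the four-point planes of the Vámos matroid together with the modular-triple machinery of Theorem~\ref{thm:tutcondition}. Recall the standard labelling: $V_8$ has ground set $\{1,\ldots,8\}$, it is paving of rank $4$, and its non-spanning circuits are exactly five $4$-element sets, each a ``plane''; the key feature is that $\{1,2,3,4\}$, $\{1,2,5,6\}$, $\{3,4,5,6\}$, $\{1,2,7,8\}$, $\{3,4,7,8\}$ are circuit-hyperplanes but $\{5,6,7,8\}$ is \emph{not} dependent. First I would pass to the dual picture via Theorem~\ref{thm:dualchaingr}: a $\parf$-chain group representation of $V_8$ gives a $\parf^\circ$-chain group representation of $V_8^*\cong V_8$ (the Vámos matroid is identically self-dual), so without loss of generality I may argue with cocircuits, which are the supports of elementary chains, or dually set up a generator matrix $A=[I\ D]$ in the form of Lemma~\ref{lem:tutgenerator} using a convenient basis, say $B=\{1,3,5,7\}$.

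Next I would normalise. Choosing $B=\{1,3,5,7\}$ as a basis, the generator matrix $[I\ D]$ has rows indexed by $1,3,5,7$ and the $B$-fundamental cocircuits determine the zero/nonzero pattern of $D$ from the known hyperplanes of $V_8$; standard column scaling (Proposition~\ref{pro:colscale}) and row scaling (Lemma~\ref{lem:weakskewmatops}) let me set a spanning-tree's worth of entries to $1$. I would then write down the handful of remaining unknown entries $x,y,z,\ldots\in G$ and translate each circuit-hyperplane of $V_8$ into the statement that a certain $2\times 2$-block or a certain linear relation among rows of $A$ is degenerate — concretely, that two columns of $A$, restricted to an appropriate pivoted matrix, are parallel, equivalently that some product of the unknowns equals $\pm1$ or that two unknown ratios coincide. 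The five dependent planes give five such constraints; the crucial sixth fact — that $\{5,6,7,8\}$ is \emph{independent} — gives a non-degeneracy inequation that the same unknowns must satisfy. The arithmetic will be noncommutative, so I would be careful to keep left/right multiplication straight and use that all the unknowns lie in the group $G$ (hence are invertible, and $-1\in G$), invoking Theorem~\ref{thm:tutcondition} on the modular triples among the relevant cocircuits to get that the coefficients in each three-term relation genuinely lie in $G$.

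The heart of the argument is that the five ``dependent plane'' equations, manipulated purely formally (cancelling invertible elements, never assuming commutativity), force a relation that collapses the pattern needed for $\{5,6,7,8\}$ to be independent — i.e. they imply a parallelism or rank drop on columns $5,6,7,8$ that contradicts $\{5,6,7,8\}\in\mathcal{B}_{V_8}$ via Theorem~\ref{thm:skewpfinvertible}. In effect one shows that over \emph{any} ring, commutative or not, the first five incidences of the Vámos configuration propagate to the sixth; this is the classical ``Vámos is not algebraic over the incidence relations'' phenomenon, and it survives the move to skew partial fields precisely because the only algebra used is cancellation of units, which Theorem~\ref{thm:tutcondition} guarantees is available. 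I expect the main obstacle to be bookkeeping: choosing the basis and the normalisation so that the five plane-equations take a tractable form, and then chasing the noncommutative cancellations without ever covertly using $pq=qp$. A secondary subtlety is making sure each three-term relation I use really comes from a modular triple of cocircuits of $V_8$ (so that Theorem~\ref{thm:tutcondition} applies and the coefficients are in $G$), which requires checking the rank condition $\rank(V_8\contract S)=2$ for the particular triples invoked — routine, but it must be done before the algebra can proceed.
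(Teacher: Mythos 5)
Your overall strategy is the paper's: fix a basis, normalise a $4\times 4$ matrix $D$ by row and column scaling, translate the five four-point planes into equations among unit entries, and contradict the independence of the sixth plane. But the proposal stops exactly where the proof lives. The assertion that ``the five dependent-plane equations, manipulated purely formally, force a relation that collapses the pattern needed for the sixth plane to be independent'' is precisely the content of the theorem restated in matrix language; nothing in your write-up derives it, and ``I expect the main obstacle to be bookkeeping'' is not a substitute for the noncommutative computation. In the paper that computation is made tractable by specific choices you have not made: the basis is taken to contain one full pair and one element of two other pairs (in the paper's labels, rows $\{1,2,5,7\}$, columns $\{3,4,6,8\}$), so that two of the circuits immediately force two entries of $D$ to vanish; after scaling there remain seven unknowns $a,\dots,g\in G$; the circuit $\{5,6,7,8\}$ forces $g=1$; the circuits $\{3,4,5,6\}$ and $\{3,4,7,8\}$ each yield a coefficient $r=(f-1)^{-1}(1-e)$ and $r'=(f-1)^{-1}(1-e)$, so $r=r'$ and $s=s'$; and the punchline is the explicit chain $(s,s,0,0,1,r,0,0)\in C^\perp$, whose support lies inside the independent plane, contradicting Theorem~\ref{thm:dualchaingr}. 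Your choice of a transversal basis $B=\{1,3,5,7\}$ would make every plane meet the basis in two elements, so none of the circuit constraints degenerates into a zero entry, and the resulting system of noncommutative conditions is substantially messier; the plan is not wrong, but you have not shown it closes up, and with that basis it is not clear it does so by ``cancellation of units'' alone.

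A secondary point: you route the three-term relations through Theorem~\ref{thm:tutcondition}, which obliges you (as you note) to verify modularity of the triples of cocircuits you use, and to pass to the dual to turn circuits into cocircuits. The paper avoids both steps: by Theorem~\ref{thm:dualchaingr}, every circuit of $M$ is the support of a $G$-primitive elementary chain of $C^\perp$, which directly gives, for each four-point plane, a dependency among the corresponding columns of $[I\ D]$ with coefficients in $G\cup\{0\}$. Adopting that mechanism would remove the modular-triple bookkeeping from your plan; what would still be missing is the actual derivation sketched above, which is the heart of the proof.
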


\begin{proof}
	Suppose, for a contradiction, that there exists a partial field $\parf = (R,G)$ over which $V_8$ has a representation. Let $D$ be a $\{1,2,5,7\}\times\{3,4,6,8\}$ matrix over $R$ such that $V_8 = M[I\ D]$. Let $C := \rowspan([I\ D])$. We will use the fact that, for each circuit $X$ of $M$, there is a chain $d \in C^\perp$ with $\|d\| = X$ and $c\cdot d = 0$ for all $c \in C$ (see Theorem \ref{thm:dualchaingr}).
	
	Since $\{1,2,5,6\}$ is a circuit, it follows that $D[7,6] = 0$. Since $\{1,2,7,8\}$ is a circuit, $D[5,8] = 0$. By row and column scaling, we may assume that there exist $a,b,c,d,e,f,g\in G$ such that
	\begin{align}
		D = \kbordermatrix{ & 3 & 4 & 6 & 8\\
											1 & 1 & 1 & 1 & 1\\
											2 & e & f & g & 1\\
											5 & c & d & 1 & 0\\
											7 & a & b & 0 & 1}.
	\end{align}
	Since $\{5,6,7,8\}$ is a circuit, there exist $k,l,m,n \in G$ such that
	\begin{align}
		\begin{bmatrix}
			0\\
			0\\
			1\\
			0
		\end{bmatrix}k +
		\begin{bmatrix}
			0\\
			0\\
			0\\
			1
		\end{bmatrix}l +
		\begin{bmatrix}
			1\\
			g\\
			1\\
			0
		\end{bmatrix}m +
		\begin{bmatrix}
			1\\
			1\\
			0\\
			1
		\end{bmatrix}n = 
		\begin{bmatrix}
			0\\
			0\\
			0\\
			0
		\end{bmatrix}.
	\end{align}
	It follows that $m = -n$, and hence that $g = 1$. Since $\{3,4,5,6\}$ is a circuit, there exist $p,q,r,s \in G$ such that
	\begin{align}
		\begin{bmatrix}
			0\\
			0\\
			1\\
			0
		\end{bmatrix}p +
		\begin{bmatrix}
			1\\
			e\\
			c\\
			a
		\end{bmatrix}q +
		\begin{bmatrix}
			1\\
			f\\
			d\\
			b
		\end{bmatrix}r +
		\begin{bmatrix}
			1\\
			1\\
			1\\
			0
		\end{bmatrix}s = 
		\begin{bmatrix}
			0\\
			0\\
			0\\
			0
		\end{bmatrix}.
	\end{align}
	We may assume $q = 1$. Then $1 + r + s = 0$, and $e + fr +s = 0$, from which we find $r = (f-1)^{-1} (1-e)$. Finally, $a + br = 0$.
	Since $\{3,4,7,8\}$ is a circuit, there exist $p',q',r',s' \in G$ such that
	\begin{align}
		\begin{bmatrix}
			0\\
			0\\
			0\\
			1
		\end{bmatrix}p' +
		\begin{bmatrix}
			1\\
			e\\
			c\\
			a
		\end{bmatrix}q' +
		\begin{bmatrix}
			1\\
			f\\
			d\\
			b
		\end{bmatrix}r' +
		\begin{bmatrix}
			1\\
			1\\
			0\\
			1
		\end{bmatrix}s' = 
		\begin{bmatrix}
			0\\
			0\\
			0\\
			0
		\end{bmatrix}.
	\end{align}
	We may assume $q' = 1$. Then $1 + r' + s' = 0$, and $e + fr' +s' = 0$, from which we find $r' = (f-1)^{-1} (1-e)$. Finally, $c + dr' = 0$.
	Note that $r' = r$ and $s' = s$. Now consider the chain
	\begin{align}
		c := \kbordermatrix{ & 1 & 2 & 5 & 7 & 3 & 4 & 6 & 8\\
		                    & s & s & 0 & 0 & 1 & r & 0 & 0}.
	\end{align}
	It is easily checked that $c \in C^\perp$, so $\|c\|$ contains a circuit. But $\{1,2,3,4\}$ is independent in $V_8$, a contradiction.	
\end{proof}

We verified that other notoriously non-representable matroids, such as the non-Desargues configuration and some relaxations of $P_8$, remain non-representable in our new setting. Nevertheless, we were able to find a matroid that is representable over a skew partial field, but not over any skew field. Hence our notion of representability properly extends the classical notion. We will now construct this matroid. 

For the remainder of this section, let $G := \{-1,1,-i,i,-j,j,-k,k\}$ be the quaternion group, i.e. the nonabelian group with relations $i^2 = j^2 = k^2 = ijk = -1$ and $(-1)^2 = 1$. Our construction involves Dowling group geometries, introduced by Dowling \cite{Dow72}. We will not give a formal definition of Dowling group geometries here, referring to Zaslavsky \cite{Zas89} for a thorough treatment.  For our purposes, it suffices to note that the rank-3 Dowling geometry of $G$, denoted by $Q_3(G)$, is the matroid $M[I\ A]$, where $A$ is the following matrix over the skew field $\quat$, the quaternions:

\begin{align}\label{mat:Q3mat}
	\renewcommand{\kbrdelim}{.}
	A := \kbordermatrix{ & a_1 & a_2 & a_3 & a_4 & a_5 & a_6 & a_7 & a_8  \\
   e_1 & -1 & -1 & -1 & -1 & -1 & -1 & -1 & -1 \\
   e_2 & 1  & -1 & i  & -i & j  & -j & k  & -k \\
   e_3 & 0  & 0  & 0  & 0  & 0  & 0  & 0  & 0
  } \notag\\
  \renewcommand{\kbldelim}{.}
  \renewcommand{\kbrdelim}{]}
	\kbordermatrix{ & b_1 & b_2 & \cdots & b_8  & c_1 & \cdots & c_7 & c_8\\
   & 0  & 0 & \cdots  & 0  & 1  & \cdots  & k & -k\\
   & -1 & -1 & \cdots & -1 & 0  & \cdots  & 0 & 0\\
   & 1  & -1 & \cdots  & -k & -1 & \cdots & -1 & -1
  }
  \renewcommand{\kbldelim}{[}
\end{align}


\begin{lemma}\label{lem:Q3Grep}
	Let $\parf$ be a skew partial field such that $Q_3(G)$ is representable over $\parf$. Then $G \subseteq \parf^*$, with $1$ and $-1$ of $G$ identified with $1$ and $-1$ of $\parf$. 
\end{lemma}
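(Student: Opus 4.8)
The plan is to exploit the structure of $Q_3(G)$ as a rank-$3$ Dowling geometry together with the representation matrix $A$ in \eqref{mat:Q3mat}, and to run the chain-group machinery developed above. Suppose $Q_3(G)$ is representable over a skew partial field $\parf = (R,G')$ (I write $G'$ for the group of the partial field, to avoid clashing with the quaternion group $G$). Fix a generator matrix $[I\ D]$ over $R$ for a $\parf$-chain group $C$ with $M(C) = Q_3(G)$, where the identity columns are placed on a basis of $Q_3(G)$ corresponding to $\{e_1,e_2,e_3\}$. After row scaling and column scaling (Lemma~\ref{lem:weakskewmatops}, Proposition~\ref{pro:colscale}) I would normalize $D$ so that it has the same zero pattern as $A$ and so that as many entries as possible equal $\pm 1$; in particular I would arrange that the three ``coordinate points'' $e_1,e_2,e_3$ and one joint line are in a standard position, mimicking the first few columns of $A$.

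The heart of the argument is that each three-point line of $Q_3(G)$ corresponds to a circuit, and by Theorem~\ref{thm:dualchaingr} (the orthogonal chain group) each circuit $X$ carries an elementary $G'$-primitive chain $d \in C^\perp$ with $\|d\| = X$; equivalently, any three columns of $[I\ D]$ that form a circuit satisfy a relation $p\,v_1 + p'\,v_2 + p''\,v_3 = 0$ with $p,p',p'' \in G'$. The lines of $Q_3(G)$ through pairs of the coordinate points are exactly what encode the group multiplication: reading off these dependencies for the columns $a_\ell$, $b_\ell$, $c_\ell$ forces the nonzero entries of $D$ (after the chosen normalization) to realize the eight group elements $\{\pm1,\pm i,\pm j,\pm k\}$ as a subset of $G'$, with the relations $i^2 = j^2 = k^2 = ijk = -1$ holding in $R$. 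Concretely, I would introduce names $\iota, \gamma, \kappa \in G'$ for the relevant entries in the columns playing the roles of $a_3, a_5, a_7$, use the line dependencies of $Q_3(G)$ to show $\iota^2 = \gamma^2 = \kappa^2 = -1$ and $\iota\gamma\kappa = -1$ in $R$, and check that the eight elements $\{\pm 1, \pm\iota, \pm\gamma, \pm\kappa\}$ are pairwise distinct (distinctness follows because distinct columns of a representation of a simple matroid are non-parallel, so the corresponding group elements cannot coincide). This exhibits the group homomorphism $G \to \parf^*$ sending $1 \mapsto 1$, $-1 \mapsto -1$, $i \mapsto \iota$, etc., and since a group homomorphism from $G$ that is injective on $\{1,-1\}$ is automatically injective (the only proper normal subgroup of $G$ containing more than $\{1\}$ is the center $\{1,-1\}$), we get $G \hookrightarrow \parf^*$ with $\pm 1$ identified correctly, as claimed.

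The main obstacle I anticipate is bookkeeping: pinning down exactly which columns of the (unknown) matrix $D$ can be normalized to $\pm 1$ without loss of generality, and then carefully extracting the multiplicative relations from the three-point-line dependencies in the correct order, since each new dependency is only available after the previous normalizations are fixed. A secondary subtlety is making sure the normalization is consistent — that after scaling rows to fix one line one still has enough column-scaling freedom to fix the next — which is precisely the kind of argument that underlies the standard coordinatization of Dowling geometries, and which here must be phrased purely in chain-group / primitive-chain language rather than via determinants. Once the relations are in hand, the identification $G \subseteq \parf^*$ is immediate, so the real work is entirely in the normalization and the reading-off of relations.
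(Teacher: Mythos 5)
Your overall strategy (normalize a generator matrix $[I\ D]$, then read off circuit dependencies via Theorem~\ref{thm:dualchaingr} to extract relations among the entries) is the same as the paper's, but there is a genuine gap at the decisive step. The three-point transversal circuits $\{a_k,b_l,c_m\}$ only ever produce \emph{multiplicative} relations of the form $z_m y_l x_k = 1$ among the normalized entries; they show that the entries form a subgroup of $\parf^*$ abstractly isomorphic to the quaternion group, but they do \emph{not} show that the element playing the role of $-1\in G$ is the ring element $-1$. Your assertion that the line dependencies force ``$\iota^2=\gamma^2=\kappa^2=\iota\gamma\kappa=-1$ in $R$'' silently assumes exactly this identification, which is the actual content of the lemma (in a general ring, or even in $\matring(n,\field)$, there are many units of multiplicative order two besides $-1$, so ``the unique involution of the embedded $Q_8$'' need not be $-1_R$). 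The paper handles this with a separate argument (Claim~\ref{cl:minusone}): it takes the cocircuit $E(Q_3(G))\setminus\{e_3,a_1\}$, observes that the sum of the first two rows of $[I\ D]$ is an elementary chain with that support, and uses the defining property of a $\parf$-chain group (elementary chains are scalar multiples of $G'$-primitive chains) to conclude that the entry $x_2-1$ lies in $\parf^*$; then $x_2^2=1$ gives $(x_2-1)(x_2+1)=0$, and invertibility of $x_2-1$ forces $x_2=-1$. Your write-up contains no step that yields invertibility of $1-x_2$ (or any other additive information), so the identification of $\pm1$ is unproved, and with it the lemma.

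A repair within your framework is available and does not need the cocircuit: the long line through $e_1,e_2$ contains $e_1,a_1,a_2$, so $\{e_1,a_1,a_2\}$ is a circuit; a $G'$-primitive chain $d\in C^\perp$ supported on it gives $d_{a_1}=-x_2 d_{a_2}$ and $d_{e_1}=(1-x_2)d_{a_2}$, whence $1-x_2\in G'$ is invertible, and the same factorization argument then yields $x_2=-1$. Either this or the paper's cocircuit argument must be added; the rest of your outline (normalization, multiplicative relations, distinctness from non-parallel columns) is sound, and the closing remark about homomorphisms of $Q_8$ injective on the center is then unnecessary, since injectivity already follows from the distinctness of the columns.
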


\begin{proof}
	Let $\parf$ be such that there exists a $\parf$-chain group $C$ representing $Q_3(G)$. By column scaling, we may assume that $C = \rowspan([I\ D])$, where $D$ is the following matrix:
	\begin{align}
			D := \kbordermatrix{ & a_1 & \cdots & a_8  & b_1 & \cdots & b_8  & c_1 & \cdots & c_8\\
		   e_1 & -1 & & -1 & 0 &  & 0 & z_1 &  & z_8\\
		   e_2 & x_1  &  & x_8  & -1 & & -1 & 0 &  & 0\\
		   e_3 & 0  & \cdots  & 0  & y_1  & \cdots  & y_8  & -1  & \cdots & -1
		}		
	\end{align}
	Moreover, by scaling the rows of $D$ we may assume $x_1 = y_1 = 1$.
	\begin{claim}
		$z_1 = 1$.
	\end{claim}
	\begin{subproof}
		Note that $\{a_1,b_1,c_1\}$ is a circuit of $Q_3(G)$. By Theorem \ref{thm:dualchaingr}, there must be elements $p,q,r \in \parf^*$ such that
		\begin{align}
			\begin{bmatrix}
				-1\\
				1\\
				0
			\end{bmatrix}p + 
			\begin{bmatrix}
				0\\
				-1\\
				1
			\end{bmatrix}q + 
			\begin{bmatrix}
				z_1\\
				0\\
				-1
			\end{bmatrix}r = 
			\begin{bmatrix}
				0\\
				0\\
				0
			\end{bmatrix}.
		\end{align}
		We may choose $p = 1$, from which it follows that $q = r = 1$, and hence $z_1 - 1 = 0$.
	\end{subproof}
	\begin{claim}
		If $k,l \in \{1,\ldots,8\}$ are such that $A[e_2,a_k] = (A[e_3,b_l])^{-1}$, then $x_k = y_l^{-1}$.
	\end{claim}
	\begin{subproof}
		Since $\{a_k, b_l, c_1\}$ is a circuit of $M$, there exist $p,q,r \in \parf^*$ such that
		\begin{align}
			\begin{bmatrix}
				-1\\
				x_k\\
				0
			\end{bmatrix}p + 
			\begin{bmatrix}
				0\\
				-1\\
				y_l
			\end{bmatrix}q + 
			\begin{bmatrix}
				1\\
				0\\
				-1
			\end{bmatrix}r = 
			\begin{bmatrix}
				0\\
				0\\
				0
			\end{bmatrix}.
		\end{align}
		We may choose $p = 1$, from which it follows that $r = 1$ and $q = x_k$. Hence $y_lx_k - 1 = 0$, and the claim follows.
	\end{subproof}
  Using symmetry and the fact that every element has an inverse, we conclude
	\begin{claim}
		$x_k = y_k = z_k$ for all $k \in \{1,\ldots,8\}$.
	\end{claim}
	Next, 
	\begin{claim}
		Let  $k,l,m \in \{1,\ldots,8\}$ be such that $A[e_1,c_m] A[e_3,b_l]A[e_2,a_k]  = 1$. Then $x_m x_l x_k = 1$.
	\end{claim}
	\begin{subproof}
		Since $\{a_k, b_l, c_m\}$ is a circuit of $M$, there exist $p,q,r \in \parf^*$ such that
		\begin{align}
			\begin{bmatrix}
				-1\\
				x_k\\
				0
			\end{bmatrix}p + 
			\begin{bmatrix}
				0\\
				-1\\
				x_l
			\end{bmatrix}q + 
			\begin{bmatrix}
				x_m\\
				0\\
				-1
			\end{bmatrix}r = 
			\begin{bmatrix}
				0\\
				0\\
				0
			\end{bmatrix}.
		\end{align}
		We may choose $p = 1$, from which it follows that $q = x_k$. From this, in turn, it follows that $r = x_l x_k$. Hence $x_mx_lx_k - 1 = 0$, and the claim follows.
	\end{subproof}
	Now $\{x_1, \ldots, x_8\}$ is isomorphic to $G$, as desired. Finally,
	\begin{claim}\label{cl:minusone}
		$x_2 = -1$.
	\end{claim}
	\begin{subproof}
		Note that $X := E(Q_3(G)) \setminus \{e_3,a_1\}$ is a cocircuit of $Q_3(G)$. Hence $\rowspan([I\ D])$ must contain a chain whose support equals $X$. Let $c$ be the sum of the first two rows of $[I\ D]$. Then $\|c\| = X$, so $c$ must be a $\parf^*$-multiple of a $\parf^*$-primitive chain $c'$. But since $c_{e_1} = 1 \in \parf^*$, we may pick $c' = c$. Now $c_{a_2} = x_2 - 1 \in \parf^*$. It follows that
		\begin{align}
			x_2^2 - 1 = 0\\
			(x_2-1)(x_2+1) = 0\\
			x_2 + 1 = 0,
		\end{align}
		as desired.
	\end{subproof}
	This concludes the proof.
\end{proof}

A second ingredient of our matroid is the ternary Reid geometry, $R_9$ (see Oxley \cite[Page 516]{oxley}), which has the following representation over $\GF(3)$:
\begin{align}
	\kbordermatrix{ & 1 & 2 & 3 & 4 & 5 & 6 & 7 & 8 & 9\\
		& 1 & 0 & 0 & 1 & 1 & 1 & 0 & 0 & 1\\
		& 0 & 1 & 0 & 1 & 1 & 2 & 1 & 1 & 0\\
		& 0 & 0 & 1 & 1 & 0 & 0 & 1 & 2 & 1
	}.
\end{align}

\begin{lemma}\label{lem:R9rep}
	Let $\parf = (R,G')$ be a skew partial field such that $R_9$ is representable over $\parf$. Then $R$ contains $\GF(3)$ as a subring.
\end{lemma}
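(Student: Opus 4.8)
The plan is to mimic the proofs of Theorem~\ref{thm:vamos} and Lemma~\ref{lem:Q3Grep}: fix a chain-group representation of $R_9$, normalize the representing matrix by row and column scaling, and then let the circuits of $R_9$ dictate the arithmetic of the ground ring.

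Suppose $\parf=(R,G')$ is a skew partial field with a $\parf$-chain group $C$ such that $M(C)=R_9$. Since $\{1,2,3\}$ is a basis of $R_9$, Lemma~\ref{lem:generator} gives a generator matrix of the form $[I\ D]$ with $D$ a $\{1,2,3\}\times\{4,\dots,9\}$ matrix whose rows are $G'$-primitive, so every entry of $D$ lies in $G'\cup\{0\}$. Reading off the $\{1,2,3\}$-fundamental circuits of $R_9$ from the displayed $\GF(3)$-matrix shows that $D[3,5]=D[3,6]=D[1,7]=D[1,8]=D[2,9]=0$ and that every other entry is in $G'$. By Lemma~\ref{lem:weakskewmatops} (left multiplication by an invertible diagonal matrix) and Proposition~\ref{pro:colscale} (column scaling by elements of $G'$) I may assume in addition that the column indexed by $4$ equals $(1,1,1)^T$, that $D[1,5]=D[1,6]=D[1,9]=1$, and that $D[2,7]=D[2,8]=1$; this leaves five unknown entries $a:=D[2,5]$, $b:=D[2,6]$, $d:=D[3,7]$, $c:=D[3,8]$, $f:=D[3,9]$, all in $G'$.

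Now I exploit the three-point lines of $R_9$. For a three-point circuit $\{i,j,k\}$, Theorem~\ref{thm:dualchaingr} yields $p,q,r\in G'$ with $v^ip+v^jq+v^kr=0$, where $v^e$ is the column of $[I\ D]$ indexed by $e$; after rescaling I may take one of $p,q,r$ equal to $1$, and the three coordinate equations then determine entries of $D$. I would first use the three lines $\{3,4,5\}$, $\{1,4,7\}$, $\{2,4,9\}$ through the point $4$ to obtain $a=d=f=1$; then the line $\{5,8,9\}$ to obtain $c=-1$; then the line $\{4,6,8\}$, using $c=-1$, to obtain $b=1+1$ (and $b\neq 1$); and finally the line $\{6,7,9\}$, using $d=f=1$, to obtain $b=-1$. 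Comparing the two expressions for $b$ gives $1+1+1=0$ in $R$; since $b\in G'\subseteq R^*$ we also have $1+1=b\neq0$, and $R$ is not the zero ring. Hence the subring of $R$ generated by $1$ is $\Z/3\Z\cong\GF(3)$, proving the lemma.

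There is no single hard step; the work is organizational. One must pick the circuits, and the order in which to apply them, so that each relation removes one unknown (the lines $\{5,8,9\}$, $\{4,6,8\}$, $\{6,7,9\}$ only become useful after $a$, $d$, $f$ have been set to $1$), and one must take care, since $R$ is noncommutative, to keep the scalars $p,q,r$ on the correct side throughout, both when extracting the relations and when rescaling them. As a consistency check, substituting $a=d=f=1$, $c=-1$, and $b=1+1$ back into $D$ returns precisely the displayed $\GF(3)$-representation of $R_9$, confirming that the chosen circuits are mutually compatible.
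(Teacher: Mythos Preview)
Your proposal is correct and follows essentially the same route as the paper: normalize a generator matrix $[I\ D]$ for $R_9$, then read off relations from the three-point lines to pin down the unknown entries and force $1+1+1=0$. The only difference is cosmetic ordering: the paper first uses $\{6,7,9\}$ (and symmetrically $\{5,8,9\}$) to set $D[2,6]=D[3,8]=-1$, and then a single application of $\{4,6,8\}$ yields $1+1+1=0$ directly, whereas you use $\{4,6,8\}$ first to get $b=1+1$ and then $\{6,7,9\}$ to get $b=-1$; your explicit check that $1+1=b\in G'$ is nonzero is a harmless extra (once $3=0$ and $1\neq 0$, nonvanishing of $2$ is automatic).
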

\begin{proof}
	Let $\parf$ be such that there exists a $\parf$-chain group $C$ representing $Q_3(G)$. By row and column scaling, we may assume that $C = \rowspan([I\ D])$, where $D$ is the following matrix:
	\begin{align}
			D := \kbordermatrix{ & 4 & 5 & 6 & 7 & 8 & 9\\
		   1 & 1 & 1 & 1 & 0 & 0 & 1\\
		   2 & 1  & v & w & 1 & 1 & 0\\ 
		   3 & 1  & 0 & 0 & x  & y & z
		}.
	\end{align}
	\begin{claim}
		$v = x = z = 1$.
	\end{claim}
	\begin{subproof}
		Note that $\{3,4,5\}$ is a circuit of $R_9$. By Theorem \ref{thm:dualchaingr}, there exist $p,q,r \in \parf^*$ such that
		\begin{align}
			\begin{bmatrix}
				0\\
				0\\
				1
			\end{bmatrix}p + 
			\begin{bmatrix}
				1\\
				1\\
				1
			\end{bmatrix}q+
			\begin{bmatrix}
				1\\
				v\\
				0
			\end{bmatrix}r =
			\begin{bmatrix}
				0\\
				0\\
				0
			\end{bmatrix}.
		\end{align}
		It follows that $q = -r$, and hence $1-v = 0$. Similarly $x = z = 1$.
	\end{subproof}
	\begin{claim}
		$w = y = -1$.
	\end{claim}
	\begin{subproof}
		Since $\{6,7,9\}$ is a circuit of $R_9$, there exist $p,q,r\in\parf^*$ such that
		\begin{align}
			\begin{bmatrix}
				1\\
				w\\
				0
			\end{bmatrix}p + 
			\begin{bmatrix}
				0\\
				1\\
				1
			\end{bmatrix}q+
			\begin{bmatrix}
				1\\
				0\\
				1
			\end{bmatrix}r =
			\begin{bmatrix}
				0\\
				0\\
				0
			\end{bmatrix}.
		\end{align}
		We may choose $p = 1$. It follows that $r = -1$, and from that it follows that $q = 1$. But now $w + 1 = 0$, as desired. Similarly $y = -1$.
	\end{subproof}
	Finally, since $\{4,6,8\}$ is a circuit, there exist $p,q,r \in \parf^*$ such that
	\begin{align}
		\begin{bmatrix}
			1\\
			1\\
			1
		\end{bmatrix}p + 
		\begin{bmatrix}
			1\\
			-1\\
			0
		\end{bmatrix}q+
		\begin{bmatrix}
			0\\
			1\\
			-1
		\end{bmatrix}r =
		\begin{bmatrix}
			0\\
			0\\
			0
		\end{bmatrix}.
	\end{align}
	We may choose $p = 1$. It follows that $q = -1$ and $r = 1$. But then $1 + 1 + 1 = 0$, and the result follows.
\end{proof}
Combining these two lemmas we find:
\begin{theorem}\label{thm:nonreprep}
	Let $M := R_9 \oplus Q_3(G)$. Then $M$ is representable over a skew partial field, but over no skew field.
\end{theorem}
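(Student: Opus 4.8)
The statement combines the two preceding lemmas, so the plan is to split the argument into three parts: a negative direction (no skew field), a positive direction (some skew partial field), and the reconciliation of the two via a suitable common skew partial field.

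First, for the negative direction I would suppose for contradiction that $M = R_9 \oplus Q_3(G)$ is representable over a skew field $\field$, viewed as the skew partial field $(\field,\field^*)$. Since direct sums are preserved by taking minors (deleting the ground set of one summand), both $R_9$ and $Q_3(G)$ would then be representable over $\field$. By Lemma~\ref{lem:R9rep}, $\field$ contains $\GF(3)$ as a subring; since $\field$ is a field this forces $\field$ to have characteristic $3$, so in particular $1+1 = -1 \neq 0$ in $\field$. On the other hand, by Lemma~\ref{lem:Q3Grep}, the quaternion group $G = \{\pm 1, \pm i, \pm j, \pm k\}$ embeds into $\field^*$ with the $\pm 1$ of $G$ matching the $\pm 1$ of $\field$. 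But then the relation $i^2 = -1$ together with $i^2 = (-i)^2$ gives, in a field, $i = \pm i$ with $i \neq -i$ only if $2i \neq 0$; more to the point, in characteristic $3$ one checks that the equations $i^2 = j^2 = k^2 = ijk = -1$ have no solution with $i,j,k$ distinct nonzero elements commuting in a field — e.g. $ij = k = (ji)$ forces $i,j$ to commute, whence $i^2 j^2 = (ij)^2 = k^2 = -1$ while $i^2 j^2 = (-1)(-1) = 1$, contradicting $-1 \neq 1$. This is the contradiction.

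Second, for the positive direction I need a single skew partial field $\parf$ over which both summands, and hence their direct sum, are representable. Direct sums are easy for chain groups: if $C_1$ is a $\parf$-chain group on $E_1$ representing $R_9$ and $C_2$ is a $\parf$-chain group on $E_2$ representing $Q_3(G)$, then the ``block-diagonal'' chain group $C$ on $E_1 \dotcup E_2$, consisting of all chains whose restriction to $E_i$ lies in $C_i$, is a $\parf$-chain group with $M(C) = M(C_1)\oplus M(C_2)$; this follows by checking that the elementary chains of $C$ are exactly the elementary chains of $C_1$ and of $C_2$, extended by zero, and applying Theorem~\ref{thm:chainmatroid}, or equivalently by taking a block-diagonal generator matrix $[I\ D_1]\oplus[I\ D_2]$ and invoking Proposition~\ref{prop:Pchaingrouptest} (pivots act within a block). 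So it suffices to exhibit one $\parf$ representing both. The natural candidate is to take the group ring or matrix ring that simultaneously contains $\GF(3)$ and the quaternion group $G$: for instance work inside the ring $R := \matring(2,\GF(3))$ (or $\matring(4,\GF(3))$, or $\GF(9)$-quaternions), which contains $\GF(3)$ as scalar matrices and contains a copy of the quaternion group $G$ as an explicit subgroup of $\GL(2,\GF(3))$ (the quaternion group of order $8$ does embed in $\GL(2,\GF(3))$, generated by $\mm{1}{1}{1}{-1}$ and $\mm{-1}{1}{1}{1}$ up to scaling, or by other standard $2\times 2$ matrices over $\GF(3)$). Let $\parf := (R, \langle -1, G, \GF(3)^*\rangle)$, the group generated inside $R^*$ by $-1$, the embedded quaternion group, and the nonzero scalars. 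Then $R_9$ is representable over $\parf$ by composing the $\GF(3)$-representation with the inclusion $\GF(3)\hookrightarrow R$ (Proposition~\ref{pro:pfhom}/Theorem~\ref{thm:chainhom}), and $Q_3(G)$ is representable over $\parf$ by composing the quaternionic representation~\eqref{mat:Q3mat} with the embedding $\quat \supseteq G \hookrightarrow R^*$ — more precisely, replacing each entry of the matrix in~\eqref{mat:Q3mat} (all of which lie in $G\cup\{0\}$) by its image in $R$, and checking this is again a valid $\parf$-matrix; since all relevant determinant-like conditions in Tutte's criterion (Theorem~\ref{thm:tutcondition}) are group relations among elements of $G$, they transfer verbatim.

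Finally, I combine: the block-diagonal construction above yields a $\parf$-chain group representing $R_9 \oplus Q_3(G) = M$, while the impossibility over skew fields was shown in the first paragraph. The main obstacle, I expect, is the positive direction — specifically verifying cleanly that one can embed both $\GF(3)$ and the quaternion group $G$ into the units of a single ring in a way that is compatible with the two given representations, and that the resulting matrix is genuinely a $\parf$-chain group rather than merely a weak $\parf$-matrix for some larger matroid. I would handle this by using Theorem~\ref{thm:chainhom} (homomorphisms of chain groups preserve the matroid) applied separately to each summand's representation, so that the only remaining content is the purely formal direct-sum lemma for chain groups, which is routine. An attractive alternative avoiding any explicit matrix search is to take $R$ to be the tensor product $\GF(3)\otimes_{\field_0} \mathbb{F}_0[G]$ for a suitable base, or simply the path/group algebra $\GF(3)[G]$ localized at the needed units — but the concrete $\matring(2,\GF(3))$ description is cleanest to write down and check.
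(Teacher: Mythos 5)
Your argument for the ``no skew field'' half only rules out \emph{commutative} fields. The step ``$ij=k=(ji)$ forces $i,j$ to commute, whence $(ij)^2=i^2j^2$'' presupposes commutativity, which is precisely what a skew field need not have; indeed $G$ embeds in $\quat^*$ with the $\pm 1$ identified, so the mere presence of a nonabelian quaternion group in $\field^*$ is not a contradiction. What has to be excluded is a \emph{noncommutative} skew field of characteristic $3$ containing $G$, and for that you need an additive computation, not a group-theoretic one. The paper's route: by Lemmas \ref{lem:R9rep} and \ref{lem:Q3Grep} the underlying ring contains the subring generated by $\GF(3)$ and $i,j$, i.e.\ (an image of) $R_3=\GF(3)[i,j,k]$, and in characteristic $3$ one has $(1+i+j)(1-i-j)=3=0$ with both factors nonzero; a skew field has no zero divisors. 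Some step of this kind is missing from your proof.

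The positive half also has two gaps. First, there is no ring homomorphism $\quat\rightarrow\matring(2,\GF(3))$, so Theorem \ref{thm:chainhom} cannot transport the representation \eqref{mat:Q3mat}, and your fallback claim that the conditions of Theorem \ref{thm:tutcondition} ``are group relations among elements of $G$'' and ``transfer verbatim'' is false: those conditions are \emph{additive} (three-term sums of primitive chains must vanish, and entries such as $1-g$ and $x_k-x_l$ must land in the chosen group), and additive identities do not transfer from characteristic $0$ to characteristic $3$; they must be re-verified over $R_3$, which is exactly the check the paper performs via Theorem \ref{thm:tutcondition} or Proposition \ref{prop:Pchaingrouptest}. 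Second, your partial field $(\matring(2,\GF(3)),\langle -1,G,\GF(3)^*\rangle)$ does not work: the group you take is just the image of the quaternion group, which sits inside $\SL(2,\GF(3))$, so all its elements have determinant $1$; but the elementary chain obtained by adding the first two rows of $[I\ A]$ (its support is the cocircuit $E(Q_3(G))\setminus\{e_3,a_1\}$, used in the proof of Lemma \ref{lem:Q3Grep}) has an entry $i-1$ of determinant $2$ alongside an entry $1$, so it cannot be written as $r c'$ with $c'$ primitive for your group, and $\rowspan([I\ A])$ is not a chain group over your $\parf$. The paper avoids both problems by taking the \emph{full} unit group, $\parf_3=(R_3,R_3^*)$ with $R_3\cong\matring(2,\GF(3))$. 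Your reduction to the two summands via minors and your direct-sum-of-chain-groups observation do agree with the paper and are fine.
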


\begin{proof}
	Consider the ring $R_3 := \GF(3)[i,j,k]$, where $i^2 = j^2 = k^2 = ijk = -1$, and the skew partial field $\parf_3 := (R_3,R_3^*)$. It can be checked, using either Theorem \ref{thm:tutcondition} or Proposition \ref{prop:Pchaingrouptest}, that the matrix $[I\ A]$, where $A$ is the matrix from \eqref{mat:Q3mat} interpreted as a matrix over $R_3$, is a $\parf_3$-matrix. Moreover, the direct sum of two $\parf$-chain groups is clearly a $\parf$-chain group. This proves the first half of the theorem.
	
	For the second half, assume $C$ is a $\parf$-chain group for some skew partial field $\parf = (R,G')$, such that $M = M(C)$. By Lemmas \ref{lem:Q3Grep} and \ref{lem:R9rep}, we conclude that $R$ contains $R_3$ as subring. But $(1+i+j)(1-i-j) = 0$, so $R_3$ has zero divisors. Hence $R$ is not a skew field. The result follows.
\end{proof}

	An attractive feature of this example is that the skew partial field $\parf_3$ is \emph{finite}. Contrast this with Wedderburn's theorem that every finite skew field is commutative.
	
	Our example is quite large and not connected. Connectivity is easily repaired by the operation of truncation. An interesting question is what the smallest matroid would be that is representable over a skew partial field but not over any skew field.

\section{Multilinear representations}\label{sec:multilin}

An $n$-multilinear representation of a matroid $M$ is a representation of the polymatroid with rank function $n\cdot \rank_M$. We will make this notion more precise. First some notation. For a vector space $K$, we denote by $\grass(n,K)$ the collection of all $n$-dimensional subspaces of $K$. Note that this object is called a \emph{Grassmannian}. It has been studied extensively, but here it is merely used as convenient notation. 

While the main interest in multilinear representations seems to be in the case that $K$ is a finite-dimensional vector space over a (commutative) field, we will state our results for vector spaces over skew fields, since the additional effort is negligible. 
It will be convenient to treat the vector spaces in this section as \emph{right} vector spaces. That is, we treat those vectors as \emph{column} vectors, rather than the row vectors used for chain groups. Analogously with Definition \ref{def:rowspan}, if $A$ is a matrix over a ring $R$ with $n$ columns, then $\colspan(A) := \{Ax : x \in R^n\}$. Finally, recall that, for subspaces $V,W$ of a vector space $K$ we have $V+W := \{x+y: x \in V, y \in W\}$, which is again a subspace.

\begin{definition}\label{def:multilinrep}
	Let $M$ be a rank-$r$ matroid, $n$ a positive integer, and $\field$ a skew field. An \emph{$n$-multilinear representation} of $M$ is a function $V:E(M)\rightarrow \grass(n,\field^{nr})$ that assigns, to each element $e\in E(M)$, an $n$-dimensional subspace $V(e)$ of the right vector space $\field^{nr}$, such that for all $X\subseteq E(M)$,
	\begin{align}
		\dim\Big( \sum_{e\in X} V(e)\Big) = n \rank_M(X).
	\end{align}
\end{definition}

\begin{example}\label{ex:nonpapmulti}
	We find a $2$-multilinear representation over $\Q$ of the non-Pappus matroid (Figure \ref{fig:nonpappus}). Let $A$ be the following matrix over $\Q$:
	\setcounter{MaxMatrixCols}{30} 
	\begin{align}
		\begin{bmatrix}
		1 & 0 &\omit\vrule& 0 & 0 &\omit\vrule& 0 & 0 &\omit\vrule& 1 & 0 &\omit\vrule& 2 & 2 &\omit\vrule&  1 & 0 &\omit\vrule&  2 & 2 &\omit\vrule&  0 & 6 &\omit\vrule&  0 & 6\\
		0 & 1 &\omit\vrule& 0 & 0 &\omit\vrule& 0 & 0 &\omit\vrule& 0 & 1 &\omit\vrule& 0 & 2 &\omit\vrule&  0 & 1 &\omit\vrule&  0 & 2 &\omit\vrule& -6 & 6 &\omit\vrule& -6 & 6\\
		0 & 0 &\omit\vrule& 1 & 0 &\omit\vrule& 0 & 0 &\omit\vrule& 1 & 0 &\omit\vrule& 1 & 0 &\omit\vrule&  3 & 0 &\omit\vrule&  6 & 6 &\omit\vrule&  3 & 0 &\omit\vrule&  6 & 6\\
		0 & 0 &\omit\vrule& 0 & 1 &\omit\vrule& 0 & 0 &\omit\vrule& 0 & 1 &\omit\vrule& 0 & 1 &\omit\vrule& -3 & 3 &\omit\vrule& -6 & 0 &\omit\vrule& -3 & 3 &\omit\vrule& -6 & 0\\
		0 & 0 &\omit\vrule& 0 & 0 &\omit\vrule& 1 & 0 &\omit\vrule& 1 & 0 &\omit\vrule& 1 & 0 &\omit\vrule&  1 & 0 &\omit\vrule&  1 & 0 &\omit\vrule&  1 & 0 &\omit\vrule&  1 & 0\\
		0 & 0 &\omit\vrule& 0 & 0 &\omit\vrule& 0 & 1 &\omit\vrule& 0 & 1 &\omit\vrule& 0 & 1 &\omit\vrule&  0 & 1 &\omit\vrule&  0 & 1 &\omit\vrule&  0 & 1 &\omit\vrule&  0 & 1
	\end{bmatrix}.\label{eq:unwrappap}
	\end{align}
	Let $V:\{1,\ldots,9\}\rightarrow \grass(2,\Q^6)$ be defined by $V(i) := \colspan(A[\{1,\ldots,6\}, \{2i-1,2i\}])$. Then $V$ is a $2$-linear representation of the non-Pappus matroid over $\Q$. This claim is easily verified using a computer.
\end{example}

The observant reader will have noticed the similarity between the matrices in Examples \ref{ex:nonpapchain} and \ref{ex:nonpapmulti}. This is not by accident. In fact, it illustrates the main point of this section. For each integer $n$ and field $\field$, we define the following skew partial field:
\begin{align}
	\parf(n, \field) := (\matring(n,\field), \GL(n, \field)).
\end{align}

\begin{theorem}\label{thm:multilinskew}
	Let $\field$ be a skew field, and $n \in \N$. A matroid $M$ has an $n$-multilinear representation over $\field$ if and only if $M$ is representable over the skew partial field $\parf(n,\field)$.
\end{theorem}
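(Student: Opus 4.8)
The plan is to establish the equivalence by translating between the two kinds of data directly at the level of matrices. Suppose first that $M$ is representable over $\parf(n,\field)$. By Corollary~\ref{cor:weakskewstrong} we may assume $M = M[I\ D]$ for a strong $\parf(n,\field)$-matrix $[I\ D]$, where the entries of $[I\ D]$ are $n\times n$ matrices over $\field$ (elements of $\matring(n,\field)$), and invertible ones are precisely the members of $\GL(n,\field)$. The key step is to \emph{unwrap}: replace the $r\times E$ matrix of $n\times n$ blocks by an $(nr)\times(nE)$ matrix $\hat A$ over $\field$, obtained by substituting each block-entry $A_{xe}$ for the corresponding $n\times n$ submatrix. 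For $e\in E$, define $V(e) := \colspan\big(\hat A[\text{all rows},\,\text{the }n\text{ columns of }e]\big)$, an $n$-dimensional subspace of $\field^{nr}$ (it is exactly $n$-dimensional because the identity blocks guarantee full column rank on each block). One then checks that $\dim\big(\sum_{e\in X} V(e)\big) = n\,\rank_M(X)$ for every $X\subseteq E$. The cleanest way is: for a basis $B$ of $M$, the block-submatrix $A[X,B]$ is invertible over $\matring(n,\field)$ (Theorem~\ref{thm:skewpfinvertible}), hence the corresponding $(nr)\times(nr)$ matrix over $\field$ is invertible, so $\sum_{e\in B}V(e) = \field^{nr}$ has dimension $nr = n\rank_M(E)$; applying this to minors $M\contract S\delete T$ (using Theorem~\ref{thm:minors}, since every pivot of a strong $\parf(n,\field)$-matrix is again one) gives the rank identity for every $X$, because the dimension of $\sum_{e\in X}V(e)$ only depends on the $\field$-column space spanned, which is governed by invertibility of square block-submatrices of $[I\ D]$.

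For the converse, suppose $V:E(M)\to\grass(n,\field^{nr})$ is an $n$-multilinear representation. Pick a basis $B$ of $M$; then $\sum_{e\in B}V(e) = \field^{nr}$, so we may choose, for each $e\in B$, an ordered basis of $V(e)$, and assembling these gives an invertible $(nr)\times(nr)$ matrix; normalizing, we may assume $V(e) = \colspan(e_i : i\in \text{block }e)$ for $e\in B$, i.e.\ the $B$-columns form the identity. For $f\in E\setminus B$, the subspace $V(f)$ is then the column span of some $(nr)\times n$ matrix, which we group into $r$ blocks of size $n\times n$ to obtain the block-entry $D_{ef}$ for $e\in B$. This produces a matrix $[I\ D]$ with entries in $\matring(n,\field)$. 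It remains to verify that $[I\ D]$ is a weak (indeed strong) $\parf(n,\field)$-matrix and that $M([I\ D]) = M$: a square block-submatrix $[I\ D][X,Y]$ (with $|X|=|Y|$) is invertible over $\matring(n,\field)$ iff the associated $(n|X|)\times(n|X|)$ matrix over $\field$ is invertible iff $\dim\big(\sum_{e\in Y}V(e)\big)$ restricted appropriately equals $n|X|$, which by the multilinear rank condition happens iff $Y$ is independent in the relevant minor; this simultaneously shows $\det$-type nonsingularity lands in $\GL(n,\field)\cup\{0\}$ (using Proposition~\ref{prop:Pchaingrouptest}: every pivot of $[I\ D]$ corresponds to a change of basis of $\field^{nr}$ and hence again has block-entries that are either invertible or zero), and identifies the bases of $M([I\ D])$ with those of $M$.

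The main obstacle is the bookkeeping that relates (non)singularity of a matrix \emph{of $n\times n$ blocks} over $\matring(n,\field)$ to (non)singularity of the corresponding \emph{unwrapped} matrix over $\field$, and to the dimension of a sum of the subspaces $V(e)$ — and to do this not just for $B$-submatrices but after arbitrary pivots, so that Proposition~\ref{prop:Pchaingrouptest} applies and one never needs determinants over $\matring(n,\field)$. The point to get right is that a pivot over a block-entry $A_{xe}\in\GL(n,\field)$ (Definition~\ref{def:skewpivot}) is, on the unwrapped level, an ordinary sequence of $\field$-row operations implementing a change of basis of $\field^{nr}$, hence sends multilinear representations to multilinear representations and strong $\parf(n,\field)$-matrices to strong $\parf(n,\field)$-matrices; once this is in place, both directions reduce to the single statement that $Y$ indexes a set with $\dim(\sum_{e\in Y}V(e))=n|Y|$ exactly when the block-submatrix on $Y$ is block-invertible, which is routine linear algebra over $\field$. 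I would carry out the two directions in the order above, isolating as a preliminary lemma the equivalence ``block-submatrix invertible over $\matring(n,\field)$ $\iff$ unwrapped submatrix invertible over $\field$'' together with its compatibility with pivoting.
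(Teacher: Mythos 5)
Your proposal takes essentially the same route as the paper's proof: you unwrap/wrap between matrices over $\matring(n,\field)$ and block matrices over $\field$ (the paper's maps $z_n$ and $z_n^{-1}$), define $V(e)$ as the span of the $e$-th column block, and rest both directions on the correspondence between block-invertibility and invertibility of the unwrapped matrix (Lemma~\ref{lem:unwrapprops}) together with pivoting/normalization and Proposition~\ref{prop:Pchaingrouptest}. The only difference is bookkeeping: for the dimension identity on a general set $X$ the paper extends a maximal independent subset of $X$ to a basis and normalizes to read off zero blocks, rather than passing to minors as you suggest, but this is the same argument in a different guise.
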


Our proof is constructive, and shows in fact that there is a bijection between weak $\parf(n,\field)$-matrices, and coordinatizations of $n$-multilinear representations of $M$. We make the following definitions:

\begin{definition}\label{def:unwrap}
	Let $A$ be an $r\times s$ matrix with entries from $\matring(n,\field)$. The \emph{unwrapping} of $A$, denoted by $z_n(A)$, is the $rn\times sn$ matrix $D$ over $\field$ such that, for all $a\in \{1,\ldots,r\}$, $b\in\{1,\ldots, s\}$, and $c,d\in\{1,\ldots,n\}$, we have $D[n(a-1) + c, n(b-1) + d]$ equals the $(c,d)$th entry of the matrix in $A[a,b]$. Conversely, we say that $A$ is the \emph{wrapping of order $n$} of $D$, denoted by $z_n^{-1}(D)$.
\end{definition}

In other words, we can partition $z_n(A)$ into $rs$ blocks of size $n\times n$, such that the entries of the $(a,b)$th block equal those of the matrix in $A[a,b]$. With this terminology, the matrix in \eqref{eq:unwrappap} is the unwrapping of the matrix in \eqref{eq:nonpapskewmat}. We will use the following properties:

\begin{lemma}\label{lem:unwrapprops}
	Let $A_1,A_2$ be $r\times s$ matrices over $\matring(n,\field)$, and let $A_3$ be an $s\times t$ matrix over $\matring(n,\field)$. The following hold:
	\begin{enumerate}
		\item $z_n(A_1 + A_2) = z_n(A_1) + z_n(A_2)$;
		\item $z_n(A_1A_3) = z_n(A_1)z_n(A_3)$;
		\item If $A_1$ is square, then $A_1$ is invertible if and only if $z_n(A_1)$ is invertible.
	\end{enumerate}
\end{lemma}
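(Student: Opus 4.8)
The plan is to verify the three claims directly from Definition~\ref{def:unwrap}, treating the unwrapping as a "flattening" of block matrices. Throughout, index the $rn\times sn$ matrix $z_n(A)$ by pairs $(a,c)$ with $a\in\{1,\dots,r\}$, $c\in\{1,\dots,n\}$ for rows (and similarly $(b,d)$ for columns), so that $z_n(A)[(a,c),(b,d)]$ is the $(c,d)$-entry of the $n\times n$ matrix $A[a,b]$.

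First I would dispatch the additivity claim (i). Since matrix addition on $\matring(n,\field)$ is entrywise, $(A_1+A_2)[a,b] = A_1[a,b] + A_2[a,b]$, and taking the $(c,d)$-entry of both sides gives $z_n(A_1+A_2)[(a,c),(b,d)] = z_n(A_1)[(a,c),(b,d)] + z_n(A_2)[(a,c),(b,d)]$; since these agree in every entry, the matrices are equal.

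The main content is claim (ii), the multiplicativity of unwrapping; this is the step I expect to be the only one requiring care, though it is still routine. Here $(A_1A_3)[a,b'] = \sum_{b=1}^{s} A_1[a,b]\,A_3[b,b']$ as a product/sum in $\matring(n,\field)$. Taking the $(c,d)$-entry of the right-hand side and expanding each $n\times n$ matrix product yields
\begin{align}
z_n(A_1A_3)[(a,c),(b',d)] = \sum_{b=1}^{s}\sum_{e=1}^{n} \bigl(A_1[a,b]\bigr)_{ce}\,\bigl(A_3[b,b']\bigr)_{ed} = \sum_{(b,e)} z_n(A_1)[(a,c),(b,e)]\, z_n(A_3)[(b,e),(b',d)],
\end{align}
which is exactly the $((a,c),(b',d))$-entry of the ordinary matrix product $z_n(A_1)z_n(A_3)$. (One must keep the factors in order, since $\field$ is a skew field; no reordering is used, so this is fine.) Hence $z_n(A_1A_3) = z_n(A_1)z_n(A_3)$.

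Finally, claim (iii) follows from (i) and (ii) together with the observation that $z_n$ sends the identity to the identity: $z_n(I_r) = I_{rn}$, since the $(a,a)$ blocks are $I_n$ and the off-diagonal blocks are $0$. If $A_1$ is invertible with inverse $B$, then $z_n(A_1)z_n(B) = z_n(A_1B) = z_n(I_r) = I_{rn}$ and likewise on the other side, so $z_n(A_1)$ is invertible. Conversely, $z_n$ is clearly a bijection from $r\times r$ matrices over $\matring(n,\field)$ onto $rn\times rn$ matrices over $\field$ (it merely relabels entries), so if $z_n(A_1)$ is invertible, its inverse is $z_n(B)$ for a unique $B$, and applying $z_n^{-1}$ to $z_n(A_1)z_n(B) = I_{rn} = z_n(I_r)$ gives $A_1B = I_r$, and similarly $BA_1 = I_r$; thus $A_1$ is invertible.
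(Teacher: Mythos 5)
Your proof is correct and follows exactly the route the paper has in mind: the paper omits the argument, remarking only that it ``boils down to the elementary fact that addition and multiplication of matrices can be carried out in a blockwise fashion,'' and your entrywise verification of (i), the double-sum block-product computation for (ii) (correctly keeping factors in order over the skew field), and the deduction of (iii) from multiplicativity, $z_n(I_r)=I_{rn}$, and bijectivity of $z_n$ are precisely that blockwise argument written out. No gaps.
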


We omit the elementary proofs, which all boil down to the elementary fact from linear algebra that addition and multiplication of matrices can be carried out in a blockwise fashion. We can now prove the main result:

\begin{proof}[Proof of Theorem \ref{thm:multilinskew}]
	Let $\field$ be a skew field, let $n \in \N$, and let $M$ be a matroid with elements $E = \{1,\ldots,s\}$. First, let $A$ be an $r\times s$ weak $\parf(n,\field)$-matrix such that $M = M[A]$. Let $D = z_n(A)$. Define the map $V_D:E(M)\rightarrow \field^{nr}$ by 
	\begin{align}
		V_D(e) := \colspan(D[\{1,\ldots,nr\}, \{n(e-1) +1, \ldots, n(e-1) + n\}]).
	\end{align} 
	\begin{claim}
		$V_D$ is an $n$-multilinear representation of $M$ over $\field$.
	\end{claim}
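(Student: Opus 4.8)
The plan is to show that for every subset $X \subseteq E(M)$ we have $\dim\big(\sum_{e\in X} V_D(e)\big) = n\rank_M(X)$, which is exactly the defining property of an $n$-multilinear representation. The key observation is that $\sum_{e\in X} V_D(e)$ is precisely the column span of the $nr \times n|X|$ submatrix $D[\{1,\ldots,nr\}, S_X]$, where $S_X := \{n(e-1)+c : e \in X,\ 1\le c\le n\}$ is the block of columns of $D = z_n(A)$ corresponding to the $\matring(n,\field)$-columns indexed by $X$. So the claim reduces to computing the rank (over the skew field $\field$) of this submatrix of $D$, and comparing it with $n$ times the rank of the corresponding submatrix $A[\{1,\ldots,r\}, X]$ of $A$ (over the skew partial field).

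First I would reduce to a block-triangular normal form. Since $A$ is a weak $\parf(n,\field)$-matrix, Corollary~\ref{cor:weakskewstrong} lets me pass, after left-multiplication by an invertible matrix $F$ over $\matring(n,\field)$, to the case where $A$ itself (or rather, after restricting to a spanning set of rows) has a distinguished identity submatrix; more directly, choose a basis $B$ of $M[A]$ with $B\subseteq X$ maximal, so that $\rank_M(X) = |B|$, and apply Lemma~\ref{lem:weakskewstrong} to get $A[\{1,\ldots,r\},B]$ invertible. Put $F := (A[\{1,\ldots,r\},B])^{-1}$ and replace $A$ by $FA$; by Lemma~\ref{lem:weakskewmatops} this is still a weak $\parf(n,\field)$-matrix representing the same matroid, and now the columns in $B$ form an identity matrix. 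On the $\field$ side, $z_n(FA) = z_n(F)z_n(A)$ by Lemma~\ref{lem:unwrapprops}(ii), and $z_n(F)$ is invertible by Lemma~\ref{lem:unwrapprops}(iii), so multiplying by $z_n(F)$ does not change column spans or ranks of the relevant submatrices. Thus I may assume $A$ restricted to columns $B$ is the identity.

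Now the rank count becomes transparent on both sides. Since $B$ is a maximal independent subset of $X$, every column of $A$ indexed by $e\in X\setminus B$ lies in the span of the columns indexed by $B$ (working over a field quotient $R/I$ as in Proposition~\ref{prop:pfmatroid} to make "span" meaningful, or just noting that in $M[A]$ the element $e$ is dependent on $B$); hence $\rank$ of the $\field$-matrix $D[\text{all rows}, S_X]$ equals $\rank$ of $D[\text{all rows}, S_B]$, which is the $nr\times n|B|$ matrix whose block structure is the identity blocks of $z_n(I_B)$ padded with zero rows — so its $\field$-rank is exactly $n|B| = n\rank_M(X)$. Conversely I must check the span of the $X\setminus B$ columns of $D$ adds nothing, which follows because each such $\matring(n,\field)$-column is an $\matring(n,\field)$-linear combination of the $B$-columns, and unwrapping turns this into a $\field$-linear combination of the corresponding blocks of $\field$-columns (again Lemma~\ref{lem:unwrapprops}(i)--(ii)); so $V_D(e) \subseteq \sum_{b\in B} V_D(b)$ for $e\in X\setminus B$, giving $\dim\sum_{e\in X}V_D(e) = \dim\sum_{b\in B}V_D(b) = n|B|$.

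The main obstacle, and the only place requiring care, is making rigorous the passage between "column of $A$ lies in the span of $B$-columns over a field quotient" and "the corresponding $\field$-column blocks of $D$ span no more than the $B$-blocks." Over a noncommutative ring one cannot take determinants, but the argument can be run purely through the dual/chain-group picture: a chain $c\in\rowspan(A)^\perp$ with $\|c\|\subseteq B\cup\{e\}$ and $e\in\|c\|$ exists precisely because $\{b\in B : \text{...}\}\cup\{e\}$ contains the fundamental circuit of $e$ with respect to $B$ (Theorem~\ref{thm:dualchaingr}), and such a chain, read through $z_n$, records exactly the $\matring(n,\field)$-coefficients expressing column $e$ of $A$ in terms of the $B$-columns — unwrapping then gives the sought $\field$-linear dependence among the column blocks of $D$. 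Once this bookkeeping is set up, the dimension formula falls out and the claim (hence this direction of Theorem~\ref{thm:multilinskew}) is proved; the reverse direction, building a weak $\parf(n,\field)$-matrix from a given multilinear representation by wrapping a suitable coordinatizing matrix with $z_n^{-1}$, is handled symmetrically using the same three properties of $z_n$ together with Theorem~\ref{thm:skewpfinvertible} to match up bases.
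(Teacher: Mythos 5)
Your overall skeleton is the same as the paper's (normalize on a basis via Lemmas \ref{lem:weakskewstrong} and \ref{lem:weakskewmatops}, then transfer to $\field$ with Lemma \ref{lem:unwrapprops}), but two steps as written do not go through. First, you cannot ``choose a basis $B$ of $M[A]$ with $B\subseteq X$'': unless $X$ is spanning there is no such basis, and if you instead let $B$ be a maximal independent subset of $X$, then $A[\{1,\ldots,r\},B]$ is not square, so Lemma \ref{lem:weakskewstrong} gives you nothing to invert and $F:=(A[\{1,\ldots,r\},B])^{-1}$ is undefined. The repair is what the paper does: take $I\subseteq X$ maximal independent, extend it to a basis $B$ of $M$, and normalize by $F:=(A[\{1,\ldots,r\},B])^{-1}$; then the $I$-columns become standard basis blocks and your lower bound $n|I|$ follows after unwrapping. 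Second, the appeal to Proposition \ref{prop:pfmatroid} via a field quotient $R/I$ is not available here: $\matring(n,\field)$ is noncommutative (and has no proper nonzero two-sided ideals when $\field$ is a field), so that proposition does not apply and ``span over a field quotient'' is meaningless; likewise ``$e$ is dependent on $B$ in $M[A]$'' is a matroid statement, not yet a ring-linear statement about columns, so on its own it proves nothing.

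What saves the argument is your last paragraph, and it is genuinely a different mechanism from the paper's for the key containment $V_D(x)\subseteq\sum_{e\in I}V_D(e)$, $x\in X\setminus I$. The paper gets it by observing that, after normalizing on $B$, the entries $A'[b,x]$ with $b\in B\setminus I$ must vanish: otherwise a pivot would produce the basis $(B\setminus b)\cup\{x\}$, making $I\cup\{x\}\subseteq X$ independent and contradicting maximality of $I$; the zero blocks in $z_n(A')$ then give the containment at once. You instead take the unique circuit inside $I\cup\{x\}$, use Theorem \ref{thm:dualchaingr} to produce a $G$-primitive chain $d\in C^\perp$ supported on it, and read off from $a\cdot d=0$ (applied to each row $a$ of $A$), using that $d_x\in\GL(n,\field)$ is invertible, that column $x$ of $A$ is the right $\matring(n,\field)$-linear combination of the $I$-columns with coefficients $-d_f d_x^{-1}$; unwrapping via Lemma \ref{lem:unwrapprops} converts this into the required $\field$-linear containment of column blocks of $D$. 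Both routes work; the paper's exchange argument is shorter and needs no duality, while yours avoids pivots entirely but, as you concede, is only sketched --- to be complete you should carry out exactly this computation (with the invertibility of $d_x$ and the right-multiplication bookkeeping made explicit) after fixing the basis-extension issue above.
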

	\begin{subproof}
		Pick a set $X\subseteq E$. We have to show that 
		\begin{align}
		  \dim(\sum_{e\in X} V_D(e)) = n \rank_M(X).\label{eq:dimrank}	
		\end{align}
		Note that if we replace $D$ by $HD$ for some matrix $H \in \GL(nr,\field)$, then 
		\begin{align}
		  \dim(\sum_{e\in X} V_D(e)) = \dim(\sum_{e\in X} V_{HD}(e)).
		\end{align}		
		Let $I$ be a maximal independent set contained in $X$, and let $B$ be a basis of $M$ containing $I$. Let $F$ be the $r\times r$ matrix over $\parf(n,\field)$ such that $(FA)[\{1,\ldots,r\},B]$ is the identity matrix. By Lemma \ref{lem:weakskewstrong}, $F$ exists. Define $A' := FA$, and index the rows of $A'$ by $B$, such that $A'[b,b] = 1$ (i.e. the $n\times n$ identity matrix) for all $b\in B$. Let $H := z_n(F)$, and $D' := HD$. By Lemma \ref{lem:unwrapprops}, $D' = z(FA)$. Since no pivot can enlarge the intersection of $B$ with $X$, $A'[b,x] = 0$ (i.e. the $n\times n$ all-zero matrix) for all $b\in B\setminus I$ and all $x\in X\setminus I$. These entries correspond to blocks of zeroes in $D'$, and it follows that
		\begin{align}
		  \dim(\sum_{e\in X} V_{D'}(e)) = \dim(\sum_{e\in I} V_{D'}(e)) = n |I|,
		\end{align}		
		as desired.
	\end{subproof}
	For the converse, let $V$ be an $n$-multilinear representation of $M$. Let $D$ be an $rn \times sn$ matrix over $\field$ such that the columns indexed by $\{n(e-1)+1,\ldots, n(e-1)+n\}$ contain a basis of $V(e)$. Let $A := z_n^{-1}(D)$. 	
	\begin{claim}
		$A$ is a weak $\parf(n,\field)$-matrix.
	\end{claim}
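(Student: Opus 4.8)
The plan is to prove the Claim directly from the definitions: I will show that $C:=\rowspan(A)$ is a $\parf(n,\field)$-chain group, i.e.\ that every elementary chain of $C$ is a left $\matring(n,\field)$-multiple of a $\GL(n,\field)$-primitive chain of $C$ (together with the short remark at the end that $M(C)=M$, this completes Theorem~\ref{thm:multilinskew}). The first step is a dictionary. Every chain of $C$ has the form $zA$ for a $1\times r$ matrix $z$ over $\matring(n,\field)$; writing $Z:=z_n(z)$ (Definition~\ref{def:unwrap}), which ranges over \emph{all} $n\times nr$ matrices over $\field$, Lemma~\ref{lem:unwrapprops} gives $z_n(zA)=ZD$. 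By the definition of $z_n$, the $e$-th block of $n$ columns of $z_n(zA)$ is the entry $(zA)_e\in\matring(n,\field)$, and it also equals $ZD_e$, where $D_e$ is the $nr\times n$ submatrix of $D$ on the columns indexed $n(e-1)+1,\dots,n(e-1)+n$, a basis matrix of $V(e)$. Since $D_e$ has rank $n$, we read off $(zA)_e=0 \iff V(e)\subseteq\ker Z$, and $(zA)_e\in\GL(n,\field)\iff V(e)\cap\ker Z=0$. Hence $\|zA\|=\{e: V(e)\not\subseteq\ker Z\}$, and $zA$ is $\GL(n,\field)$-primitive iff $V(e)\cap\ker Z=0$ for every $e\in\|zA\|$.

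Next I build, for each hyperplane $H$ of $M$, a $\GL(n,\field)$-primitive chain $a^H\in C$ with support $E\setminus H$. Put $W_H:=\sum_{e\in H}V(e)$; by Definition~\ref{def:multilinrep}, $\dim W_H=n\rank_M(H)=n(r-1)$, so $W_H$ has codimension $n$ in $\field^{nr}$. Choose an $n\times nr$ matrix $Z$ over $\field$ with $\ker Z=W_H$ and set $a^H:=z_n^{-1}(Z)A\in C$. For $e\in H$, $V(e)\subseteq W_H=\ker Z$, so $a^H_e=0$; for $e\notin H$, $\rank_M(H\cup e)=r$ forces $\dim(W_H+V(e))=nr$, hence $V(e)\not\subseteq W_H$ and $\dim(V(e)\cap W_H)=n+n(r-1)-nr=0$. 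By the dictionary, $\|a^H\|=E\setminus H$ and $a^H$ is $\GL(n,\field)$-primitive.

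Now take an elementary chain $c=zA\in C$ and set $Z:=z_n(z)$. Since $c\neq 0$, $\ker Z$ is a proper subspace of $\field^{nr}$, and $\sum_{e\notin\|c\|}V(e)\subseteq\ker Z$, so $n\rank_M(E\setminus\|c\|)=\dim\sum_{e\notin\|c\|}V(e)<nr$, giving $\rank_M(E\setminus\|c\|)\le r-1$. Thus $E\setminus\|c\|$ is contained in some hyperplane $H$, and the chain $a^H$ satisfies $\emptyset\neq\|a^H\|=E\setminus H\subseteq\|c\|$; elementarity of $c$ forces $\|a^H\|=\|c\|$. Picking $e\in\|c\|$, the chain $c-\big(c_e(a^H_e)^{-1}\big)a^H\in C$ has support inside $\|c\|\setminus e\subsetneq\|c\|$, hence is $0$, so $c=\big(c_e(a^H_e)^{-1}\big)a^H$ exhibits $c$ as an $\matring(n,\field)$-multiple of the $\GL(n,\field)$-primitive chain $a^H\in C$. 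This proves $C$ is a $\parf(n,\field)$-chain group, which is the Claim. Finally $M(C)=M$: by the above, the support of any elementary chain is $E\setminus H$ for a hyperplane $H$, each such set is realized by $a^H$ (which is elementary, since a strictly smaller support would again be some $E\setminus H'$ with $H\subsetneq H'$, impossible), so the cocircuits of $M(C)$ are exactly those of $M$.

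The only real obstacle is the bookkeeping in the dictionary: chains carry a \emph{left} $\matring(n,\field)$-action and are indexed so that $(zA)_e=ZD_e$, while the $V(e)$ live in a \emph{right} vector space; once one verifies that the vanishing and invertibility of the entry $(zA)_e$ are governed by $\ker Z$ and $V(e)$ as above, the combinatorial heart — that complements of supports of elementary chains are precisely the hyperplanes of $M$ — follows immediately from the defining dimension equality of a multilinear representation.
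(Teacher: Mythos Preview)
Your proof is correct and takes a genuinely different route from the paper. The paper argues via Proposition~\ref{prop:Pchaingrouptest}: it observes, through the wrapping bijection of Lemma~\ref{lem:unwrapprops}, that the $r\times r$ submatrix $A[\{1,\ldots,r\},B]$ is invertible precisely when the corresponding $nr\times nr$ submatrix of $D$ is, which happens precisely when $B$ is a basis of $M$; it then invokes the pivoting criterion to conclude that $\rowspan(A)$ is a $\parf(n,\field)$-chain group. You instead verify the chain-group axiom directly, building for each hyperplane $H$ a primitive chain $a^H$ from an $n\times nr$ matrix $Z$ with kernel $\sum_{e\in H}V(e)$, and showing that every elementary chain is a left $\matring(n,\field)$-multiple of some $a^H$.

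Your dictionary --- $(zA)_e=0 \iff V(e)\subseteq\ker Z$ and $(zA)_e\in\GL(n,\field) \iff V(e)\cap\ker Z=0$ --- is the geometric content that the paper's terse argument leaves implicit. In fact the paper's step from ``size-$r$ submatrices are invertible iff bases'' to ``Proposition~\ref{prop:Pchaingrouptest} applies'' still needs one to check that each individual entry of a pivoted matrix lies in $\GL(n,\field)\cup\{0\}$; that follows from a single-exchange rank computation which is a special case of your dictionary (applied with $\ker Z=\sum_{e\in B\setminus b}V(e)$). So your approach is more self-contained and makes the elementary-chain/cocircuit correspondence explicit, while the paper's is shorter because it leans on the pivoting machinery developed earlier in Section~\ref{sec:chaingr}.
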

	\begin{subproof}
	   From Lemma \ref{lem:unwrapprops} it follows that $z_n^{-1}$ defines a bijection between $\GL(nr, \field)$ and $\GL(r,M(n,\field))$. A submatrix of $D$ corresponding to a set $B\subseteq E$ of size $r$ is invertible if and only if it has full column rank, if and only if $B$ is a basis. Hence $A[\{1,\ldots,r\},B]$ is invertible if and only if $B$ is a basis of $M$. It now follows from Proposition \ref{prop:Pchaingrouptest} that $A$ is a weak $\parf$-matrix. Clearly $M = M[A]$. 
	\end{subproof}
	This completes the proof.
\end{proof}

\section{The Matrix-Tree theorem and quaternionic unimodular matroids}\label{sec:quat}
In this section we will generalize Kirchhoff's famous formula for counting the number of spanning trees in a graph to a class of matroids called \emph{quaternionic unimodular}. This is not unprecedented: it is well-known that the number of bases of a regular matroid can be counted likewise, and the same holds for sixth-roots-of-unity ($\sqrt[6]{1}$) matroids \cite{Ly03}. The common proof of Kirchhoff's formula goes through the Cauchy-Binet formula, an identity involving determinants. Our main contribution in this section is a method to delay the introduction of determinants, so that we can work with skew fields. The price we pay is that we must restrict our attention to a special case of the Cauchy-Binet formula.

Let $p = a + bi + cj + dk \in \quat$. The conjugate of $p$ is $\bar{p} = a - bi - cj - dk$, and the norm of $p$ is the nonnegative real number $|p|$ such that $|p|^2 = p\bar{p} = a^2 + b^2 + c^2 + d^2$. Now define $S\quat := \{ p \in \quat: |p| = 1\}$, and let the quaternionic unimodular partial field be $\QU := (\quat, S\quat)$. We say a matroid $M$ is quaternionic unimodular (QU) if there exists a $\QU$-chain group $C$ such that $M = M(C)$. The class of QU matroids clearly contains the SRU matroids, and hence the regular matroids. Moreover, the class properly extends both classes, since $U_{2,6}$ has a QU representation but no SRU representation. To find this representation, pick elements $p,q,r \in \quat$ such that $|i-j| = 1$ for all distinct $i,j \in \{0,1,p,q,r\}$. Then the following matrix is a $\QU$-matrix.
\begin{align}
	\begin{bmatrix}
		1 & 0 & 1 & 1 & 1 & 1\\
		0 & 1 & 1 & p & q & r
	\end{bmatrix}.
\end{align}

We will use the well-known result that the map $\phi:\quat\rightarrow \matring(2,\C)$ defined by
\begin{align}
	\phi(a + bi + cj + dk) := \begin{bmatrix}
	  a + bi & c + di\\
	  -c + di & a - bi
	\end{bmatrix}\label{eq:phi}
\end{align}
is a ring homomorphism. 
Denote the conjugate transpose of a matrix $A$ by $A^\dag$.
It is easy to check that, if $p$ is a quaternion, then $\phi(p)^\dag = \phi(\bar{p})$. Moreover, $|p| = \sqrt{\det(\phi(p))}$. 
Recall the unwrapping function $z_n$ from the previous section. We define
\begin{align}
	\delta: \matring(r, \quat) \rightarrow \R		
\end{align}
by
\begin{align}
	\delta(D) := \sqrt{|\det(z_2(\phi(D)))|}.
\end{align}

\begin{theorem}\label{thm:cauchybinet}
	Let $r, s$ be positive integers with $s \geq r$, let $X, E$ be finite sets with $|X| = r$ and $|E| = s$, and let $A$ be an $X\times E$ matrix over $\quat$.
	Then the following equality holds:
	\begin{align}
		\delta(AA^\dag) = \sum_{B\subseteq E : |B| = r} \delta(A[X,B] A[X,B]^\dag).\label{eq:CB}
	\end{align}
\end{theorem}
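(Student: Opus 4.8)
The plan is to push the whole identity through the ring homomorphism $z_2\circ\phi$ into $2r\times 2s$ matrices over $\C$, and there to replace the square root built into $\delta$ by a \emph{Pfaffian}, which is the object that naturally ``halves'' a determinant. Combining Lemma~\ref{lem:unwrapprops} with the facts that $\phi$ is a ring homomorphism and $\phi(p)^\dag=\phi(\overline p)$, the composite $\Psi:=z_2\circ\phi$ extends entrywise to a ring homomorphism $\matring(k,\quat)\to\matring(2k,\C)$ with $\Psi(M^\dag)=\Psi(M)^\dag$ for every $k$, and by definition $\delta(M)=\sqrt{|\det\Psi(M)|}$ for square $M$. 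A small but essential point I would verify first is that $\det_\C\Psi(M)$ is a \emph{nonnegative real} for every square quaternionic $M$: from $\phi(q)=K^{-1}\,\overline{\phi(q)}\,K$ with $K:=\mm{0}{-1}{1}{0}$ one gets that $\Psi(M)$ is conjugate to $\overline{\Psi(M)}$, so $\det\Psi(M)\in\R$; it equals $1$ at the identity, is nonzero on the path-connected group $\GL(r,\quat)$, hence positive there, and $0$ on singular matrices. Consequently $\delta(AA^\dag)=\sqrt{\det(\Psi(A)\Psi(A)^\dag)}$ (a nonnegative Gram determinant), while for each $r$-subset $B\subseteq E$,
\[
  \delta\bigl(A[X,B]\,A[X,B]^\dag\bigr)=\bigl|\det\Psi(A[X,B])\bigr|=\det\Psi(A)[\ast,S_B],
\]
where $S_B\subseteq\{1,\dots,2s\}$ is the union of the coordinate pairs $\{2e-1,2e\}$ over $e\in B$.

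Next I would introduce the skew form. Let $\hat J:=I_s\otimes K$, a real skew-symmetric $2s\times 2s$ matrix, and set $\Omega:=\Psi(A)\,\hat J\,\Psi(A)^{T}$, a complex skew-symmetric $2r\times 2r$ matrix. Because the $2\times2$ blocks of $\phi$ have the shape $\phi(q)=\mm{u}{-\overline v}{v}{\overline u}$, the columns of $\Psi(A)$ come in consecutive pairs $w_e$ and $(I_r\otimes K)\overline{w_e}$. Writing $P:=\sum_e w_e w_e^\dag$ and using $K^{-1}=K^{T}=-K$, $K^2=-I$, a short blockwise computation gives both $\Psi(A)\Psi(A)^\dag=P-\hat JP^{T}\hat J$ (here I identify $\hat J$ on the row side with $I_r\otimes K$) and $\Omega=(\Psi(A)\Psi(A)^\dag)\,(I_r\otimes K)$, so that, since $\det(I_r\otimes K)=1$,
\[
  \det\Omega=\det\bigl(\Psi(A)\Psi(A)^\dag\bigr).
\]
Hence $\det(\Psi(A)\Psi(A)^\dag)=\det\Omega=\mathrm{Pf}(\Omega)^2$, and therefore $\delta(AA^\dag)=|\mathrm{Pf}(\Omega)|$.

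It then remains to evaluate the Pfaffian by the Cauchy--Binet formula for Pfaffians: for a $2r\times 2s$ matrix $C$ and a skew-symmetric $2s\times 2s$ matrix $\hat J$ one has $\mathrm{Pf}(C\hat JC^{T})=\sum_{|S|=2r}\mathrm{Pf}(\hat J[S,S])\,\det(C[\ast,S])$. Since $\hat J=I_s\otimes K$ is block diagonal with $2\times2$ blocks, $\mathrm{Pf}(\hat J[S,S])=0$ unless $S$ is a union of coordinate pairs, i.e.\ $S=S_B$ for some $r$-set $B$, in which case $\mathrm{Pf}(\hat J[S_B,S_B])=\mathrm{Pf}(K)^{r}=(-1)^{r}$. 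Thus $\mathrm{Pf}(\Omega)=(-1)^{r}\sum_{|B|=r}\det\Psi(A)[\ast,S_B]$, and since every summand is nonnegative,
\[
  \delta(AA^\dag)=|\mathrm{Pf}(\Omega)|=\sum_{B\subseteq E:\,|B|=r}\det\Psi(A)[\ast,S_B]=\sum_{B\subseteq E:\,|B|=r}\delta\bigl(A[X,B]\,A[X,B]^\dag\bigr),
\]
which is \eqref{eq:CB}. (If $A$ has rank $<r$ then every term and $\mathrm{Pf}(\Omega)$ vanish, so this case needs no separate treatment.)

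The genuinely non-obvious step is the last idea: the square root in $\delta$ must be dealt with by passing to $\mathrm{Pf}(\Psi(A)\hat J\Psi(A)^{T})$, \emph{not} by trying to match it term by term against the ordinary complex Cauchy--Binet expansion of $\det(\Psi(A)\Psi(A)^\dag)$ — that expansion also contains minors of $\Psi(A)$ on column sets that are not unions of coordinate pairs, and those do not vanish, so a naive comparison fails (one already sees this in the $r=1$ case). The remaining work is routine but must be done carefully: establishing $\det_\C\Psi(M)\ge 0$ for square quaternionic $M$ (via connectedness of $\GL(r,\quat)$), the blockwise identity $\det\Omega=\det(\Psi(A)\Psi(A)^\dag)$, and the sign bookkeeping in the specialized Pfaffian Cauchy--Binet step (fixing the increasing order of coordinates makes the sign $(-1)^r$ uniform over $B$, after which it is absorbed by the absolute value). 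I expect the blockwise manipulation and these sign conventions to be the most error-prone part, but none of it is deep.
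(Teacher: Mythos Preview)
Your proof is correct and takes a genuinely different route from the paper's.

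The paper argues by induction on $r+s$: after row operations making $A[X\setminus r,s]=0$ and $AA^\dag$ diagonal, it splits the sum over $B$ according to whether $s\in B$, reduces each part via Lemma~\ref{lem:delta} (essentially Claim~\ref{cl:splitsum2} and Claim~\ref{cl:Qp}), and applies the inductive hypothesis to $A[X,E\setminus s]$ and $A[X\setminus r,E\setminus s]$. No Pfaffians, no external identities; the argument is elementary and entirely self-contained.

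Your approach is more structural. You observe that $\det_\C\Psi(M)\ge 0$ for square quaternionic $M$ (the Study determinant), so the absolute value in $\delta$ is redundant, and then you manufacture a skew-symmetric matrix $\Omega=\Psi(A)(I_s\otimes K)\Psi(A)^T$ whose determinant coincides with $\det(\Psi(A)\Psi(A)^\dag)$; this replaces the square root by a Pfaffian. The Ishikawa--Wakayama minor summation formula (your ``Cauchy--Binet for Pfaffians'') then does all the work, and the block-diagonal structure of $I_s\otimes K$ kills every minor whose column set is not a union of coordinate pairs, leaving exactly the $r$-subsets $B$ of $E$. The positivity of each $\det\Psi(A)[\ast,S_B]$ lets you drop the absolute value on $\mathrm{Pf}(\Omega)$ and finish.

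What your approach buys is an explanation for \emph{why} the square root in $\delta$ is compatible with a Cauchy--Binet-type expansion at all: the determinant is secretly a Pfaffian squared, and Pfaffians have their own minor summation formula. It also makes transparent that the ``mixed'' $2r$-minors of $\Psi(A)$---the ones on column sets that are not unions of pairs---do not enter, which is exactly the obstruction you correctly flag against a naive use of the complex Cauchy--Binet formula. The cost is that you import a nontrivial identity (the Pfaffian minor summation formula) which you should cite; the paper's proof needs nothing beyond Lemma~\ref{lem:delta}. The blockwise identity $\Omega=\Psi(A)\Psi(A)^\dag(I_r\otimes K)$ and the sign bookkeeping ($\mathrm{Pf}(K)=-1$, hence the uniform $(-1)^r$) are routine but, as you say, deserve care; they check out.
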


For illustrative purposes we mention that the classical \emph{Cauchy-Binet formula} states that, if $r,s,X$, and $E$ are as in the theorem, and $A$ and $D$ are $X\times E$ matrices over a commutative ring, then
\begin{align}
	\det(AD^T) = \sum_{B\subseteq E : |B| = r} \det(A[X,B] D[X,B]^T).
\end{align}
We use the following properties of $\delta$ in our proof:
\begin{lemma}\label{lem:delta}
	Let $\delta$ be the function defined in Theorem \ref{thm:cauchybinet}, and let $A,A_1, A_2$ be $r\times r$ matrices over $\quat$. Then the following hold:
	\begin{enumerate}
		\item $\delta(A_1A_2) = \delta(A_1)\delta(A_2)$;\label{it:mul}
		\item $\delta(A^\dag) = \delta(A)$;\label{it:transpose}
		\item If $A = [a]$ for some $a \in \quat$, then $\delta(A) = |a|$;\label{it:single}
		\item If $A[\{1,\ldots, r-1\},r]$ contains only zeroes, then\label{it:coldevelop}
		\begin{align}
			\delta(A) = |A_{rr}| \delta(A[\{1,\ldots,r-1\},\{1,\ldots,r-1\}]);
		\end{align} 
		\item If $A$ is a permutation matrix, then $\delta(A) = 1$;
		\item If $A$ is a transvection matrix, then $\delta(A) = 1$.
	\end{enumerate}
\end{lemma}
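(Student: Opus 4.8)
The plan is to transport the whole computation to $2r\times2r$ matrices over $\C$ and then invoke elementary properties of the complex determinant. Write $\psi := z_2\circ\phi$, interpreting $\phi$ as acting entrywise, so that $\psi$ carries an $r\times r$ matrix over $\quat$ to a $2r\times2r$ matrix over $\C$ and $\delta(A)=\sqrt{|\det\psi(A)|}$. The first thing I would record is that $\psi$ is a ring homomorphism: the entrywise extension of the ring homomorphism $\phi$ is a ring homomorphism $\matring(r,\quat)\to\matring(r,\matring(2,\C))$, and $z_2$ is a ring homomorphism by Lemma~\ref{lem:unwrapprops}, so the composite is too. I would also note the obvious facts that $\psi$ turns a zero entry of $A$ into a $2\times2$ zero block, that $\psi$ of a $1\times1$ matrix $[a]$ is just $\phi(a)$, and that $\det\phi(a)=|a|^2$, which is the identity $|p|=\sqrt{\det\phi(p)}$ recorded just before~\eqref{eq:phi}.

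With this in place most of the items are immediate. For~(i), multiplicativity of $\psi$, of $\det$, of the complex modulus, and of the nonnegative square root give $\delta(A_1A_2)=\delta(A_1)\delta(A_2)$. For~(iii), $\psi([a])=\phi(a)$ and $\det\phi(a)=|a|^2$ give $\delta([a])=|a|$. For~(iv), the hypothesis makes $A$ block lower triangular with diagonal blocks $A[\{1,\dots,r-1\},\{1,\dots,r-1\}]$ and $A_{rr}$, hence $\psi(A)$ is block lower triangular with diagonal blocks $\psi(A[\{1,\dots,r-1\},\{1,\dots,r-1\}])$ and $\phi(A_{rr})$; the determinant splits as a product, and taking $\sqrt{|\cdot|}$ yields the stated formula. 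For~(v), a permutation matrix over $\quat$ is sent by $\psi$ to an ordinary $2r\times2r$ permutation matrix, whose determinant is $\pm1$. For~(vi), an elementary transvection $I_r+aE_{ij}$ with $i\neq j$ is sent to $I_{2r}$ with a single off-diagonal $2\times2$ block inserted, a triangular matrix with unit diagonal, so its determinant is $1$ (a general transvection is a product of such, or conjugate to one, so~(i) then finishes it).

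The only item that requires genuine care is~(ii). The key claim to establish is $\psi(A^\dag)=\psi(A)^\dag$, where on the right $\dag$ denotes the ordinary conjugate transpose of a complex matrix. This rests on $\phi(\bar p)=\phi(p)^\dag$: the block in position $(i,j)$ of $\phi(A^\dag)$ is $\phi(\overline{A_{ji}})=\phi(A_{ji})^\dag$, so passing from $\phi(A)$ to $\phi(A^\dag)$ transposes the outer block indices and conjugate-transposes each individual block; chasing these two operations through the definition of $z_2$ shows the result is exactly the conjugate transpose of $\psi(A)$. I would flag this as the main (indeed essentially the only) real content of the lemma, because $z_2$ on its own does \emph{not} commute with transposition: it transposes the outer indices but leaves the inner $2\times2$ blocks untransposed, and it is precisely the extra inner conjugate-transpose coming from $\phi(\bar p)=\phi(p)^\dag$ that corrects this. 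Once the claim is known, $\det\psi(A^\dag)=\det(\psi(A)^\dag)=\overline{\det\psi(A)}$ and $|\bar z|=|z|$ give $\delta(A^\dag)=\delta(A)$, completing the proof.
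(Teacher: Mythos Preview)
Your proof is correct and complete. The paper actually omits the proof of this lemma entirely, declaring it elementary; your argument via the ring homomorphism $\psi=z_2\circ\phi$ and standard determinant identities is exactly the kind of straightforward verification the authors had in mind, and your careful treatment of item~(ii)---checking that $\psi(A^\dag)=\psi(A)^\dag$ by tracking how the inner block conjugate-transpose from $\phi(\bar p)=\phi(p)^\dag$ compensates for $z_2$'s failure to commute with transposition---is the one point that genuinely deserves the attention you give it.
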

Recall that a \emph{permutation matrix} is a matrix with exactly one $1$ in each row and column, and zeroes elsewhere, whereas a \emph{transvection matrix} is a matrix with ones on the diagonal, and exactly one off-diagonal entry not equal to zero. Multiplication with such matrices from the left corresponds to row operations. The proof of the lemma is elementary; we omit it. By combining this lemma with the definition of a pivot, Definition \ref{def:skewpivot}, we obtain the following
\begin{corollary}\label{cor:pivot}
	Let $X,Y$ be a finite sets of size $r$, let $A$ be an $X\times Y$ matrix over $\quat$, and let $x \in X, y \in Y$ be such that $A_{xy} \neq 0$. Then
	\begin{align}
		\delta(A) = |A_{xy}| \delta(A^{xy}[X\setminus x, Y\setminus y]).
	\end{align}
\end{corollary}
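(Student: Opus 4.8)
\textbf{Proof proposal for Corollary~\ref{cor:pivot}.}

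The plan is to reduce the claim to parts \eqref{it:mul} and \eqref{it:coldevelop} of Lemma~\ref{lem:delta} by exhibiting a factorization of $A$ that isolates the pivot entry $A_{xy}$. First I would reindex, relabelling rows so that $x$ becomes the last row index and columns so that $y$ becomes the last column index; since $\delta$ is invariant under permutation matrices (Lemma~\ref{lem:delta}), this costs nothing and puts us in the situation $X = Y = \{1,\ldots,r\}$ with $x = y = r$ and $A_{rr} \neq 0$. Write $A$ in the block form of Figure~\ref{fig:pivot}, namely $A = \mm{D'}{b'}{c'}{\alpha}$ where $\alpha := A_{rr}$, $b'$ is the $(r-1)\times 1$ column $A[\{1,\ldots,r-1\},r]$, $c'$ is the $1\times(r-1)$ row $A[r,\{1,\ldots,r-1\}]$, and $D'$ is the $(r-1)\times(r-1)$ block $A[\{1,\ldots,r-1\},\{1,\ldots,r-1\}]$.

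The key step is the algebraic identity
\begin{align}
  A = \begin{bmatrix} I & b'\alpha^{-1} \\ 0 & 1 \end{bmatrix}
      \begin{bmatrix} D' - b'\alpha^{-1}c' & 0 \\ 0 & \alpha \end{bmatrix}
      \begin{bmatrix} I & 0 \\ \alpha^{-1}c' & 1 \end{bmatrix},
\end{align}
which one checks by direct multiplication (it is exactly the Schur-complement factorization, valid over any ring since $\alpha$ is a unit). The first and third factors are products of transvection matrices, so by Lemma~\ref{lem:delta}\eqref{it:mul} and the transvection clause each has $\delta$-value $1$; applying Lemma~\ref{lem:delta}\eqref{it:mul} again gives $\delta(A)$ equal to $\delta$ of the middle block-diagonal factor. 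That middle matrix has only zeroes above its last diagonal entry $\alpha$, so Lemma~\ref{lem:delta}\eqref{it:coldevelop} yields $\delta = |\alpha|\,\delta(D' - b'\alpha^{-1}c')$. Finally, reading off Definition~\ref{def:skewpivot}, the submatrix $A^{xy}[X\setminus x, Y\setminus y]$ is precisely $D' - b'\alpha^{-1}c'$ (the "otherwise" case of the pivot formula, with the row/column $y,x$ deleted), and $|A_{xy}| = |\alpha|$, which is the claimed equality.

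I do not anticipate a serious obstacle here; the only thing needing a moment's care is bookkeeping—making sure that the transvection entries land in the correct (strictly triangular) positions so the transvection clause of Lemma~\ref{lem:delta} applies verbatim, and matching the surviving block of $A^{xy}$ against the $(X\setminus x)\cup y$, $(Y\setminus y)\cup x$ indexing of Definition~\ref{def:skewpivot} after the deletion of row $y$ and column $x$. Since all the analytic content is already packaged in Lemma~\ref{lem:delta}, the corollary is really just an unwinding of the pivot definition through that lemma.
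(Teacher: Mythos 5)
Your argument is correct and is essentially the paper's own proof: the paper left-multiplies $A$ by the elimination matrix $F$ of Equation \eqref{eq:Fskew} and invokes the same multiplicativity and column-development clauses of Lemma \ref{lem:delta}, while you package the identical row operations as a two-sided Schur-complement factorization into transvections and a block-diagonal matrix. Either way the computation reduces to recognizing $D'-b'\alpha^{-1}c'$ as $A^{xy}[X\setminus x,Y\setminus y]$, exactly as in the paper.
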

\begin{proof}
	Consider the matrix $F$ from Equation \eqref{eq:Fskew}. Then the column of $FA$ indexed by $y$ has a 1 in position $(y,y)$ and zeroes elsewhere. Hence Lemma \ref{lem:delta} implies $\delta(FA) = \delta((FA)[X\setminus x, Y\setminus y])$. But $(FA)[X\setminus x,Y\setminus y] = A^{xy}[X\setminus x,Y\setminus y]$. Therefore
	\begin{align}
		  \delta(A) = \delta(FA)/\delta(F) = \delta(A_{xy})\delta(A^{xy}[X\setminus x,Y\setminus y]),
	\end{align}
	as stated.
\end{proof}

\begin{proof}[Proof of Theorem \ref{thm:cauchybinet}]
	\addtocounter{theorem}{-2}
	We prove the theorem by induction on $r + s$, the cases where $r = 1$ or $r = s$ being straightforward. We may assume $X = \{1,\ldots,r\}$ and $E = \{1,\ldots,s\}$. By Lemma \ref{lem:delta}, we can carry out row operations on $A$ without changing the result. Hence we may assume
	\begin{align}
		A[X\setminus r, s] = 0.
	\end{align}
	Further row operations (i.e. simultaneous row- and column-operations on $AA^\dag$) allow us to assume
	\begin{align}
		Q := AA^\dag \textrm{ is a diagonal matrix}.
	\end{align}
  Let $a := A_{rs}$. 
	\begin{claim}\label{cl:splitsum2}
		If $s \in B\subseteq E$ and $|B| = r$, then 
		\begin{align}
			\delta(A[X,B]A[X,B]^\dag) = (a\bar{a}) \delta(A[X\setminus r, B\setminus s]A[X\setminus r,B\setminus s]^\dag).
		\end{align}
	\end{claim}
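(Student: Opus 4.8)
The claim is a statement about a single $r\times r$ matrix whose last column (in rows $1,\ldots,r-1$) is zero, so I would prove it by reducing $A[X,B]A[X,B]^\dag$ to a block-triangular form and applying the column-development property of $\delta$ (Lemma~\ref{lem:delta}\eqref{it:coldevelop}). Write $A' := A[X\setminus r, B\setminus s]$ for the top-left block and $v := A[X\setminus r, s]$ for the column above the $(r,r)$ entry; by our normalisation $v = 0$. Also write $u := A[r, B\setminus s]$ for the bottom row (excluding the last entry) and $a = A_{rs}$. Then $A[X,B]$ has the block form $\bigl[\begin{smallmatrix} A' & 0 \\ u & a\end{smallmatrix}\bigr]$, and a direct block multiplication gives
\begin{align*}
  A[X,B]A[X,B]^\dag = \begin{bmatrix} A'A'^\dag & A'u^\dag \\ uA'^\dag & uu^\dag + a\bar a\end{bmatrix}.
\end{align*}

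**Key steps.** First I would use Lemma~\ref{lem:delta}\eqref{it:mul} and \eqref{it:transpose} to observe that $\delta$ is invariant under the row operations (equivalently, simultaneous row-and-column congruences $Q \mapsto FQF^\dag$ on $Q := AA^\dag$) that were already applied to bring us to the normalised situation; this is exactly the reason the claim may assume $v = 0$. Second, since the $(r,r)$ entry of $A[X,B]$ already kills the last column above it, I would clear the rest of the last column of the product matrix $A[X,B]A[X,B]^\dag$: the off-diagonal block $A'u^\dag$ can be annihilated by subtracting suitable $\quat$-multiples of the first $r-1$ rows from the last row and, symmetrically, the first $r-1$ columns from the last column — these are transvection-type operations, which by Lemma~\ref{lem:delta} do not change $\delta$, and they also return the bottom-right entry to $uu^\dag + a\bar a$ after the symmetric step (or, more cleanly, I would congruence the product by the block matrix $\bigl[\begin{smallmatrix} I & 0 \\ -uA'^{-1} & 1\end{smallmatrix}\bigr]$ when $A'$ is invertible, handling the singular case by continuity or by noting both sides vanish). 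Third, once the last row and column of the congruent matrix are zero except for the $(r,r)$ entry, I apply Lemma~\ref{lem:delta}\eqref{it:coldevelop} to peel off that entry, giving $\delta = |{\cdot}|$ times $\delta$ of the top-left $(r-1)\times(r-1)$ block, which is $\delta(A'A'^\dag)$.

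**The main obstacle.** The one genuinely delicate point is the value of the surviving bottom-right scalar after the clearing operations. Naively, eliminating $A'u^\dag$ by a transvection changes the $(r,r)$ entry; doing the symmetric column operation changes it back, but one must check the bookkeeping works out to leave exactly $a\bar a$ and not $uu^\dag + a\bar a$ minus a correction — concretely, the Schur-complement computation $\bigl(uu^\dag + a\bar a\bigr) - uA'^\dag (A'A'^\dag)^{-1} A' u^\dag = a\bar a$, which holds because $A'^\dag(A'A'^\dag)^{-1}A' $ acts as the identity on the row space of $A'$, and $u$ lies in that row space after appropriate reduction — but in fact the identity $uu^\dag + a\bar a - uA'^\dag (A'A'^\dag)^{-1} A' u^\dag$ need not equal $a\bar a$ unless $A'$ has full rank. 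So the cleanest route avoids Schur complements entirely: perform the block congruence on $A[X,B]$ itself (not on the product), noting that right-multiplying $A[X,B]$ by an invertible $\quat$-matrix $P$ with $P^\dag$ on the other side of the dagger is automatic, $(A[X,B]P)(A[X,B]P)^\dag = A[X,B]PP^\dag A[X,B]^\dag$; but since we want $PP^\dag = I$ we instead clear using row operations on $A[X,B]$, which by Lemma~\ref{lem:delta} and the already-established invariance leave $\delta(A[X,B]A[X,B]^\dag)$ unchanged and put $A[X,B]$ into the form $\bigl[\begin{smallmatrix} A' & 0\\ 0 & a'\end{smallmatrix}\bigr]$ with $|a'| = |a|$ (the row operations subtract multiples of the first $r-1$ rows, hence cannot alter the last column, and the last-row entries over $B\setminus s$ get cleared because the corresponding columns of $A'$ still span). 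Then $A[X,B]A[X,B]^\dag = \bigl[\begin{smallmatrix} A'A'^\dag & 0\\ 0 & a'\bar{a'}\end{smallmatrix}\bigr]$ and Lemma~\ref{lem:delta}\eqref{it:coldevelop} gives $\delta(A[X,B]A[X,B]^\dag) = |a'|^2\,\delta(A'A'^\dag) = (a\bar a)\,\delta(A'A'^\dag)$, as claimed.
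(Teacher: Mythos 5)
Your argument is correct in substance but takes a genuinely more roundabout route than the paper's. The paper never manipulates the product matrix at all: it applies Lemma~\ref{lem:delta}(\ref{it:mul}) to split $\delta(A[X,B]A[X,B]^\dag)$ into $\delta(A[X,B])\,\delta(A[X,B]^\dag)$, then uses the column-development property (\ref{it:coldevelop}) (together with (\ref{it:transpose}) for the conjugate-transpose factor) on each factor separately --- the $s$-column of $A[X,B]$ is zero except in row $r$, so each factor contributes $|a|\,\delta(A')$ with $A' := A[X\setminus r,B\setminus s]$ --- and finally recombines with (\ref{it:mul}) to obtain $(a\bar a)\,\delta(A'A'^\dag)$. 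That three-line computation needs no row operations, no congruence of $A[X,B]A[X,B]^\dag$, and no invertibility hypothesis, so the Schur-complement bookkeeping and the singular case that occupy most of your write-up never arise. Your route --- clearing the bottom row of $A[X,B]$ by transvections and then peeling off the corner of the resulting block-diagonal product --- does work, but the clearing step requires $A[r,B\setminus s]$ to lie in the left row space of $A'$; since $B$ is an arbitrary $r$-subset of $E$, the block $A'$ may be singular, and your justification that ``the corresponding columns of $A'$ still span'' is then false. You do note earlier that both sides vanish in the singular case (correctly so, since $\delta(A[X,B]) = |a|\,\delta(A') = 0$ by (\ref{it:coldevelop})), so your argument can be patched by that case split; but both the patch and the Schur-complement worries are made unnecessary by splitting the product multiplicatively at the outset, which is exactly what the paper does.
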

	\begin{subproof}
		\begin{align}
			\delta(A[X,B]A[X,B]^\dag) & = \delta(A[X,B])\delta(A[X,B]^\dag)\\
			                          & = \delta(a)\delta(A[X\setminus r, B\setminus s])\delta(\bar{a})\delta(A[X\setminus r, B\setminus s]^\dag)\\
																& = (a\bar{a}) \delta(A[X\setminus r, B\setminus s]A[X\setminus r,B\setminus s]^\dag).
		\end{align}
		All equalities follow directly from Lemma \ref{lem:delta}.
	\end{subproof}
	Now let $Q' := A[X,E\setminus s]A[X,E\setminus s]^\dag$, and let $q := Q_{rr}$.
	\begin{claim}\label{cl:Qp}
    $\delta(A[X,E\setminus s]A[X,E\setminus s]^\dag) = (q - a\bar{a})\delta(Q')$.		
	\end{claim}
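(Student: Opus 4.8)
The plan is to read off the claim directly from the two normalizations already in force at this point in the proof: $Q:=AA^{\dag}$ is diagonal, and $A[X\setminus r,s]=0$, so the $s$-th column of $A$ is the vector whose only nonzero coordinate is $a=A_{rs}$, sitting in row $r$. Concretely I would show $\delta(Q')=(q-a\bar a)\,\delta\bigl(Q'[X\setminus r,X\setminus r]\bigr)$, which is the content of the claim (the reduced matrix $Q'[X\setminus r,X\setminus r]$ being exactly $A[X\setminus r,E\setminus s]\,A[X\setminus r,E\setminus s]^{\dag}$, so that it matches the term produced by Claim~\ref{cl:splitsum2} after applying the induction hypothesis). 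First I would split $AA^{\dag}$ as a sum over the columns of $A$ and isolate the column indexed by $s$:
\[
	AA^{\dag}=A[X,E\setminus s]\,A[X,E\setminus s]^{\dag}+A[X,\{s\}]\,A[X,\{s\}]^{\dag}=Q'+A[X,\{s\}]\,A[X,\{s\}]^{\dag}.
\]
Since $A[X,\{s\}]$ has the single nonzero coordinate $a$ in position $r$, the term $A[X,\{s\}]A[X,\{s\}]^{\dag}$ has $(i,j)$-entry $A_{is}\overline{A_{js}}$, which equals $a\bar a$ when $i=j=r$ and $0$ otherwise. Hence $Q'$ agrees with the diagonal matrix $Q$ in every entry except the $(r,r)$-entry, where $Q'_{rr}=Q_{rr}-a\bar a=q-a\bar a$; in particular $Q'$ is again diagonal, and deleting its last row and column yields $Q'[X\setminus r,X\setminus r]=A[X\setminus r,E\setminus s]\,A[X\setminus r,E\setminus s]^{\dag}$.

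Next I would invoke part~\ref{it:coldevelop} of Lemma~\ref{lem:delta}: because $Q'$ is diagonal, all entries of $Q'$ in column $r$ strictly above the diagonal vanish, so that property gives
\[
	\delta(Q')=|Q'_{rr}|\,\delta\bigl(Q'[X\setminus r,X\setminus r]\bigr)=|q-a\bar a|\;\delta\bigl(A[X\setminus r,E\setminus s]\,A[X\setminus r,E\setminus s]^{\dag}\bigr).
\]
Finally, $q-a\bar a=Q_{rr}-|a|^{2}=\sum_{e\in E\setminus s}|A_{re}|^{2}$ is a nonnegative real number, so $|q-a\bar a|=q-a\bar a$, and the claim follows.

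I do not expect a genuine obstacle: the computation is short, and the only points needing care are (i) observing that the lone normalization $A[X\setminus r,s]=0$ is precisely what makes the column-$s$ contribution to $AA^{\dag}$ a rank-one matrix supported at $(r,r)$ — without it $Q'$ need not remain diagonal and the one-line expansion of $\delta$ would be unavailable — and (ii) checking $q-a\bar a\ge 0$ before dropping the absolute value. This claim is then the base step which, combined with Claim~\ref{cl:splitsum2} and the induction hypothesis applied separately to the two halves $\{B:s\in B\}$ and $\{B:s\notin B\}$ of the Cauchy--Binet sum, closes the induction on $r+s$.
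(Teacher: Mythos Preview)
Your argument is correct and follows essentially the same route as the paper: observe that removing column $s$ from $A$ changes $AA^{\dag}$ only in the $(r,r)$ entry (because $A[X\setminus r,s]=0$), so the resulting matrix is still diagonal with $(r,r)$ entry $q-a\bar a$, and then apply Lemma~\ref{lem:delta}\ref{it:coldevelop}. You are in fact more careful than the paper on two points: you correctly disentangle the overloaded notation (the $Q'$ appearing on the right of the claim must mean the $(X\setminus r)\times(X\setminus r)$ block, i.e.\ $A[X\setminus r,E\setminus s]A[X\setminus r,E\setminus s]^{\dag}$, as the subsequent use in \eqref{eq:Q} confirms), and you explicitly justify dropping the absolute value via $q-a\bar a=\sum_{e\in E\setminus s}|A_{re}|^{2}\ge 0$, which the paper leaves implicit.
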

	\begin{subproof}
		Note that $Q'_{rr} = Q_{rr} - a\bar{a}$. Moreover, since $A[X\setminus r, e] = 0$, all other entries of $Q'$ are equal to those in $Q$. The result then follows from Lemma \ref{lem:delta}.
	\end{subproof}
	Now we deduce
	\begin{align}
		& \sum_{B\subseteq E :\ |B| = r} \delta(A[X,B]A[X,B]^\dag) \\
		= \ & \sum_{B\subseteq E :\ |B| = r,\ s \not\in B}\delta(A[X,B]A[X,B]^\dag)\notag \\
		& + \sum_{B\subseteq E :\ |B| = r,\ s \in B}\delta(A[X,B]A[X,B]^\dag) \label{eq:splitsum}\\
		= \ & \sum_{B\subseteq E :\ |B| = r,\ s \not\in B}\delta(A[X,B]A[X,B]^\dag)\notag \\
		& + \sum_{B\subseteq E :\ |B| = r,\ s \in B}(a\bar{a})\delta(A[X\setminus r,B\setminus s]A[X\setminus r,B\setminus s]^\dag) \label{eq:splitsum2}\\
	  =\  & \delta(A[X,E\setminus s] A[X,E\setminus s]^\dag)\notag \\
	  & + (a\bar{a})\delta(A[X\setminus r, E\setminus s]A[X\setminus r, E\setminus s]^\dag)\label{eq:CBinduc}\\
	  =\  & (q-a\bar{a})\delta(Q') + (a\bar{a})\delta(Q') \label{eq:Q}\\
	  =\  & \delta(A A^\dag). \label{eq:CBfinal}& 
	\end{align}
	Here \eqref{eq:splitsum} is obvious, and \eqref{eq:splitsum2} uses Claim \ref{cl:splitsum2}. After that, \eqref{eq:CBinduc} follows from the induction hypothesis, \eqref{eq:Q} follows from Claim \ref{cl:Qp}, and \eqref{eq:CBfinal} is obvious.
\end{proof}
\addtocounter{theorem}{2}
We conclude
\begin{corollary}\label{cor:basiscount}
	Let $A$ be a strong $\QU$-matrix. Then $\delta(AA^\dag)$ equals the number of bases of $M[A]$.
\end{corollary}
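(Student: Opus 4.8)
The plan is to combine the Cauchy--Binet identity of Theorem~\ref{thm:cauchybinet} with the multiplicativity properties of $\delta$ from Lemma~\ref{lem:delta}, applied to the matrix $A$ in the form $[I\ A]$ (recall that a strong $\QU$-matrix is by definition one for which $[I\ A]$ is a weak $\QU$-matrix, with $M[A] := M(\rowspan([I\ A]))$). So first I would set $A' := [I\ A]$, an $X\times E$ matrix over $\quat$ with $|X| = r = \rank(M[A])$ and $E = X\cup Y$, so that by Theorem~\ref{thm:skewpfinvertible} a set $B\subseteq E$ with $|B| = r$ is a basis of $M[A]$ if and only if $A'[X,B]$ is invertible.

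Next I would apply Theorem~\ref{thm:cauchybinet} to $A'$, giving
\begin{align}
	\delta(A'(A')^\dag) = \sum_{B\subseteq E:\ |B| = r} \delta(A'[X,B]\, A'[X,B]^\dag).
\end{align}
The key observation is that each summand equals $1$ when $B$ is a basis and $0$ otherwise. If $B$ is a basis, then $A'[X,B]$ is invertible, so by part~\eqref{it:mul} and part~\eqref{it:transpose} of Lemma~\ref{lem:delta}, $\delta(A'[X,B]\,A'[X,B]^\dag) = \delta(A'[X,B])\delta(A'[X,B]^\dag) = \delta(A'[X,B])^2$; and since $\delta$ is multiplicative on products of permutation and transvection matrices (each of $\delta$-value $1$) and $\delta(F^{-1}) = \delta(F)^{-1}$, an invertible quaternionic matrix factors into such elementary matrices, forcing $\delta(A'[X,B]) = 1$. (Alternatively, one can row-reduce $A'[X,B]$ to the identity by pivots and invoke Corollary~\ref{cor:pivot} together with $\delta(I) = 1$ and $|A'_{xy}| = 1$ at each step, since $A'$ is a $\QU$-matrix and all entries encountered lie in $S\quat\cup\{0\}$.) If $B$ is not a basis, then $A'[X,B]$ is singular, hence so is $A'[X,B]\,A'[X,B]^\dag$; applying $z_2\circ\phi$, the image is a singular $2r\times 2r$ complex matrix, so its determinant is $0$ and $\delta(A'[X,B]\,A'[X,B]^\dag) = 0$.

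Finally, on the left-hand side I would observe that $A'(A')^\dag = [I\ A][I\ A]^\dag = I + AA^\dag$; but actually it is cleaner to note that since $A'[X,B]$ for $B = X$ is the identity, the whole argument can be phrased with $A'$ in place of $A$ throughout, and the statement in the Corollary identifies $\delta(AA^\dag)$ with $\delta([I\ A][I\ A]^\dag)$ by the convention that strong $\QU$-matrices are studied via $[I\ A]$ — so the sum on the right counts exactly the bases of $M[A]$. The main obstacle I anticipate is the clean justification that $\delta(A'[X,B]) = 1$ for every basis $B$: one must be careful that the elementary-matrix factorization genuinely works over $\quat$ (it does, via Gaussian elimination, since invertible matrices over a skew field are products of transvections and permutations) and that the $\QU$-matrix hypothesis guarantees all pivot entries have norm $1$, so no scaling factors $\neq 1$ sneak in. Everything else is a bookkeeping consequence of Lemma~\ref{lem:delta} and Theorems~\ref{thm:cauchybinet} and~\ref{thm:skewpfinvertible}.
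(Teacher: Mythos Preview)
Your overall strategy—apply Theorem~\ref{thm:cauchybinet} and argue that each summand is $0$ or $1$—is exactly what the paper does. But two steps go wrong.

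\medskip
\textbf{You are working with the wrong matrix.} In the corollary, $M[A]$ means $M(\rowspan(A))$ (see the sentence after Theorem~\ref{thm:skewpfinvertible}); the bases are subsets of the column set of $A$ itself, and the paper applies Theorem~\ref{thm:cauchybinet} directly to $A$. The adjective ``strong'' is there solely to guarantee that every entry of $A$, and of every pivot of $A$, lies in $S\quat\cup\{0\}$, so that each pivot step multiplies $\delta$ by $1$. It is \emph{not} an instruction to replace $A$ by $[I\ A]$. Your attempted reconciliation at the end is not a convention; it is a genuine error: $[I\ A][I\ A]^\dag = I + AA^\dag$, and $\delta(I+AA^\dag)\neq\delta(AA^\dag)$ in general. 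Concretely, for $A=[1\ 1]$ one has $\delta(AA^\dag)=2$ (the number of bases of $U_{1,2}=M[A]$), whereas $\delta([I\ A][I\ A]^\dag)=3$ (the number of bases of $U_{1,3}$). Simply drop the passage to $[I\ A]$ and run your argument on $A$.

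\medskip
\textbf{The elementary-matrix factorization does not force $\delta=1$.} Invertible matrices over $\quat$ do \emph{not} factor into transvections and permutations alone; one also needs a diagonal matrix $\mathrm{diag}(d,1,\ldots,1)$, and $\delta$ of that matrix is $|d|$, not $1$. (For instance $\delta([2])=2$.) So your primary argument that $\delta(A[X,B])=1$ for every basis $B$ is circular without the $\QU$ hypothesis. Your parenthetical alternative—row-reduce $A[X,B]$ to the identity by successive pivots, using Corollary~\ref{cor:pivot} and the fact that each pivot entry has norm $1$ because $A$ (and every matrix obtained from it by pivoting, by Lemma~\ref{lem:skewpivot}) is a strong $\QU$-matrix—\emph{is} correct, and is precisely the paper's proof. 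Promote that argument to the main one and discard the factorization claim.
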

\begin{proof}
	Let $X, E$ be finite sets with $|E| \geq |X|$, and let $A$ be a strong $X\times E$ $\QU$-matrix.
	\begin{claim}\label{cl:detonequat}
		Let $B\subseteq E$ with $|B| = |X|$. Then
		\begin{align}
			\delta(A[X,B]) = \left\{ \begin{array}{ll}
					1 & \qquad \textrm{ if } $B$ \textrm{ basis of } M[A];\\
					0 & \qquad \textrm{ otherwise}.
				\end{array}\right.
		\end{align}
	\end{claim}
	\begin{subproof}
		Note that $A[X,B]$ is invertible if and only if $z_2(\phi(A[X,B]))$ is invertible. It follows from Theorem \ref{thm:skewpfinvertible} that $\delta(A[X,B]) = 0$ if $B$ is not a basis. Now let $B$ be a basis, and pick $i \in X, e \in B$ such that $a := A_{ie} \neq 0$. Then $|a| = 1$. Define $X' := X\setminus i$, define $b := A[X', e]$, and define
		\begin{align}
 				F_e := \kbordermatrix{ & i & X'\\
		                   e  & a^{-1} & 0 \cdots 0\\
		                      & \phantom{0} &\\
		                   X' & -b a^{-1} &    I_{X'}\\
		                      & \phantom{0} &
		                   }.
		\end{align}
		From Lemma \ref{lem:delta} we conclude $\delta(F_e) = |a^{-1}| = 1$. But the column indexed by $i$ in $(F_e\,A)[X,B]$ has exactly one nonzero entry, which is equal to 1. It follows that there exists a matrix $F$ with $\delta(F) = 1$, such that $(F\,A)[X,B]$ is the identity matrix. But then $\delta(F\,A[X,B]) = \delta(A[X,B]) = 1$, as desired.
	\end{subproof}
	The result follows immediately from Claim \ref{cl:detonequat} and Theorem \ref{thm:cauchybinet}.
\end{proof}

For a more detailed result we define
\begin{align}
	P_A := A^\dag (AA^\dag)^{-1}A
\end{align}
for every matrix over the quaternions of full row rank. This matrix has many attractive properties, such as the following:
\begin{lemma}\label{lem:projectionbasischange}
	Let $A$ be a matrix over the quaternions of full row rank $r$, and let $F$ be an invertible $r\times r$ matrix over the quaternions. Then
	\begin{align}
		P_{FA} = P_A.
	\end{align}
\end{lemma}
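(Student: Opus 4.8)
The plan is to expand $P_{FA}$ directly from the definition and simplify using only the elementary algebra of the conjugate transpose on quaternionic matrices. Two facts are needed. First, $\dag$ is an anti-homomorphism: $(MN)^\dag = N^\dag M^\dag$ for matrices $M,N$ over $\quat$ of compatible sizes, which follows from $(MN)^T = N^T M^T$ together with $\overline{pq} = \bar q\,\bar p$ in $\quat$, applied entrywise. Second, if $F$ is invertible then so is $F^\dag$, with $(F^\dag)^{-1} = (F^{-1})^\dag$; this follows by applying $\dag$ to $F^{-1}F = I$.

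With these in hand the argument is immediate. From $(FA)^\dag = A^\dag F^\dag$ we get $(FA)(FA)^\dag = F(AA^\dag)F^\dag$. Each of $F$, $F^\dag$, and $AA^\dag$ is invertible, the last because $A$ has full row rank (this is exactly what makes $P_A$ well-defined; concretely, one may pass through the ring homomorphism $\phi$ from \eqref{eq:phi} and note that $z_2(\phi(A))\,z_2(\phi(A))^\dag$ is a Hermitian positive-definite complex matrix precisely when $A$ has full row rank). Reversing the order of factors in the inverse of a triple product gives $\big((FA)(FA)^\dag\big)^{-1} = (F^\dag)^{-1}(AA^\dag)^{-1}F^{-1}$, and substituting into the definition of $P_{FA}$ yields
\begin{align}
	P_{FA} = A^\dag F^\dag (F^\dag)^{-1}(AA^\dag)^{-1} F^{-1} F A = A^\dag (AA^\dag)^{-1} A = P_A, \notag
\end{align}
since the factors $F^\dag(F^\dag)^{-1}$ and $F^{-1}F$ cancel.

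The only point requiring any care is that the non-commutativity of $\quat$ does not disturb the order-reversal identities used above, namely $(MN)^\dag = N^\dag M^\dag$ and $(XYZ)^{-1} = Z^{-1}Y^{-1}X^{-1}$; the first is the genuine content and is checked entrywise via conjugation in $\quat$, the second is formal and holds in any ring. I also need the invertibility of $AA^\dag$, which I expect to be the mildest obstacle: it is available from the correspondence, already used in Corollary~\ref{cor:basiscount}, between invertibility over $\quat$ and invertibility over $\C$ under $z_2\circ\phi$, where $AA^\dag$ is Hermitian and nonsingular exactly when $A$ has full row rank. Once these are in place the proof is the two-line cancellation above.
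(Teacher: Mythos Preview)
Your proof is correct and follows exactly the same route as the paper: expand $P_{FA}$ from the definition, use $(FA)^\dag = A^\dag F^\dag$ and $(F(AA^\dag)F^\dag)^{-1} = (F^\dag)^{-1}(AA^\dag)^{-1}F^{-1}$, and cancel. You supply more justification than the paper does (the anti-homomorphism property of $\dag$ and the invertibility of $AA^\dag$), but the computation is identical.
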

\begin{proof}
	\begin{align}
		P_{FA} & = (FA)^\dag (FA (FA)^\dag)^{-1} FA\\
					 & = A^\dag F^\dag (F A A^\dag F^\dag)^{-1} F A\\
					 & = A^\dag F^\dag (F^\dag)^{-1} (A A^\dag)^{-1} F^{-1} F A\\
					 & = P_A.
	\end{align}
\end{proof}

It follows that $P_A$ is an invariant of $\rowspan(A)$. In fact, if we may choose $A$ such that its rows are orthonormal. Then $q P_A$ is the orthogonal projection of rowvector $q$ onto the row space of $A$. For this reason, we will refer to the projection matrix $P_C$ of a chain group $C$ over $\quat$.

The following lemma relates contraction in the chain group (cf. Definition \ref{def:chainminor}) to pivoting in the projection matrix (cf. Definition \ref{def:skewpivot}):
\begin{lemma}\label{lem:projectionmatrixminor}
	Let $C$ be a $\QU$-chain group on $E$, and let $e\in E$, not a loop of $M(C)$. Then $P_{C\contract e} = (P_C)^{ee}[E\setminus e, E\setminus e]$.
\end{lemma}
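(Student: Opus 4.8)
The plan is to translate the claimed identity into a statement about chain groups by using the characterization of the projection matrix as an invariant of the row space, together with Lemma~\ref{lem:dualchaingr} (orthogonality) and the description of contraction from Definition~\ref{def:chainminor}. First I would pick a generator matrix $A$ for $C$; by Lemma~\ref{lem:weakskewstrong} and Corollary~\ref{cor:weakskewstrong} I may assume $A = [I\ D]$ with the columns indexed so that $e$ is one of the identity columns, say $e$ indexes the first unit column. Since $e$ is not a loop, the column of $P_C$ indexed by $e$ is nonzero, so the pivot $(P_C)^{ee}$ is defined. The goal is then to show that the matrix obtained by deleting row and column $e$ from $(P_C)^{ee}$ is exactly the projection matrix of the chain group $C\contract e$.

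The key observation is that $C\contract e$ is, by Definition~\ref{def:chainminor}, the set of restrictions to $E\setminus e$ of those chains in $C$ whose $e$-coordinate vanishes; equivalently, if $A = [I\ D]$ with $e$ the first column, then a generator matrix for $C\contract e$ is obtained by taking the rows of $A$ indexed by $B\setminus e$ (where $B$ is the identity-column basis), restricted to $E\setminus e$ — that is, we simply drop the row and column corresponding to $e$ from $A$. So I would first establish the analogous combinatorial fact at the level of generator matrices: if $A'$ is $A$ with row $e$ and column $e$ deleted, then $\rowspan(A') = C\contract e$. Then the claim reduces to the purely linear-algebraic identity
\begin{align}
	P_{A'} = (P_A)^{ee}[E\setminus e, E\setminus e],
\end{align}
which says that ``orthogonal projection onto a subspace, then restrict to the coordinate hyperplane $x_e = 0$'' agrees with ``pivot the projection matrix at $ee$ and delete row/column $e$.''

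To prove that identity I would use Lemma~\ref{lem:projectionbasischange}: $P_A$ depends only on $\rowspan(A)$, so I am free to choose a convenient basis of the row space. Choose a generator matrix in which the first row has a $1$ in the $e$-coordinate and the remaining rows have $0$ there (possible since $e$ is not a loop, by a single pivot/row operation); call this matrix $\begin{bsmallmatrix} 1 & v^\dag \\ 0 & A' \end{bsmallmatrix}$ after permuting $e$ to the front, where $A'$ generates $C\contract e$. Computing $AA^\dag$, $(AA^\dag)^{-1}$, and then $P_A = A^\dag(AA^\dag)^{-1}A$ blockwise, the $(E\setminus e)\times(E\setminus e)$ block of $P_A$ comes out as $P_{A'}$ plus a rank-one correction coming from the first row, and the $e$-row and $e$-column of $P_A$ encode exactly that rank-one correction together with the scalar $P_A[e,e]$; carrying out the pivot formula of Definition~\ref{def:skewpivot} on $P_A$ at position $ee$ and deleting row/column $e$ then cancels precisely that rank-one term, leaving $P_{A'}$. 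Quaternion (non)commutativity is harmless here because every product that appears is of the shape $w^\dag(\cdot)w$ or is manipulated exactly as in the proof of Lemma~\ref{lem:projectionbasischange}; one only has to be careful to keep conjugate-transposes on the correct side.

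The main obstacle I anticipate is bookkeeping rather than conceptual: making the block decomposition and the pivot formula line up, i.e. verifying that the rank-one Schur-complement correction appearing in the $(E\setminus e)$-block of $P_A$ is literally $-P_A[E\setminus e, e]\,(P_A[e,e])^{-1}\,P_A[e, E\setminus e]$, so that subtracting it (which is what the pivot does, since the ``otherwise'' entry of $A^{xy}$ is $A_{uv}-A_{uy}(A_{xy})^{-1}A_{xv}$) recovers $P_{A'}$ exactly. I would handle this by first doing the commutative ($\sqrt[6]{1}$) case mentally to fix the shape of the formula, then re-deriving it over $\quat$ keeping all daggers explicit; alternatively, one can invoke that $P_A$ is idempotent and self-adjoint ($P_A^2 = P_A$, $P_A^\dag = P_A$), which forces the off-diagonal/diagonal blocks into exactly the Schur-complement relation needed, sidestepping the direct inversion of $AA^\dag$.
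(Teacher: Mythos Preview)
Your plan is essentially the paper's proof: normalize a generator matrix $A$ so that column $e$ has a single nonzero entry, recognize the remaining rows (restricted to $E\setminus e$) as a generator of $C\contract e$, and then verify the pivot identity by a direct block computation of $P_A$. The paper dissolves exactly the bookkeeping obstacle you anticipate by one further row operation, making $AA^\dag$ diagonal; with $(AA^\dag)^{-1}$ diagonal with entries $d_1,\ldots,d_r$ one has $Q_{xy}=\sum_i \bar{A_{ix}}\,d_i\,A_{iy}$, the entries $Q_{xe},Q_{ey},Q_{ee}$ involve only row $r$, and the Schur-complement subtraction $Q_{xy}-Q_{xe}Q_{ee}^{-1}Q_{ey}$ visibly kills the $i=r$ summand, leaving $P_{A'}$.
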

\begin{proof}
	Let $X := \{1,\ldots,r\}$, and let $A$ be an $X\times E$ weak $\QU$-matrix such that $C = \rowspan(A)$. Since the column $A[X,e]$ contains a nonzero entry, we may assume, by row operations, that $A_{re} = 1$, and $A[X\setminus r, e] = 0$. Moreover, by additional row operations we may assume that $AA^\dag$ is a diagonal matrix. For ease of notation, define $a := A[r,E]$ and $A' := A[X\setminus r, E\setminus e]$. Note that $\rowspan(A') = C\contract e$. Finally, let $Q := P_C$, and let $Q' := P_{C\contract e}$.
	
	Let $d_1, \ldots, d_r$ be the diagonal entries of the diagonal matrix $(AA^\dag)^{-1}$ (so $d_1, \ldots, d_{r-1}$ are the diagonal entries of $(A'A'^\dag)^{-1}$). By definition,
	\begin{align}
		Q_{xy} = \sum_{i=1}^r \bar{A_{ix}} d_i A_{iy}.
	\end{align}
	In particular,
	\begin{align}
		Q_{xe} & = \bar{A_{rx}} d_r A_{re} = \bar{A_{rx}} d_r;\\
		Q_{ey} & = \bar{A_{re}} d_r A_{ry} = d_r A_{ry};\\
		Q_{ee} & = d_r.
	\end{align}
	Now it follows from Definition \ref{def:skewpivot} that, for $x,y \in E\setminus e$,
	\begin{align}
		(Q^{ee})_{xy} & = Q_{xy} - Q_{xe} Q_{ee}^{-1} Q_{ey}\\
		              & = \sum_{i=1}^r \bar{A_{ix}} d_i A_{iy} - \bar{A_{rx}} d_r d_r^{-1} d_r A_{ry}\\
									& = \sum_{i=1}^{r-1} \bar{A_{ix}} d_i A_{iy}.
	\end{align}
	Hence $Q^{ee}[E\setminus e,E\setminus e] = Q'$, as claimed.	
\end{proof}

Our final result is the following refinement of Corollary \ref{cor:basiscount}.
\begin{theorem}\label{thm:detailedbasiscount}
	Let $C$ be a $\QU$-chain group on $E$, and let $F \subseteq E$. Then
	\begin{align}
		\delta(P_C[F,F]) = \frac{|\{B \subseteq E : B \textrm{ basis of } M(C) \textrm{ and } F \subseteq B\}|}{|\{B \subseteq E : B \textrm{ basis of } M(C)\}|}.
	\end{align}
\end{theorem}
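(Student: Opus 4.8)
The plan is an induction on $|F|$ that peels off one element $e\in F$ at a time, using the pivot identity for $\delta$ (Corollary~\ref{cor:pivot}) together with the identity $P_{C\contract e} = (P_C)^{ee}[E\setminus e,E\setminus e]$ of Lemma~\ref{lem:projectionmatrixminor}. This reduction is essentially formal; what makes the theorem true are two facts about $\QU$-chain groups, both relying on the defining feature of $\QU$ that every entry of a primitive chain has norm $1$. To prepare, I would pick a generator matrix for $C$ and orthonormalize its rows by row operations (Gram--Schmidt over $\quat$); by Lemma~\ref{lem:weakskewmatops} this produces a weak $\QU$-matrix $A$ with $\rowspan(A) = C$ and $AA^\dag = I$, so that $P_C = A^\dag A$. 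Write $N$ for the number of bases of $M(C)$, and $X$ for the index set of the rows of $A$.

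\emph{Fact 1: $\delta(A[X,B])^2 = 1/N$ for every basis $B$ of $M(C)$.} First, $\delta(A[X,B])$ is constant over all bases. Indeed, if $B' = B\symdiff\{f,g\}$ with $f\in E\setminus B$, $g\in B$ is again a basis, then in $A' := A[X,B]^{-1}A$ (with rows indexed by $B$) the rows are, after the normalization $A'[X,B] = I$, the $G$-primitive $B$-fundamental cocircuit chains; hence the entry $A'_{gf}$, which lies in $S\quat\cup\{0\}$, is nonzero (as $B'$ is a basis), so of norm $1$. Since $A[X,B'] = A[X,B]\cdot A'[X,B']$ and $\delta(A'[X,B']) = |A'_{gf}| = 1$ by Lemma~\ref{lem:delta}, we get $\delta(A[X,B']) = \delta(A[X,B])$; connectivity of the basis-exchange graph gives the claim, say with common value $\alpha$. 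As $\delta(A[X,B']) = 0$ whenever $B'$ is not a basis (Theorem~\ref{thm:skewpfinvertible}), Cauchy--Binet (Theorem~\ref{thm:cauchybinet}) applied to $AA^\dag = I$ reads $1 = \delta(I) = N\alpha^2$, so $\alpha^2 = 1/N$.

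\emph{Fact 2: $(P_C)_{ee} = |\{B : e\in B\}|/N$}; I expect this to be the crux. Here I would mimic the classical leverage-score computation. Append to $A$ the row with a single $1$ in coordinate $e$, obtaining an $(|X|+1)\times E$ matrix $M$ with $MM^\dag = \mm{I}{A[X,e]}{A[X,e]^\dag}{1}$. On one hand, clearing the border of $MM^\dag$ by transvections and then applying Lemma~\ref{lem:delta} gives $\delta(MM^\dag) = 1 - A[X,e]^\dag A[X,e] = 1 - (P_C)_{ee}$ (note $(P_C)_{ee}$ is a real number in $[0,1]$, being a diagonal entry of the orthogonal projection $A^\dag A$). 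On the other hand, Cauchy--Binet gives $\delta(MM^\dag) = \sum_{B\subseteq E,\,|B|=|X|+1}\delta(M[\cdot,B])^2$, and a term here vanishes unless $e\in B$, in which case the appended row has a single nonzero entry and develops away, leaving $\delta(M[\cdot,B]) = \delta(A[X,B\setminus e])$. Thus $\delta(MM^\dag) = \sum_{B'\not\ni e,\,|B'|=|X|}\delta(A[X,B'])^2$, which by Fact~1 equals $(N - |\{B : e\in B\}|)/N$. Equating the two expressions gives Fact~2. (The degenerate case $|E| = \rank(M(C))$, where $A$ is unitary and $P_C = I$, must be checked separately but is immediate.)

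The induction now proceeds as follows. For $F = \emptyset$ both sides equal $1$. For $F\ne\emptyset$ pick $e\in F$. If $e$ is a loop of $M(C)$, then $A[X,e] = 0$, so the $e$-th row of $P_C = A^\dag A$ vanishes, $P_C[F,F]$ has a zero row and $\delta(P_C[F,F]) = 0$, which matches the right-hand side since no basis contains $e$. Otherwise $(P_C)_{ee}\ne 0$, and Corollary~\ref{cor:pivot} gives $\delta(P_C[F,F]) = (P_C)_{ee}\cdot\delta\bigl((P_C[F,F])^{ee}[F\setminus e,F\setminus e]\bigr)$; by the entrywise formula of Definition~\ref{def:skewpivot} the matrix in the last factor equals $(P_C^{ee})[F\setminus e,F\setminus e]$, which is $P_{C\contract e}[F\setminus e,F\setminus e]$ by Lemma~\ref{lem:projectionmatrixminor}. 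Applying the inductive hypothesis to the $\QU$-chain group $C\contract e$, whose matroid is $M(C)\contract e$ (Theorem~\ref{thm:minors}) and whose bases are exactly the sets $B\setminus e$ with $B$ a basis of $M(C)$ containing $e$, and then invoking Fact~2, yields
\begin{align*}
	\delta(P_C[F,F]) \;=\; (P_C)_{ee}\cdot\frac{|\{B : F\subseteq B\}|}{|\{B : e\in B\}|} \;=\; \frac{|\{B : F\subseteq B\}|}{N},
\end{align*}
as desired. The main obstacle is Facts~1 and~2 --- especially the leverage-score identity of Fact~2 --- which is exactly where the norm-$1$ structure of the quaternionic unimodular partial field is indispensable; granted these, the remainder is bookkeeping with pivots and elementary matroid theory.
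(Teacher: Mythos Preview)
Your proof is correct, and the inductive skeleton---pivot via Corollary~\ref{cor:pivot}, then Lemma~\ref{lem:projectionmatrixminor}, then induction on $|F|$---is exactly what the paper does. The only real difference is in how you obtain the identity $(P_C)_{ee} = |\{B : e\in B\}|/N$. You take $A$ with orthonormal rows, first prove Fact~1 (which is Corollary~\ref{cor:basiscount} rescaled), and then derive Fact~2 by appending an indicator row and applying Cauchy--Binet (Theorem~\ref{thm:cauchybinet}) to the enlarged matrix. The paper instead chooses $A$ with $A_{re}=1$, $A[X\setminus r,e]=0$, and $AA^\dag$ diagonal; then $(P_C)_{ee}$ drops out immediately as $\delta(A'A'^\dag)/\delta(AA^\dag)$, and one invocation of Corollary~\ref{cor:basiscount} on each factor finishes the job. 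Your leverage-score argument is a pleasant and self-contained alternative, but it is slightly more work than necessary: the paper's choice of $A$ yields the same conclusion without the detour through an auxiliary $(r+1)$-row matrix and the attendant edge case $|E|=r$.
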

This result was proven for regular and $\sru$-matroids by Lyons \cite{Ly03}, who used the exterior algebra in his proof (see Whitney \cite[Chapter I]{Whi57} for one possible introduction). For graphs and $|F| = 1$, the result dates back to Kirchhoff \cite{Kir1847}, whereas the case $|F| = 2$ was settled by Brooks, Smith, Stone, and Tutte \cite{BSST40} in their work on squaring the square. Burton and Pemantle \cite{BP93} showed the general formula for graphs.

\begin{proof}
	Let $C$ be a $\QU$-chain group on $E$, and let $F \subseteq E$. We will prove the result by induction on $|F|$. Since the determinant of the empty matrix equals $1$, the case $F = \emptyset$ is trivial. If an element $e \in F$ is a loop of $M(C)$, then $P_C[F,F]$ contains an all-zero row (and column), and hence $\delta(P_C[F,F]) = 0$.
	
	Now pick any $e\in F$. Let $A$ be a weak $\QU$-matrix such that $C = \rowspan(A)$. By the above the column $A[X,e]$ contains a nonzero. By row operations we may assume that $A_{re} = 1$, an $A[X\setminus r, e] = 0$. Moreover, by additional row operations we may assume that $AA^\dag$ is a diagonal matrix. For ease of notation, define $a := A[r,E]$ and $A' := A[X\setminus r, E\setminus e]$. Then $\rowspan(A') = C\contract e$. Moreover, let $Q := P_C$, and let $Q' := P_{C\contract e}$. Finally, let $F' := F\setminus e$. For a row vector $v$ we write $|v| := \delta(v v^\dag)$.
	
	\begin{claim}\label{cl:vectorlength}
			$|a| = \delta(AA^\dag)/\delta(A'A'^\dag)$.
	\end{claim}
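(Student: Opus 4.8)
Claim \ref{cl:vectorlength} asserts $|a| = \delta(AA^\dag)/\delta(A'A'^\dag)$, where $a = A[r,E]$ is the last row of $A$, $A' = A[X\setminus r, E\setminus e]$, and — crucially — the row operations have already been applied so that $A_{re} = 1$, $A[X\setminus r, e] = 0$, and $Q := AA^\dag$ is diagonal.

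\textbf{Plan.} The idea is that once $AA^\dag$ is diagonal, the matrix $AA^\dag$ is block-lower-triangular with respect to splitting off the last row, so $\delta$ factors using property \ref{it:coldevelop} of Lemma \ref{lem:delta}. First I would observe that, because $AA^\dag$ is diagonal and $A[X\setminus r,e] = 0$ while $A_{re}=1$, the $(r,r)$ entry of $AA^\dag$ is $(AA^\dag)_{rr} = a a^\dag = |a|^2$ (here I'm using that $\delta$ of a single entry is its quaternionic norm, part \ref{it:single}, so $\delta([aa^\dag]) = |a|^2$; alternatively $aa^\dag = \sum_f a_f \bar a_f = \sum_f |a_f|^2$ is a positive real). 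Next, since $AA^\dag$ is diagonal, in particular the off-diagonal entries in row $r$ and column $r$ vanish, so $(AA^\dag)[X\setminus r, r] = 0$; hence part \ref{it:coldevelop} of Lemma \ref{lem:delta} applies and gives
\begin{align}
	\delta(AA^\dag) = |(AA^\dag)_{rr}|\,\delta\big((AA^\dag)[X\setminus r, X\setminus r]\big) = |a|^2\,\delta\big((AA^\dag)[X\setminus r, X\setminus r]\big).
\end{align}
Finally I would identify the trailing block: since $A[X\setminus r, e] = 0$, the inner product of row $i$ with row $j$ (for $i,j \in X\setminus r$) over $E$ equals the same inner product taken only over $E\setminus e$, because the $e$-coordinate of both rows is zero. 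Therefore $(AA^\dag)[X\setminus r, X\setminus r] = A'A'^\dag$. Combining the two displays yields $\delta(AA^\dag) = |a|^2\,\delta(A'A'^\dag)$, and since $\delta(A'A'^\dag) > 0$ (it equals $\delta(A')^2$ by parts \ref{it:mul} and \ref{it:transpose}, and $A'$ has full row rank as $\rowspan(A') = C\contract e$ and $e$ is not a loop) we may divide to obtain $|a|^2 = \delta(AA^\dag)/\delta(A'A'^\dag)$.

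\textbf{Wait — the statement has $|a|$, not $|a|^2$.} Here I should be careful about the meaning of $|v|$ for a row vector: the proof introduces the convention $|v| := \delta(vv^\dag)$, so $|a| = \delta(aa^\dag) = aa^\dag = \sum_f |a_f|^2$ is already the ``squared length'', not the Euclidean norm of the quaternionic vector. With that convention the computation above gives exactly $|a| = \delta(AA^\dag)/\delta(A'A'^\dag)$ as stated, with no discrepancy. I would make this convention explicit at the start of the subproof to avoid confusion with the quaternion norm $|p|$ of a scalar.

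\textbf{Main obstacle.} There is essentially no hard step — the whole argument is bookkeeping with Lemma \ref{lem:delta}. The only place requiring a little care is verifying that the reductions ``$A_{re}=1$, $A[X\setminus r,e]=0$, $AA^\dag$ diagonal'' can be achieved simultaneously by row operations without spoiling each other: one first does column-$e$ clearing (row operations killing $A[X\setminus r,e]$ and scaling row $r$ so $A_{re}=1$), and then Gram–Schmidt-style row operations to diagonalize $AA^\dag$ — but those later operations must be of the form ``add a $\quat$-multiple of row $i$ to row $j$ for $i<j$ within $X\setminus r$'' plus handling row $r$ last, so that the column-$e$ pattern is preserved. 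This is exactly the order already set up in the ambient proof of Theorem \ref{thm:detailedbasiscount}, so in the subproof itself I would simply invoke that the matrix is already in this normal form and cite Lemma \ref{lem:delta}\eqref{it:single}, \eqref{it:coldevelop} for the two-line computation.
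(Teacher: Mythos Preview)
Your proposal is correct and takes essentially the same approach as the paper: the paper's subproof simply displays $AA^\dag$ as a block-diagonal matrix with blocks $A'A'^\dag$ and $|a|$ (using $A[X\setminus r,e]=0$ for the off-diagonal vanishing and the identification of the top-left block) and invokes Lemma~\ref{lem:delta}\eqref{it:coldevelop}. Your momentary worry about $|a|$ versus $|a|^2$ is resolved exactly as you say, by the convention $|v|:=\delta(vv^\dag)$ introduced just before the claim; and your ``main obstacle'' paragraph, while sensible, lies outside the claim itself since the normal form is already assumed.
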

	\begin{subproof}
		By our assumptions we have that
		\begin{align}
			AA^\dag =  \kbordermatrix{ & X' & r\\
	                      & & 0\\
	                   X' &    A'A'^\dag & \vdots\\
	                      & & 0\\
                     r  & 0 \cdots 0 & |a|
	                   }.
		\end{align}
		The claim follows directly from Lemma \ref{lem:delta}.
	\end{subproof}
	Note that $Q_{ee} = |a|^{-1}$.
	\begin{claim}\label{cl:multidetexpand}
		$\delta(Q[F,F]) = |Q_{ee}| \delta(Q'[F', F'])$.
	\end{claim}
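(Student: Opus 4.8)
The plan is to prove Claim \ref{cl:multidetexpand} by combining the pivot formula for $\delta$ (Corollary \ref{cor:pivot}) with the pivot-versus-contraction identity for projection matrices (Lemma \ref{lem:projectionmatrixminor}), and then to assemble the full statement of Theorem \ref{thm:detailedbasiscount} by induction on $|F|$ using the base-counting Corollary \ref{cor:basiscount} together with the deletion--contraction relation for the number of bases.

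\medskip

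\textbf{Proof of Claim \ref{cl:multidetexpand}.} Recall that $e \in F$, that $Q = P_C$, and that by our normalisation $Q_{ee} = |a|^{-1} \neq 0$, so in particular $e$ is not a loop of $M(C)$ and the pivot $Q^{ee}$ is defined. Apply Corollary \ref{cor:pivot} to the matrix $Q[F,F]$, pivoting on the entry in position $(e,e)$: this gives
\begin{align}
	\delta(Q[F,F]) = |Q_{ee}| \, \delta\big( (Q[F,F])^{ee}[F\setminus e, F\setminus e] \big).
\end{align}
Since pivoting is a local operation — the entry $(Q^{ee})_{xy} = Q_{xy} - Q_{xe} Q_{ee}^{-1} Q_{ey}$ for $x,y \ne e$ depends only on the $\{e,x,y\}$ entries of $Q$ — we have $(Q[F,F])^{ee}[F', F'] = (Q^{ee})[F', F']$ where $F' := F \setminus e$. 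By Lemma \ref{lem:projectionmatrixminor}, $Q^{ee}[E\setminus e, E\setminus e] = P_{C\contract e} = Q'$, and restricting to $F' \subseteq E\setminus e$ gives $(Q^{ee})[F',F'] = Q'[F',F']$. Substituting, $\delta(Q[F,F]) = |Q_{ee}|\,\delta(Q'[F',F'])$, as claimed. (If instead some element of $F$ is a loop we already handled that case, so we may assume $e$ is not a loop; and in fact the chosen $e$ above is arbitrary in $F$, so this holds for every non-loop $e \in F$.)

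\medskip

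\textbf{Completing the induction.} With $|a|^{-1} = Q_{ee}$ and Claim \ref{cl:vectorlength} giving $|a| = \delta(AA^\dag)/\delta(A'A'^\dag)$, Claim \ref{cl:multidetexpand} yields
\begin{align}
	\delta(P_C[F,F]) = \frac{\delta(A'A'^\dag)}{\delta(AA^\dag)} \, \delta(P_{C\contract e}[F',F']).
\end{align}
By the inductive hypothesis applied to the $\QU$-chain group $C\contract e$ on $E\setminus e$ and the set $F' = F\setminus e$, the factor $\delta(P_{C\contract e}[F',F'])$ equals the number of bases of $M(C\contract e) = M(C)\contract e$ containing $F'$ divided by the total number of bases of $M(C)\contract e$. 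Now apply Corollary \ref{cor:basiscount} twice: $\delta(AA^\dag)$ is the number of bases of $M(C)$, and $\delta(A'A'^\dag)$ is the number of bases of $M(C\contract e)$ (note $A'$ is a weak $\QU$-matrix representing $C\contract e$, and one can pass to a strong one without changing $\delta(A'A'^\dag)$ since left-multiplication by an invertible $F$ with $\delta(F)=1$, as in the proof of Claim \ref{cl:detonequat}, is available). The key combinatorial identity is that bases of $M(C)\contract e$ containing $F'$ correspond bijectively to bases of $M(C)$ containing $F' \cup e = F$ (since $e$ is not a loop, $B' \mapsto B' \cup e$ is such a bijection), and likewise bases of $M(C)\contract e$ correspond to bases of $M(C)$ containing $e$. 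Chaining these equalities,
\begin{align}
	\delta(P_C[F,F]) = \frac{|\{B : B \text{ basis of } M(C\contract e) \text{, } F' \subseteq B\}|}{|\{B : B \text{ basis of } M(C)\}|} = \frac{|\{B : B \text{ basis of } M(C) \text{, } F \subseteq B\}|}{|\{B : B \text{ basis of } M(C)\}|},
\end{align}
which is the desired formula.

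\medskip

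The main obstacle I anticipate is the bookkeeping around the normalisation of $A$: one must be careful that the row operations used to put $AA^\dag$ in diagonal form and $A[X,e]$ into the shape $(0,\dots,0,1)^T$ do not affect $\rowspan(A)$ (hence leave $P_C$ unchanged by Lemma \ref{lem:projectionbasischange}) and that passing from the weak matrix $A'$ to a strong representative of $C\contract e$ leaves $\delta(A'A'^\dag)$ invariant. These are exactly the facts supplied by Lemmas \ref{lem:projectionbasischange}, \ref{lem:delta}, and the pivot/strong-matrix machinery, so the argument is really a matter of threading them together correctly rather than proving anything genuinely new; the genuinely new input is the clean pivot identity of Lemma \ref{lem:projectionmatrixminor}.
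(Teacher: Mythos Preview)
Your proof of the claim is correct and follows exactly the paper's approach: apply Corollary \ref{cor:pivot} to $Q[F,F]$ at the entry $(e,e)$, then invoke Lemma \ref{lem:projectionmatrixminor} to identify $Q^{ee}[F',F']$ with $Q'[F',F']$. Your explicit remark that pivoting is local (so $(Q[F,F])^{ee}[F',F'] = Q^{ee}[F',F']$) makes precise a step the paper leaves implicit, and your completion of the induction for Theorem \ref{thm:detailedbasiscount} likewise mirrors the paper's argument line for line.
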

	\begin{subproof}
		By Corollary \ref{cor:pivot}, we have $\delta(Q[F,F]) = |Q_{ee}| \delta(Q^{ee}[F', F'])$. By Lemma \ref{lem:projectionmatrixminor}, $Q^{ee}[E\setminus e,E\setminus e] = Q'$, and the claim follows.
	\end{subproof}
	By induction, we have
	\begin{align}
		\delta(Q'[F',F']) = \frac{|\{B' \subseteq E : B' \textrm{ basis of } M(C') \textrm{ and } F' \subseteq B'\}|}{|\{B' \subseteq E : B' \textrm{ basis of } M(C')\}|}.\label{eq:Q5}
	\end{align}
	Note that the denominator equals $\delta(A'A'^\dag)$, by Corollary \ref{cor:basiscount}. Now
	\begin{align}
		\delta(Q[F,F]) & = |Q_{ee}| \delta(Q'[F',F'])\label{eq:Qexpand}\\
		                & = \frac{\delta(A'A'^\dag)}{\delta(AA^\dag)} \delta(Q'[F',F'])\label{eq:Qdelta}\\
									 & =  \frac{|\{B' \subseteq E : B' \textrm{ basis of } M(C') \textrm{ and } F' \subseteq B'\}|}{\delta(AA^\dag)}\label{eq:Q3}\\
									& = \frac{|\{B \subseteq E : B \textrm{ basis of } M(C) \textrm{ and } F \subseteq B\}|}{|\{B \subseteq E : B \textrm{ basis of } M(C)\}|},\label{eq:Q4}
	\end{align}
	where \eqref{eq:Qexpand} follows from Claim \ref{cl:multidetexpand}, and \eqref{eq:Qdelta} follows from Claim \ref{cl:vectorlength}. After that, \eqref{eq:Q3} follows from \eqref{eq:Q5}, and \eqref{eq:Q4} follows since $B'$ is a basis of $M(C')$ if and only if $B'\cup e$ is a basis of $M(C)$. 
\end{proof}

\section{Open Problems}
In this paper we have shown that the class of matroids representable over skew partial fields is strictly larger than the class of matroids representable over a skew field. Since all examples we have seen can be converted to multilinear representations, we conjecture:

\begin{conjecture}\label{con:skewpfmulti}
	For every skew partial field $\pf$ there exists a partial-field homomorphism $\pf\rightarrow \pf(n, \field)$ for some integer $n$ and field $\field$.
\end{conjecture}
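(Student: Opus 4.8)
The plan is first to translate the conjecture into pure ring theory. A partial-field homomorphism $\pf=(R,G)\to\pf(n,\field)=(\matring(n,\field),\GL(n,\field))$ is a ring homomorphism $\phi\colon R\to\matring(n,\field)$ with $\phi(G)\subseteq\GL(n,\field)$; since $1\in G$ and the only invertible idempotent matrix is $I$, the condition $\phi(G)\subseteq\GL(n,\field)$ already forces $\phi(1)=I$, after which $\phi(G)$ is automatically a group of invertible matrices. Hence a partial-field homomorphism $\pf\to\pf(n,\field)$ is \emph{exactly} a unital ring homomorphism $R\to\matring(n,\field)$, and the conjecture asserts that every ring admits a unital ring homomorphism to some matrix ring over a skew field. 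By Artin--Wedderburn these targets are precisely the simple Artinian rings, so equivalently: every ring has a homomorphism onto a subring of a simple Artinian ring.

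For the positive direction I would reduce and then invoke the density theorem. Every ring has a maximal two-sided ideal, so it suffices to embed an arbitrary \emph{simple} ring $R$ into a simple Artinian ring. A maximal left ideal gives a faithful simple left module $S$; by Schur's lemma $D:=\mathrm{End}_R(S)$ is a division ring, and by the Jacobson density theorem $R$ is a dense subring of $\mathrm{End}_D(S)$. If $\dim_D S<\infty$ then density yields $R\cong\mathrm{End}_D(S)\cong\matring(n,D^{\mathrm{op}})$, so $\phi=\mathrm{id}$ works. The whole difficulty therefore sits in simple rings whose simple modules are all infinite-dimensional over their endomorphism rings; a few of these still escape---an Ore domain embeds in its classical skew field of fractions, disposing of e.g.\ the Weyl algebras---and one would hope to make such escapes systematic.

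The main obstacle, which I expect is in fact decisive \emph{against} the conjecture as literally stated, is that a matrix ring $\matring(n,\field)$ contains at most $n$ pairwise orthogonal nonzero idempotents (the subspaces $e_1V,\dots,e_mV$ of $V\cong\field^n$ are independent), while there exist simple rings with arbitrarily large such sets: the Leavitt algebra $L:=L_K(1,2)$ over a field $K$ is simple and satisfies $L\cong L\oplus L$ as left modules, so by iteration $L\cong L^{2^m}$ and $L$ contains $2^m$ pairwise orthogonal nonzero idempotents for every $m$. Thus $L$ has no nonzero ring homomorphism to any $\matring(n,\field)$, and $(L,\{\pm1\})$ refutes the literal statement---though, taking $K=\Q$, this partial field represents only the regular matroids, so the \emph{intended} consequence is unharmed. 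A sensible revised programme is then: (i) record this counterexample; (ii) reinstate the conjecture under a hypothesis on $R$ that excludes such rings (Noetherian, two-sided Ore domain, polynomial-identity); and, most relevant here, (iii) prove the weaker statement actually wanted---that every matroid representable over a skew partial field has an $n$-multilinear representation over some field---using that any single $\pf$-chain group equals $\rowspan([I\ D])$ and hence lives over the finitely generated subring generated by the entries of $D$. The genuine crux of (iii), which I do not see how to finish, is to manufacture, for an arbitrary finitely generated ring carrying a prescribed matroid, a finite-dimensional representation over a skew field detecting exactly that matroid---something the density theorem alone does not provide.
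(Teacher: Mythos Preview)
The statement is a conjecture in the paper's closing section on open problems; the paper gives no proof, so there is no argument of theirs to compare yours against.

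Your proposal is not a proof but, in effect, a refutation of the conjecture as literally stated, and it appears to be correct. The translation to ring theory is sound: since $1\in G$ and the only invertible idempotent in $\matring(n,\field)$ is $I$, a partial-field homomorphism $(R,G)\to(\matring(n,\field),\GL(n,\field))$ is exactly a unital ring homomorphism $R\to\matring(n,\field)$; conversely any unital ring homomorphism carries units to units, so the condition on $G$ is automatic. The Leavitt algebra $L=L_K(1,2)$ is simple, so any unital homomorphism out of it is injective; it contains, for every $m$, a family of $2^m$ pairwise orthogonal nonzero idempotents (coming from $L\cong L^{2^m}$ as left modules), while $\matring(n,\field)$ admits at most $n$ such, since the images $e_iV$ are nonzero independent subspaces of $\field^n$. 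Hence no unital homomorphism $L\to\matring(n,\field)$ exists for any $n$ and any skew field $\field$, and $(L,\{\pm1\})$ refutes the conjecture as worded.

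Your further remark that this skew partial field represents only regular matroids is also right: every strong $(L,\{\pm1\})$-matrix has entries in $\{-1,0,1\}\subseteq\Q\subseteq Z(L)$, and for square matrices with central entries invertibility over $L$ coincides with invertibility over $\Q$. So the counterexample does not touch the paper's \emph{intended} consequence, stated immediately after the conjecture, that every matroid representable over a skew partial field has a multilinear representation over some field. That reformulation is the genuinely matroid-theoretic question, and your proposed programme for it---pass to the finitely generated subring carrying a given representation and seek a finite-dimensional module that detects the matroid---is reasonable but, as you say, not something the density theorem alone delivers. It remains open.
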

In other words: a matroid is representable over a skew partial field if and only if it has a multilinear representation over some field. 

A useful tool to prove that a matroid is not representable over a skew field is Ingleton's Inequality \cite{Ing71}. Ingleton's proof generalizes to multilinear representations, so the following conjecture is implied by Conjecture \ref{con:skewpfmulti}:

\begin{conjecture}
  Ingleton's Inequality is satisfied by all quadruples of subsets of a matroid representable over a skew partial field.
\end{conjecture}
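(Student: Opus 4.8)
The plan is to reduce the conjecture to Conjecture~\ref{con:skewpfmulti} and then push Ingleton's classical argument through the noncommutative setting. So suppose $M$ is representable over a skew partial field $\pf$ and that Conjecture~\ref{con:skewpfmulti} holds, giving a partial-field homomorphism $\phi\colon\pf\to\pf(n,\field)$ for some integer $n$ and some field $\field$. Apply $\phi$ to a $\pf$-chain group representing $M$: by Theorem~\ref{thm:chainhom} the image is a $\pf(n,\field)$-chain group representing $M$, and by Theorem~\ref{thm:multilinskew} this is the same data as an $n$-multilinear representation $V\colon E(M)\to\grass(n,\field^{nr})$, so that $n\,\rank_M(X)=\dim\bigl(\sum_{e\in X}V(e)\bigr)$ for all $X\subseteq E(M)$. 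Writing $W_X:=\sum_{e\in X}V(e)$ we have $W_{X\cup Y}=W_X+W_Y$, so Ingleton's inequality for $M$ and four subsets $S_1,\dots,S_4$ is exactly $\tfrac1n$ times the corresponding inequality for the subspaces $W_{S_1},\dots,W_{S_4}$ of the left $\field$-vector space $\field^{nr}$.

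It then remains to prove the subspace form of Ingleton's inequality over an arbitrary skew field: for subspaces $U_1,\dots,U_4$ of a left vector space,
\begin{multline*}
  \dim(U_1+U_2)+\dim(U_1+U_3)+\dim(U_1+U_4)+\dim(U_2+U_3)+\dim(U_2+U_4)\\
  \ge\ \dim U_1+\dim U_2+\dim(U_3+U_4)+\dim(U_1+U_2+U_3)+\dim(U_1+U_2+U_4).
\end{multline*}
I would rewrite every join via $\dim(A+B)=\dim A+\dim B-\dim(A\cap B)$; after cancellation this becomes $\dim((U_1+U_2)\cap U_3)+\dim((U_1+U_2)\cap U_4)+\dim(U_3\cap U_4)+\dim(U_1\cap U_2)\ge\sum_{i\in\{1,2\},\,j\in\{3,4\}}\dim(U_i\cap U_j)$, and this follows by applying, twice, the containment $(A\cap B)+(A\cap C)\subseteq A\cap(B+C)$ together with the modular identity $\dim(A+B)+\dim(A\cap B)=\dim A+\dim B$ — the second application bounding $\dim((U_1\cap U_2)\cap U_3)+\dim((U_1\cap U_2)\cap U_4)$ by $\dim(U_1\cap U_2)+\dim(U_3\cap U_4)$. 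Both ingredients hold verbatim for left vector spaces over a skew field, so this is Ingleton's 1971 argument unchanged; the only thing to verify is that no step quietly uses commutativity, and none does. (The same remark shows Ingleton's inequality already holds for every $\pf(n,\field)$-representable matroid, i.e. for every multilinear representation.)

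The hard part will be Conjecture~\ref{con:skewpfmulti}. A direct attack on Ingleton at the level of chain groups looks hopeless: over a noncommutative ring the ``column modules'' $\sum_{e\in X}R\,A_e$ attached to a generator matrix are merely left $R$-modules, carrying neither a well-defined rank nor a modular law — indeed the dimension formula already fails for general chain groups (see \cite{AT01}) — so Ingleton's argument has nothing to grip, and one genuinely seems forced to land inside a matrix ring over a skew field. One favourable sub-case of Conjecture~\ref{con:skewpfmulti} is immediate: a representing chain group uses only entries in $G\cup\{0\}$, so we may assume $R$ is generated by $G$, and if in addition $R$ is finitely generated as a module over a central subfield $\field$ (in particular whenever $R$ is finite, as for $\pf_3=(\GF(3)[i,j,k],R_3^{*})$), then left multiplication embeds $R$ into $\matring(m,\field)$, which is the ring of $\pf(m,\field)$, and the elements of $G$ — closed under inverses — go to invertible matrices, yielding the required homomorphism. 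The genuinely open case is when the ring generated by $G$ is large (a free group algebra, say), where a structure theory of the rings that can arise from skew partial fields appears to be needed; absent that, the conjecture stands or falls with Conjecture~\ref{con:skewpfmulti}.
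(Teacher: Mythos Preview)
The statement you are attempting to prove is listed in the paper as an open \emph{conjecture}; the paper offers no proof. Its entire discussion consists of two remarks: that the conjecture is implied by Conjecture~\ref{con:skewpfmulti} (since ``Ingleton's proof generalizes to multilinear representations''), and that Ingleton's argument does not carry over directly to skew partial fields because ``we do not have a vector space at our disposal''.

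Your proposal is precisely an elaboration of the first remark. You correctly argue that, granted Conjecture~\ref{con:skewpfmulti}, Theorem~\ref{thm:chainhom} and Theorem~\ref{thm:multilinskew} produce an $n$-multilinear representation, after which Ingleton's subspace argument applies; your verification that the subspace form holds over a skew field is clean and is exactly what the paper means by ``Ingleton's proof generalizes to multilinear representations''. But this is not a proof of the conjecture: as you yourself say, everything hinges on Conjecture~\ref{con:skewpfmulti}, which remains open. Your special case --- $R$ finitely generated as a module over a central subfield --- is a correct and pleasant observation (though ``in particular whenever $R$ is finite'' overreaches: a finite ring of composite characteristic need not contain a central subfield with the same identity), but it does not address the general case. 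In short, there is nothing to compare: the paper gives no proof, and your proposal gives the same conditional reduction the paper already records, fleshed out but not completed.
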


Since we do not have a vector space at our disposal, Ingleton's proof does not generalize to skew partial fields. 

Another question that might give insight in how much our matroids can differ from representable ones is the following:

\begin{question}
  Are all matroids that are representable over a skew partial field algebraic?
\end{question}

A proof of the following conjecture should be a straightforward adaptation of existing work.

\begin{conjecture}\label{con:genparcon}
	Let $\parf$ be a skew partial field, and let $M_1$ and $M_2$ be $\parf$-representable matroids having a common flat $N$, and representations that agree on $N$. If $N$ is a modular flat in $M_1$, then the generalized parallel connection of $M_1$ and $M_2$ along $N$ is $\parf$-representable.
\end{conjecture}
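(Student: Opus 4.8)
The plan is to build an explicit generator matrix for $P := P_N(M_1,M_2)$ over $\parf = (R,G)$ and check it represents $P$. Write $E_i = E(M_i)$ and $T = E_1 \cap E_2 = E(N)$, so $E(P) = E_1 \cup E_2$. First I would normalise the given representations. Pick a basis $B_N$ of $N$; since $N$ is a flat of $M_1$, extend it to a basis $B_1$ of $M_1$, and since $N$ is also a flat of $M_2$, extend it to a basis $B_2$ of $M_2$. Using Corollary~\ref{cor:weakskewstrong}, Lemma~\ref{lem:skewpivot} and the column scaling of Proposition~\ref{pro:colscale}, arrange weak $\parf$-matrices $A_i$ for $M_i$ with rows indexed by $B_i$ and an identity submatrix on the columns $B_i$; since $T$ is closed and $B_N$ spans $N$, every column of $A_i$ indexed by an element of $T$ is supported on the rows $B_N$. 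In block form, with row blocks $B_N, B_i\setminus B_N$ and column blocks $T, E_i\setminus T$, this reads $A_i = \left[\begin{smallmatrix} A_N^{(i)} & P_i \\ 0 & Q_i\end{smallmatrix}\right]$, where $A_N^{(i)}$ is a generator matrix of the chain group of $N$. As the two representations agree on $N$, a further common scaling makes $A_N^{(1)} = A_N^{(2)} =: A_N$.

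Now propose the matrix (rows $X := B_N \cup (B_1\setminus B_N)\cup (B_2\setminus B_N)$, column blocks $T, E_1\setminus T, E_2\setminus T$)
\[
  A := \begin{bmatrix} A_N & P_1 & P_2 \\ 0 & Q_1 & 0 \\ 0 & 0 & Q_2\end{bmatrix},
\]
and set $C := \rowspan(A)$. That $M(C) = P$ I would obtain from Theorem~\ref{thm:skewpfinvertible}: a set $B$ with $|B| = |X|$ is a basis of $M(C)$ iff $A[X,B]$ is invertible, and a Schur-complement computation using the block structure shows that this is equivalent to $B$ being a basis of the generalized parallel connection, matched against Brylawski's description of the flats of $P$ as the sets meeting $E_i$ in a flat of $M_i$. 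It is here that modularity of $N$ in $M_1$ first enters: it guarantees both that $P$ is a matroid and that the combinatorics of the block matrix reproduces it.

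The substantial step is that $C$ is a $\parf$-chain group, which I would prove with Proposition~\ref{prop:Pchaingrouptest}: every matrix obtained from the non-identity part of $A$ by a sequence of pivots must have all entries in $G\cup\{0\}$. A pivot confined to the $B_i\setminus B_N$ rows, or to the $N$-block, merely mirrors a pivot of $A_i$ and is harmless. The delicate pivots are those on a $B_N$-row into a column $v\in E_1\setminus T$: they produce cross-terms of the form $A_2[b,w] - A_1[b,v]\,A_1[u,v]^{-1}\,A_2[u,w]$ (with $b,u\in B_N$, $v\in E_1\setminus T$, $w\in E_2\setminus T$) that mix the two sides and are not obviously primitive. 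Modularity of $N$ in $M_1$ is exactly the tool that saves these: it forces the flat $\closure_{M_1}(\{v\}\cup(B_1\setminus B_N))$ to meet $T$ in a rank-one flat, hence produces an element $t\in T$ whose $N$-coordinates are proportional to the $B_N$-part of column $v$; since the representations agree on $N$ we get $A_1[b,v]A_1[u,v]^{-1} = A_2[b,t]A_2[u,t]^{-1}$, so the cross-term collapses to the entry $(A_2^{ut})_{bw}$ of a genuine pivot of the $\parf$-matrix $A_2$ and therefore lies in $G\cup\{0\}$. The main obstacle is making this robust against arbitrary iterated pivots: a single delicate pivot already destroys the clean block form, and the hypothesis is asymmetric (modularity is assumed only in $M_1$), so one needs an invariant — phrased for an arbitrary basis $B'$ of $P$ and the generator matrix normalised to $B'$, with the $M_1$-side always carrying the $N$-structure — that keeps the interface at $N$ under control throughout, modularity being invoked each time to re-express a potentially mixed entry as an entry of a pivot of $A_1$ or of $A_2$; this is the noncommutative analogue of the determinant-expansion argument used over fields and commutative partial fields. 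An alternative route would use Theorem~\ref{thm:tutcondition}: choose primitive chains on the cocircuits of $P$, glued across $N$ by uniqueness of primitive elementary chains up to $G$-scaling, and verify that every modular triple of cocircuits of $P$ either restricts to a modular triple of $M_1$ or of $M_2$, or splits into an $M_1$-relation and an $M_2$-relation that agree on $N$ — again it is modularity of $N$ in $M_1$ that rules out genuinely mixed triples with no such decomposition.
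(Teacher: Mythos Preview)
The paper does not prove this statement: it is Conjecture~\ref{con:genparcon}, listed in the Open Problems section, and the paper only remarks that ``a proof \ldots\ should be a straightforward adaptation of existing work,'' citing the commutative case in \cite{MWZ09} and the field case going back to Brylawski. There is therefore no proof in the paper to compare your proposal against.

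That said, your outline follows the expected shape of such an adaptation: build the block generator matrix, identify the bases via Theorem~\ref{thm:skewpfinvertible}, and replace the determinant arguments of the commutative proof by the pivot test of Proposition~\ref{prop:Pchaingrouptest}. Your single-pivot analysis is correct: modularity of $N$ in $M_1$ gives, for each $v\in E_1\setminus T$ not in $\closure_{M_1}(B_1\setminus B_N)$, an element $t\in T$ with $A_1[b,v]=A_1[b,t]\,g$ for all $b\in B_N$ (from the circuit in $\{t,v\}\cup(B_1\setminus B_N)$ and the dual chain group), and this does reduce the mixed entry to a pivot of $A_2$. The part you flag as unresolved --- maintaining an invariant through arbitrary iterated pivots, especially given the asymmetry of the hypothesis --- is genuinely the missing piece, and is exactly where the commutative proofs lean on determinant identities that are unavailable here. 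Your alternative via Theorem~\ref{thm:tutcondition} is probably the cleaner route, since modular triples of cocircuits of $P$ live in a corank-$2$ quotient and the classification of how they sit relative to $N$ can be done combinatorially before any algebra enters; this avoids having to track the block structure through a pivot sequence. Either way, what you have is a credible plan rather than a proof, which is consistent with the paper's own assessment of the problem.
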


Mayhew, Whittle, and Van Zwam proved this for commutative partial fields \cite{MWZ09}, thus generalizing a result by Lee \cite{Lee90}. For fields this result dates back to Brylawski \cite{Bry75}.

The next question was raised by Semple and Whittle \cite{SW96} for abelian groups:

\begin{problem}\label{prob:dowling}
  What are necessary and sufficient conditions on a group $G$ so that $Q_r(G)$ is representable over some skew partial field?
\end{problem}

Semple and Whittle found, using arguments much like ours in Section \ref{sec:chaingr}, that if $\parf = (R,G')$ is such a partial field, then $G$ is a subgroup of $G'$, and $1-g \in G'$ for all $g \in G\setminus \{1\}$. These observations extend to skew partial fields and general groups. From this they concluded that it is necessary that the group has at most one element of order two. This too is true for general groups: from $t^2 = 1$ and the fact that $1-t$ is invertible we deduce that $t + 1 = 0$, as in Claim \ref{cl:minusone} above. Semple and Whittle claimed that this condition would be sufficient. Unfortunately this is false, which can be deduced from the following two facts from commutative algebra, the first of which was used in the proof of Theorem \ref{prop:pfmatroid}.
\begin{enumerate}
	\item Every commutative ring $R$ has a maximal ideal $I$. For such an ideal, $R/I$ is a field.
	\item Every finite subgroup of the multiplicative group of a field is cyclic.
\end{enumerate}
The problem in Semple and Whittle's purported proof seems to be that they could not guarantee that the map from their axiomatically defined group with partial addition to its group ring was injective. Since both Dowling geometries and representable matroids are fundamental objects in matroid theory research, we hope that someone will come up with a satisfactory answer to Problem \ref{prob:dowling}.

A \emph{universal partial field} of a matroid $M$ is a (commutative) partial field $\parf_M$ for which there exists a $\parf_M$-matrix $A_M$ such that \emph{every} representation $A$ over a partial field $\parf$ satisfies $A = \phi(A_M)$ for some partial-field homomorphism $\phi$. Hence universal partial fields contain all information about representations of a matroid. Universal partial fields were introduced in \cite{PZ08conf}, building on work by, among others, White \cite{Wte75a}, and Baines and V\'amos \cite{BV03}. A different algebraic object associated with a matroid is the \emph{Tutte group}, defined by Dress an Wenzel \cite{DW89}. The \emph{Tutte group} abstracts the multiplicative structure of not only linear representations, but also orientations of matroids \cite{GRS95}, algebraic representations, and the coefficients of polynomials with the half-plane property related to a matroid \cite{BG10}.

While all constructions rely heavily on commutativity, there is no reason to doubt the feasibility of the following project:
\begin{problem}
	Develop a theory of universal skew partial fields.
\end{problem}
A good starting point is Tutte's representability criterion, Theorem \ref{thm:tutcondition}.

We conclude this section with some questions regarding quaternionic unimodular matroids. A first, and rather crucial question is the following:

\begin{question}
	Are there QU matroids that are not representable over any commutative field?
\end{question}

The obvious candidate, the non-Pappus matroid, is not QU. This follows by considering a $U_{2,6}$-minor, and checking in which way it arises from the representation in \eqref{eq:nonpaprep}. A much more ambitious project is the following:

\begin{question}
	What are the excluded minors for the class of QU matroids?
\end{question}

In fact, we do not know if this list will be finite.

To get more insight in the representations of QU matroids, we consider the set of \emph{fundamental elements} of a skew partial field:
\begin{align}
	\fun(\parf) := \{p \in \parf: 1-p \in \parf\}.
\end{align}
For commutative partial fields we can represent all $\parf$-representable matroids over the sub-partial field with group generated by $-1$ and $\fun(\parf)$. This result generalizes to skew partial fields. For the $\sru$ partial field, $\fun(\psru) = \{1,\zeta, \zeta^{-1}\}$. However, for the skew partial field $\QU$ this set is infinite: it consists of $1$ and all quaternions $a + bi + cj + dk$ with $a = \frac{1}{2}$ and $a^2 + b^2 + c^2 + d^2 = 1$. We define the \emph{cross ratios} of a representation of $M$ as the collection of fundamental elements used in representations of $U_{2,4}$-minors of $M$. 

\begin{question}
	Is there a finite set of fundamental elements $F$ such that all QU matroids have a representation whose cross ratios are contained in $F$?
\end{question}
Using (a special case of) Conjecture \ref{con:genparcon} this question is easily reduced to 3-connected matroids. A more concrete conjecture is the following:
\begin{conjecture}
	Let $p,q,r\in \quat$ be such that $|i-j| = 1$ for all distinct $i,j \in \{0,1,p,q,r\}$. If $M$ is a QU matroid, then $M$ is representable over the skew partial field $(\quat, \langle -1, p,q,r\rangle)$. 
\end{conjecture}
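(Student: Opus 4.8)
The plan is to induct on $|E(M)|$, first peeling off low-order separations and then controlling which fundamental elements a representation of a $3$-connected QU matroid may use. If $M$ is disconnected or has a $2$-separation, write it as a direct sum or $2$-sum of proper minors $M_1,M_2$; each is QU, hence by induction representable over $(\quat,\langle -1,p,q,r\rangle)$, and these representations glue over the same skew partial field (a special case of Conjecture~\ref{con:genparcon}; for the $2$-sum the gluing is elementary, since the basepoint contributes only the scalars $0,\pm1$). So we may assume $M$ is $3$-connected.

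\emph{Confinement.} Fix any $\QU$-representation of $M$ and read off its finitely many $U_{2,4}$-cross ratios; these are fundamental elements of $\QU$. Let $\parf_M=(\quat,G_M)$ be the sub-skew-partial-field whose group $G_M$ is generated by $-1$ together with those cross ratios. Since the chosen representation has all its cross ratios in $\parf_M$ and $M$ is $3$-connected, an adaptation to skew partial fields of the confinement theorem of~\cite{PZ08conf} --- whose proof rests only on Tutte's criterion (Theorem~\ref{thm:tutcondition}), pivoting (Lemma~\ref{lem:skewpivot}), and $3$-connectivity, all available here --- yields a $\parf_M$-representation of $M$. It then suffices to produce a partial-field homomorphism $\parf_M\rightarrow(\quat,\langle -1,p,q,r\rangle)$ and apply Theorem~\ref{thm:chainhom}.

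\emph{The homomorphism.} The arithmetic of $\QU$ makes this believable. Every $f\in\fun(\QU)\setminus\{1\}$ has real part $\tfrac12$ and norm $1$, so its reduced characteristic polynomial gives $f^2=f-1$; hence $\bar f=f^{-1}=1-f=-f^2$, and the six associates of $f$ collapse to $\{f,\bar f\}$. In particular $G_M$ is automatically closed under $f\mapsto 1-f$. Writing $f=\tfrac12+\vec v_f$ with $\vec v_f$ purely imaginary of norm $\tfrac{\sqrt3}2$, the inner automorphisms $x\mapsto uxu^{-1}$ ($u\in S\quat$) are partial-field automorphisms of $\QU$ that act on the $\vec v_f$ as the rotation group $SO(3)$ of $\R^3$ (and $x\mapsto\bar x$, with $\QU^\circ\cong\QU$, supplies the remaining reflection if needed). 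Thus the desired homomorphism exists provided the configuration $\{\vec v_f : f\text{ a cross ratio of }M\}$ is, after a rotation and a relabeling, a subset of $\{\vec v_p,\vec v_q,\vec v_r\}$ --- three vectors of norm $\tfrac{\sqrt3}2$ with pairwise inner product $\tfrac14$. Two facts should force this. First, $M$ has no $U_{2,7}$-minor: seven collinear points would require five unit quaternions at pairwise distance $1$, whose Gram matrix has rank $5$ and cannot embed in $\quat\cong\R^4$; this caps the configuration at the size of $\{\vec v_p,\vec v_q,\vec v_r\}$. Second, the cross ratios of overlapping $U_{2,4}$-minors of a $3$-connected matroid are tied together by the cross-ratio identities, and since every element thereby produced must again be fundamental --- hence of the same rigid shape --- the pairwise inner products are pinned to the simplex value $\tfrac14$.

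The crux is this last rigidity claim. For a single $U_{2,4}$-minor it is immediate, but ruling out, for a general $3$-connected QU matroid, an exotic or merely non-generic family of mutually-distance-$1$ unit quaternions demands a genuinely new argument connecting the cross-ratio relations among overlapping minors to the geometry of the sphere $S\quat$. A secondary and more routine task is to verify that the confinement machinery of~\cite{PZ08conf} transfers unchanged to the noncommutative setting, following the pattern of the adaptations already carried out in Section~\ref{sec:chaingr}.
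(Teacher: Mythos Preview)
The statement you are trying to prove is listed in the paper as an open \emph{conjecture}; the paper offers no proof, so there is nothing to compare your argument against. What you have written is not a proof but a strategy with several load-bearing gaps, the most important of which you identify yourself.

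The decisive gap is the ``rigidity claim'' in your final paragraph. Bounding the size of a single $U_{2,6}$-minor via the Gram-matrix argument is fine (and it is correct that $U_{2,7}$ is not QU), but this says nothing about the \emph{total} collection of cross ratios appearing across all $U_{2,4}$-minors of a $3$-connected $M$. Different $U_{2,4}$-minors may well contribute different fundamental elements, and you give no mechanism that forces their imaginary parts to sit in a single equiangular triple on the sphere. The cross-ratio identities relate overlapping minors multiplicatively, but you would need to show that these multiplicative relations, combined with the constraint that every resulting element again lies in $\fun(\QU)$, pin down the configuration up to a rotation. That is exactly the heart of the conjecture, and you have not supplied it; you merely assert that it ``should'' hold.

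There are secondary gaps as well. Your reduction to the $3$-connected case invokes a ``special case of Conjecture~\ref{con:genparcon}'' --- itself an open problem in the paper --- and while the $2$-sum case may be tractable, you have not actually carried it out in the noncommutative setting. Likewise, the confinement theorem of \cite{PZ08conf} is stated for commutative partial fields; porting it to skew partial fields is plausible given the tools in Section~\ref{sec:chaingr}, but it is nontrivial work that you defer rather than do. In short, your outline isolates the right subproblems, but the central one remains as open after your proposal as before.
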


Yet another conjecture is the following:

\begin{conjecture}\label{con:2uni}
	The class of 2-uniform matroids is contained in the class of QU matroids.
\end{conjecture}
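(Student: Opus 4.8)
The plan is to represent each rank-$2$ uniform matroid $U_{2,n}$ over $\QU$ by an explicit matrix. By Lemma~\ref{lem:generator}, a $\QU$-chain group $C$ with $M(C)=U_{2,n}$ is the row span of a $2\times n$ matrix, and a basis exchange puts this matrix in the form $[I_2\ D]$ with $D$ a $2\times(n-2)$ matrix over $\quat$. Because $U_{2,n}$ is uniform its columns are pairwise independent, so after row scaling and the column scalings of Proposition~\ref{pro:colscale} we may take the columns of $[I_2\ D]$ to be $(1,0)$, $(0,1)$ and $(1,s_3),\dots,(1,s_n)$ with $s_3=1$, the underlying matroid being $U_{2,n}$ precisely when $0,s_3,\dots,s_n$ are distinct. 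The task thus becomes: choose $s_4,\dots,s_n\in\quat$ so that $\rowspan([I_2\ D])$ is a $\QU$-chain group.

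To verify that, I would apply Theorem~\ref{thm:tutcondition}. The cocircuits of $U_{2,n}$ are its $(n-1)$-subsets; the cocircuit chain $a^e$ supported on $E\setminus e$ is, for the three ``coordinate'' columns, a row of $[I_2\ D]$ or the difference of its two rows, and for the column $(1,s_j)$ it equals $(-s_j,\,1,\,1-s_j,\,s_4-s_j,\,\dots,\,0,\,\dots,\,s_n-s_j)$. Requiring every $a^e$ to be $S\quat$-primitive is then equivalent to the single condition that all pairwise differences among $0,1,s_4,\dots,s_n$ have quaternion norm~$1$, and one checks directly that this condition suffices to make each $a^e$ primitive. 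The remaining hypothesis of Theorem~\ref{thm:tutcondition}, the modular-triple relation~\eqref{eq:addtriple}, is automatic in rank~$2$: the complement of the union of three distinct $(n-1)$-sets is empty, so every triple of cocircuits is modular, and a short computation shows the unique (up to scaling) linear dependence among any three cocircuit chains has coefficients equal to ratios of the differences $s_i-s_j$, hence of norm~$1$. So, modulo routine verifications, the conjecture reduces to one metric statement about $\quat$.

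The metric statement is the main obstacle: for every $n$ one must produce a set $\{0,1,s_4,\dots,s_n\}$ of $n-1$ quaternions whose pairwise distances all equal~$1$, i.e.\ the vertices of a regular simplex in $\quat\cong\R^4$; taking one vertex at the origin, the remaining $n-2$ are then pairwise-distance-$1$ points of $S\quat$. Since $\R^4$ contains at most $5$ pairwise-equidistant points and $S\quat$ at most $4$, this construction yields a $\QU$-representation of $U_{2,n}$ exactly for $n\le 6$, the case $n=6$ being realised by the $4$-simplex matrix at the start of Section~\ref{sec:quat}. A direct representation argument therefore settles the conjecture only in the bounded range $n\le 6$; proving it for all $n$ would require a genuinely different idea, and if it is to hold for all $n$ then ``$2$-uniform'' must be read so as to exclude $U_{2,n}$ for large $n$, after which one would instead try to show the relevant matroids occur only as minors of QU matroids and invoke Theorem~\ref{thm:minors}. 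Pinning down the intended scope is thus the first step, and in the bounded case nothing but the verifications above is left.
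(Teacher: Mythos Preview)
You have misread the statement in two ways.

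First, this is Conjecture~\ref{con:2uni}, listed among the paper's open problems; the paper offers no proof, so there is nothing to compare your attempt against. The authors explicitly remark that a resolution would likely need a generalization of the Lift Theorem applied to $\QU\times\uniform_2$, possibly via Tutte's Homotopy Theorem.

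Second, and more seriously, ``2-uniform'' does not mean ``rank-2 uniform''. As the paragraph following the conjecture explains, the $k$-uniform matroids (introduced by Semple as $k$-regular matroids) are the matroids representable over the $k$-uniform partial field: the $0$-uniform matroids are the regular matroids, the $1$-uniform matroids sit inside the $\sru$ class, and the $2$-uniform partial field is the universal partial field of $U_{2,5}$. So the class in question is a specific class of matroids of arbitrary rank, not the family $\{U_{2,n}:n\geq 2\}$. Your entire construction --- the $2\times n$ generator matrix, the reduction to equidistant points on $S\quat$, the simplex bound in $\R^4$ --- is aimed at the wrong target. Indeed, your own argument shows that $U_{2,7}$ is \emph{not} QU (there is no regular $5$-simplex in $\R^4$), which would contradict the conjecture under your reading; but $U_{2,7}$ is not $2$-uniform in Semple's sense either, so there is no contradiction with the actual conjecture.

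If you want to attack the genuine conjecture, the first step is to look up the definition of the $2$-uniform partial field $\uniform_2$ (see Semple~\cite{Sem97} or \cite{PZ08conf}) and then try to exhibit a partial-field homomorphism $\uniform_2\to\QU$, or at least to show that every $\uniform_2$-matrix can be re-scaled to a $\QU$-matrix. That is a very different problem from the one you worked on.
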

The 2-uniform matroids were introduced as 2-regular matroids by Semple \cite{Sem97}. Pendavingh and Van Zwam \cite{PZ08conf} showed that the 2-uniform partial field is the (commutative) universal partial field of $U_{2,5}$. Note that the 0-uniform matroids are regular, and the 1-uniform matroids are contained in the class of $\sru$ matroids. A sufficiently constructive positive answer would settle the following conjecture by Hall, Mayhew, and Slilaty (private communication).

\begin{conjecture}
	There is a polynomial-time algorithm to count the number of bases of a 2-uniform matroid.
\end{conjecture}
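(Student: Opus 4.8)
The plan is to derive a polynomial-time counting algorithm from the QU basis-counting formula, Corollary \ref{cor:basiscount}, by routing every 2-uniform matroid through a quaternionic unimodular representation. The crucial observation is that the right-hand side of Corollary \ref{cor:basiscount} is genuinely computable: for a strong $\QU$-matrix $A$ one has $\delta(AA^\dag) = \sqrt{|\det(z_2(\phi(AA^\dag)))|}$, where $\phi$ is applied entrywise and $z_2$ unwraps $A A^\dag$ into a complex matrix of size $2r\times 2r$. Computing an ordinary determinant over $\C$ (or over $\Q(i)$ when the entries are rational) is polynomial time, so once a $\QU$-representation of a 2-uniform matroid is in hand, its number of bases can be read off in polynomial time. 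The entire problem therefore reduces to producing such a representation efficiently, which is exactly what a constructive form of Conjecture \ref{con:2uni} supplies.

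First I would establish Conjecture \ref{con:2uni} in the strongest available form: a partial-field homomorphism $\psi\colon \uniform_2 \to \QU$ from the 2-uniform partial field $\uniform_2$. Because $\uniform_2$ is the universal partial field of $U_{2,5}$, it is generated by the cross ratios occurring in its $U_{2,4}$- and $U_{2,5}$-minors, and a homomorphism out of it is pinned down by the images of these finitely many generators together with the relations they satisfy. The fundamental elements of $\QU$ form a large set, namely $1$ and every unit quaternion with real part $\tfrac12$, an entire 2-sphere's worth, so there is ample room to place the generators; the task is to choose images inside $S\quat$ that satisfy every defining relation simultaneously. Given $\psi$, Theorem \ref{thm:chainhom} immediately turns any $\uniform_2$-representation of a 2-uniform matroid $M$ into a $\QU$-representation of the same matroid.

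The algorithm then runs as follows. The input is a 2-uniform matroid presented by a $\uniform_2$-matrix $A_0$. Apply $\psi$ entrywise to obtain a weak $\QU$-matrix $A := \psi(A_0)$ with $M[A] = M[A_0]$; convert it to a strong $\QU$-matrix by Corollary \ref{cor:weakskewstrong} (a polynomial-time linear-algebra step over $\quat$), verifying membership through pivots and Proposition \ref{prop:Pchaingrouptest} if desired. Form $AA^\dag$, apply $\phi$ entrywise and then $z_2$, compute the complex determinant, and output $\sqrt{|\det(z_2(\phi(AA^\dag)))|}$. Correctness is Corollary \ref{cor:basiscount}, and every arithmetic operation is on numbers of polynomially bounded bit-length, so the whole procedure runs in polynomial time.

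The main obstacle is the constructive existence of $\psi$, that is, the relation-checking at the heart of Conjecture \ref{con:2uni}. In the noncommutative setting one cannot match relations merely by comparing determinants, so I would need an explicit presentation of $\uniform_2$ (its generators and its full system of multiplicative cross-ratio relations) and then verify, by direct quaternion computation, that a specific assignment of unit quaternions of real part $\tfrac12$ honors all of them at once. If no single $\psi$ exists because the relations cannot be met simultaneously, the fallback is to prove Conjecture \ref{con:2uni} by an inductive construction: reduce to 3-connected matroids using the generalized parallel connection (Conjecture \ref{con:genparcon}) as indicated above, and build $\QU$-representations matroid by matroid, keeping the construction algorithmic so that the counting pipeline still applies. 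Either way, the representation-theoretic step is the crux; the counting step is essentially free once it is done.
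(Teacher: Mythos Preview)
The statement you are attempting to prove is a \emph{conjecture} in the paper, not a theorem; the paper offers no proof of it. In fact the paper says exactly what you say: immediately before stating this conjecture it remarks that ``a sufficiently constructive positive answer'' to Conjecture~\ref{con:2uni} would settle it. So your approach is not different from the paper's---it \emph{is} the paper's suggested approach, and the paper leaves it open.

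Consequently your proposal is not a proof but a reduction, and you are candid about this: you write that ``the main obstacle is the constructive existence of $\psi$, that is, the relation-checking at the heart of Conjecture~\ref{con:2uni}.'' That obstacle is the whole content of the problem. Everything downstream of a homomorphism $\psi:\uniform_2\to\QU$ is indeed routine (Theorem~\ref{thm:chainhom} transports the representation, Corollary~\ref{cor:basiscount} counts the bases, and the determinant in the definition of $\delta$ is polynomial-time computable provided the image quaternions have algebraic coordinates of bounded height). But you have not supplied $\psi$, nor shown that the defining relations of $\uniform_2$ can be satisfied simultaneously in $S\quat$; your fallback plan invokes yet another open statement, Conjecture~\ref{con:genparcon}. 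Until one of these is actually established, there is no proof here---only the same outline the paper already gives.

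One minor technical caution for when you do attempt the construction: for the determinant computation to run in polynomial time you need the images of the generators of $\uniform_2$ to lie in a fixed number field (so that entries of $z_2(\phi(AA^\dag))$ are algebraic numbers of bounded degree with polynomially bounded bit-size). Generic unit quaternions with real part $\tfrac12$ will not have rational coordinates, so you should aim for, say, quaternions with coordinates in a fixed real quadratic extension of $\Q$.
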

The input to such an algorithm would be a representation over the 2-uniform partial field.

A generalization of the Lift Theorem from \cite{PZ08lift}, applied to the skew partial field $\QU\times \uniform_2$, might help with the resolution of Conjecture \ref{con:2uni}. Tutte's Homotopy Theorem could be a useful tool for this. 

David G. Wagner conjectured the following. Unfortunately our definition of $\delta$ prevents a straightforward adaptation of the corresponding statement for SRU matroids\cite{COSW04}.
\begin{conjecture}
	A QU matroid has the Half-Plane Property.
\end{conjecture}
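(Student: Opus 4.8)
The plan is to follow the template of Choe, Oxley, Sokal, and Wagner~\cite{COSW04} for regular and $\sru$-matroids. There one writes the basis generating polynomial of $M$ as a determinant: if $A$ is a representation over $\mathbb{C}$ all of whose maximal minors have absolute value $0$ or $1$, and $D_x := \mathrm{diag}(x_e : e\in E)$, then $\det(A D_x A^\dag) = \sum_{B}\prod_{e\in B}x_e$, and this is nonzero whenever every $\Re(x_e)>0$ because its Hermitian part $A\,\mathrm{diag}(\Re x_e)\,A^\dag$ is then positive definite. So one would take a nondegenerate weak $\QU$-matrix $A$ with $M = M[A]$ and $r := \rank(M) = |X|$, embed each scalar $x_e = a_e + b_e\sqrt{-1}$ into $\quat$ via $\sqrt{-1}\mapsto i$, and try to rerun this argument with $\delta$ playing the role of the determinant.

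Two of the steps survive. First, $A D_x A^\dag$ is invertible over $\quat$ whenever $\Re(x_e)>0$ for all $e$: for $w\in\quat^X\setminus\{0\}$ put $u := A^\dag w$, which is nonzero because $A$ has full row rank, and then $\Re\!\big(w^\dag (A D_x A^\dag)\,w\big) = \Re\!\big(\sum_e \bar u_e x_e u_e\big) = \sum_e \Re(x_e)\,|u_e|^2 > 0$, using that $\bar u_e\,i\,u_e = |u_e|^2\,u_e^{-1}\,i\,u_e$ is a purely imaginary quaternion; hence $(A D_x A^\dag)\,w\ne 0$, so (as in Claim~\ref{cl:detonequat}) $\delta(A D_x A^\dag)\ne 0$. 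Second, for $x$ in the positive orthant $\delta(A D_x A^\dag) = P_M(x)$ by Theorem~\ref{thm:cauchybinet}, exactly as in Corollary~\ref{cor:basiscount}. What fails — and this is the obstruction noted above — is that $\delta$ is not holomorphic: for complex $x$ the quantity $\delta(A D_x A^\dag)$ mixes $x_e$ with $\bar x_e$ in a way that depends on the chosen $A$, not just on $M$, and it is in general \emph{not} equal to $|P_M(x)|$ (a rank-$2$ example over $(\quat, S\quat)$ with one genuinely quaternionic entry already shows $\delta(A D_x A^\dag)\ne|P_M(x)|$). Thus invertibility of $A D_x A^\dag$ does not by itself yield $P_M(x)\ne0$.

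The most promising way around this is to pass to a complex model. By Theorem~\ref{thm:multilinskew}, $M$ has a $2$-multilinear representation $V\colon E\to\grass(2,\mathbb{C}^{2r})$, with $V(e) := \colspan\!\big(z_2(\phi(A))[\,\cdot\,,\{2e-1,2e\}]\big)$ invariant under the quaternionic structure $\tau(v) := \hat{J}_r\,\bar v$ on $\mathbb{C}^{2r}$, where $\hat{J}_r = I_r\otimes\mm{0}{1}{-1}{0}$. The polynomial $R(y) := \det\!\big(\sum_{e\in E} y_e\,\Pi_{V(e)}\big)$, with $\Pi_{V(e)}$ the orthogonal projection onto $V(e)$, has nonnegative coefficients and — by the same positive-definite-Hermitian-part argument — has the Half-Plane Property in the variables $y_e$, and one would try to extract from it the Half-Plane Property of $P_M$. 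The naive hope $R(y) = P_M(y)^2$ is false: $\tau$-invariance of the subspaces $V(e)$ does not annihilate the ``mixed'' monomials of $R$ in which some $y_e$ has exponent $1$. One would therefore need a $\tau$-equivariant refinement of $R$ that does collapse onto $P_M^2$, or some other real-stable polynomial built from $V$ whose ``balanced'' part recovers $P_M$; this is where the real work lies.

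We expect the genuine obstacle to be the absence of any holomorphic ``quaternionic determinant'' — there is no multiplicative $\quat$-valued function on $M_r(\quat)$ restricting to the product on diagonal matrices — so, unlike regular and $\sru$-matroids, a QU matroid's basis polynomial need not admit a determinantal representation, and a proof must furnish a different certificate of real-stability. A natural alternative route is probabilistic: Theorem~\ref{thm:detailedbasiscount} realises the uniform random basis of a QU matroid as a ``quaternionic-determinantal'' (i.e.\ $\beta=4$) point process with kernel $P_C$, and Lemma~\ref{lem:projectionmatrixminor} describes its conditioning under deletion and contraction by pivoting; one would then try to show, by induction on $|E|$ along the lines of the proof of Theorem~\ref{thm:cauchybinet}, that the generating polynomial of this process is real stable (equivalently, that the measure is strongly Rayleigh), which is exactly the Half-Plane Property for $P_M$. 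Proving that the $\beta=4$ ensembles arising this way are strongly Rayleigh would, we believe, be the crux.
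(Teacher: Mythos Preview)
This statement is a \emph{conjecture} in the paper's Open Problems section; the paper offers no proof. The authors themselves flag exactly the obstruction you isolate: ``Unfortunately our definition of $\delta$ prevents a straightforward adaptation of the corresponding statement for SRU matroids.'' So there is nothing to compare your attempt against, and your write-up is not a proof either --- it is, explicitly, a research programme.

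To your credit, the diagnosis is accurate and matches the paper's. You correctly reproduce the COSW determinantal argument, correctly observe that invertibility of $A D_x A^\dag$ over $\quat$ survives (the real-part computation is fine), and correctly pinpoint where it breaks: $\delta$ involves an absolute value and so is not holomorphic in the $x_e$, hence $\delta(A D_x A^\dag)\neq 0$ on the open right half-plane says nothing about the zeros of $P_M$. Your observation that $R(y)=\det\big(\sum_e y_e\Pi_{V(e)}\big)$ is real stable but need not equal $P_M(y)^2$ is also correct and is exactly why the $2$-multilinear route does not close immediately.

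But none of the proposed ``ways around'' are carried through. The $\tau$-equivariant refinement you allude to is not constructed; the claim that the $\beta=4$ determinantal process with kernel $P_C$ is strongly Rayleigh is precisely equivalent to the conjecture, so invoking it is circular unless you supply an independent argument. In short: you have accurately described the gap the paper leaves open, but you have not filled it. If you intend this as a proof, it is not one; if you intend it as commentary on why the conjecture is hard, it is apt and in agreement with the authors.
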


In the proof of Theorem \ref{thm:cauchybinet}, we used that all nonzero entries of $AA^\dag$ are invertible, and hence restricted our attention to skew fields. If we can circumvent this step in the proof, it might be possible to settle the following generalization. We say a map $\delta$ from square matrices over a ring to $\R$ is \emph{determinant-like} if it satisfies the conditions of Lemma \ref{lem:delta}. 

\begin{conjecture}
	Let $\parf = (R,G)$ be a skew partial field, let $n, r, s$ be positive integers with $s \geq r$, define $X := \{1,\ldots,r\}$, $E := \{1,\ldots,s\}$, and let $A$ be an $X\times E$ weak $\parf$-matrix. If
	\begin{align}
		\delta: \matring(r, R) \rightarrow \R		
	\end{align}
	is a determinant-like map, then
	\begin{align}
		\delta(AA^\dag) = \sum_{B\subseteq E : |B| = r} \delta(A[X,B] A[X,B]^\dag).
	\end{align}
\end{conjecture}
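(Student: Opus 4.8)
To prove this, the plan is to follow the proof of Theorem~\ref{thm:cauchybinet} step by step, with the abstract determinant-like map in place of the quaternionic $\delta$, and then to isolate the one place where a skew-field hypothesis is genuinely used. So I would induct on $r+s$. The base cases $r=s$ (the right-hand side is the single term $B=E$) and $r=1$ should again be essentially immediate, the latter unwinding to the statement that $|\cdot|$ is additive on the elements $a\overline a$ -- a property that has to be built into, or derived from, the axioms governing $\delta$ and the involution $a\mapsto\overline a$. For the inductive step, left multiplication of $A$ by permutation and transvection matrices preserves $\delta$ by axioms (v)--(vi) of Lemma~\ref{lem:delta}, and changes neither $\delta(AA^\dag)$ nor any $\delta(A[X,B]A[X,B]^\dag)$ (one has $AA^\dag\mapsto F(AA^\dag)F^\dag$ with $\delta(F)=\delta(F^\dag)=1$, and selecting columns commutes with left multiplication), so one is free to perform row operations. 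Using them I would first arrange $A[X\setminus r,\,s]=0$, and then, by simultaneous row and column operations, make $Q:=AA^\dag$ diagonal. Writing $a:=A_{rs}$, axioms (i) and (iv) give $\delta(A[X,B]A[X,B]^\dag)=(a\overline a)\,\delta(A[X\setminus r,B\setminus s]A[X\setminus r,B\setminus s]^\dag)$ for every basis $B\ni s$, exactly as in Claim~\ref{cl:splitsum2}; splitting $\sum_{B}$ according to whether $s\in B$, applying the induction hypothesis to the $r\times(s-1)$ matrix $A[X,E\setminus s]$ and to the $(r-1)\times(s-1)$ matrix $A[X\setminus r,E\setminus s]$, and recombining via the relation $Q'_{rr}=Q_{rr}-a\overline a$ of Claim~\ref{cl:Qp} together with axiom (iv), closes the induction just as in the display~\eqref{eq:splitsum}--\eqref{eq:CBfinal}.

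Two points need care. The smaller one is that the statement presupposes an anti-involution $p\mapsto\overline p$ on $R$ with $\overline G=G$, together with a norm $p\mapsto|p|\in\R$ (plausibly $|p|:=\delta([p])$, with $p\overline p\in\R_{\ge 0}$) that is compatible with $\delta$; these should be adopted as hypotheses, and their interaction with axioms (iii)--(iv) must be pinned down before the argument above is even well posed. The larger point -- and, I suspect, the reason the statement is only conjectured -- is the diagonalization of $Q=AA^\dag$. Over a skew field the diagonal entries $Q_{ii}=\sum_j A_{ij}\overline{A_{ij}}$ are invertible, being nonzero sums of norms, so $Q$ is congruent to a diagonal matrix and the column-development axiom applies cleanly; over a general skew partial field $Q_{ii}$ may be a zero divisor, $Q$ need not be congruent to any diagonal matrix, and that step of the proof breaks down. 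A preliminary obstacle -- that column $s$ of a merely weak $\parf$-matrix need not contain a unit to pivot on -- is by contrast easy to dispose of: one may first pass to a strong $\parf$-matrix by Corollary~\ref{cor:weakskewstrong}, invoke Lemma~\ref{lem:skewpivot}, and delete loops (a loop $e$ makes a row and column of $AA^\dag$ vanish, contributes $0$ to both sides, and may be removed).

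To try to bypass the diagonalization I would run the recursion purely on $A$, never forming $Q$ explicitly. Choosing a unit entry $A_{re}$ with $e$ \emph{not} among the columns to be cleared and applying Corollary~\ref{cor:pivot}, $\delta(A)=|A_{re}|\,\delta(A^{re}[X\setminus r,\,E\setminus e])$, one peels off one row at a time and turns the claimed identity for $A$ into the one for $A^{re}[X\setminus r,E\setminus e]$, whose row span is $\rowspan(A)\contract e$ by the computation behind Lemma~\ref{lem:projectionmatrixminor}. This uses only invertibility of single entries of $A$, which is automatic for strong $\parf$-matrices, and it keeps the whole argument inside the chain-group and minor machinery already developed. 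What remains genuinely hard is the case in which, after every admissible reduction, $AA^\dag$ still carries a zero divisor on its diagonal; there $\delta(AA^\dag)$ can behave in ways the determinant-like axioms do not control. I expect that overcoming this requires either an extra structural hypothesis on $\delta$ -- for instance that $\delta$ factors through a partial-field homomorphism $\pf\to\pf'$ with $\pf'$ a skew field, in the spirit of the proof of Proposition~\ref{prop:pfmatroid} -- or a direct combinatorial reading of $\delta(AA^\dag)$ as a nonnegatively weighted count of the bases of $M[A]$, from which the recursion would be manifest.
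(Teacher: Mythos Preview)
The statement you were asked to prove is a \emph{Conjecture} in the paper, not a theorem; the paper offers no proof to compare against. Immediately before stating it, the authors write that in the proof of Theorem~\ref{thm:cauchybinet} they ``used that all nonzero entries of $AA^\dag$ are invertible, and hence restricted our attention to skew fields,'' and that circumventing this step ``might'' settle the generalization. Your proposal is therefore not a proof but an analysis of the obstruction, and on that level it is accurate and aligned with the paper: you correctly isolate the diagonalization of $Q=AA^\dag$ as the step that fails over a general skew partial field, because the diagonal entries $Q_{ii}=\sum_j A_{ij}\overline{A_{ij}}$ may be zero divisors.

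Two remarks. First, your observation that the statement tacitly presupposes an anti-involution $p\mapsto\overline p$ on $R$ and a compatible norm is well taken; the paper leaves these implicit, and any serious attack on the conjecture would have to make them explicit hypotheses. Second, your claim that the base case $r=1$ is ``essentially immediate'' deserves scrutiny: for $r=1$ the identity reads $\delta\bigl(\bigl[\sum_e a_e\overline{a_e}\bigr]\bigr)=\sum_e \delta\bigl([a_e\overline{a_e}]\bigr)$, which is an \emph{additivity} statement about $|\cdot|$ on the elements $a\overline a$. Over $\quat$ this holds because each $a\overline a$ is a nonnegative real and $|x|=x$ there, but nothing in the six axioms of Lemma~\ref{lem:delta} forces it in general. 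So even the base case already requires a hypothesis beyond ``determinant-like,'' which reinforces your point that the axiom list is incomplete as stated. Your proposed workaround of running the recursion on $A$ via Corollary~\ref{cor:pivot} does not obviously avoid the problem, since the quantity $\delta(AA^\dag)$ on the left-hand side must still be evaluated, and that is precisely where the non-invertible diagonal entry bites.
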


A specific class of partial fields satisfying the premise would be $\textit{PU}_n := (\matring(n,\C), G)$, where
\begin{align}
	G := \{D \in \GL(n,\C) : |\det(D)| = 1\}.
\end{align}
Conjugation in this ring would be replaced by taking the conjugate transpose. The determinant-like function could then be defined by
\begin{align}
	\delta(A) = \sqrt[n]{|\det(z_n(A))|}.
\end{align}
Perhaps additional requirements on the group $G$ are required. It is likely, but not immediately obvious, that the class of $\textit{PU}_n$-representable matroids is strictly bigger than the class of QU matroids.

\subsection*{Acknowledgements.} We thank Hendrik Lenstra for asking some insightful questions regarding the material as presented in the second author's PhD thesis. We thank Lee Dickey for showing us the nice coordinatization of the non-Pappus matroid from Equation \eqref{eq:nonpaprep}, Monique Laurent for discussing Br\"and\'en's work in a seminar (which led us to the observations from Section \ref{sec:multilin}), and Relinde Jurrius for an eye-opening conversation about duality in linear codes. Finally, we thank David G. Wagner for some stimulating conversations about quaternionic unimodular matroids. 

\bibliography{matbib2009}
\bibliographystyle{plain}

\end{document}